\g@addto@macro{\endabstract}{\@setabstract}
\newcommand{\authorfootnotes}{\renewcommand\thefootnote{\@fnsymbol\c@footnote}}%
\theoremstyle{plain}
\newtheorem{theorem}{Theorem}
\newtheorem{thm}{Theorem}[subsection]
\newtheorem{lem}[thm]{Lemma}
\newtheorem{prop}[thm]{Proposition}
\newtheorem{cor}[thm]{Corollary}
\newcommand\scalemath[2]{\scalebox{#1}{\mbox{\ensuremath{\displaystyle #2}}}}
\theoremstyle{plain}
\newtheorem{defn}[thm]{Definition}
\theoremstyle{remark}
\newtheorem{rem}[thm]{Remark}
\newcommand{\mycomment}[1]{}
\renewcommand{\tocsection}[3]{%
  \indentlabel{\@ifnotempty{#2}{\bfseries\ignorespaces#1 #2\quad}}\bfseries#3}
\renewcommand{\tocsubsection}[3]{%
  \indentlabel{\@ifnotempty{#2}{\ignorespaces#1 #2\quad}}#3}
\renewcommand{\tocsubsubsection}[3]{%
  \indentlabel{\@ifnotempty{#2}{\ignorespaces#1 #2\quad}}#3}
\newcommand\@dotsep{4.5}
\def\@tocline#1#2#3#4#5#6#7{\relax
  \ifnum #1>\c@tocdepth 
  \else
    \par \addpenalty\@secpenalty\addvspace{#2}%
    \begingroup \hyphenpenalty\@M
    \@ifempty{#4}{%
      \@tempdima\csname r@tocindent\number#1\endcsname\relax
    }{%
      \@tempdima#4\relax
    }%
    \parindent\z@ \leftskip#3\relax \advance\leftskip\@tempdima\relax
    \rightskip\@pnumwidth plus1em \parfillskip-\@pnumwidth
    #5\leavevmode\hskip-\@tempdima{#6}\nobreak
    \leaders\hbox{$\m@th\mkern \@dotsep mu\hbox{.}\mkern \@dotsep mu$}\hfill
    \nobreak
    \hbox to\@pnumwidth{\@tocpagenum{\ifnum#1=1\bfseries\fi#7}}\par
    \nobreak
    \endgroup
  \fi}
\renewcommand\csname r@tocindent0\endcsname{0pt}
\def\l@subsection{\@tocline{2}{0pt}{2.5pc}{5pc}{}}
\def\l@subsubsection{\@tocline{2}{0pt}{4pc}{5pc}{}}
\DeclareMathOperator{\GL}{GL}
\DeclareMathOperator{\ch}{ch}
\DeclareMathOperator{\vol}{vol}
\newcommand\blfootnote[1]{%
  \begingroup
  \renewcommand\thefootnote{}\footnote{#1}%
  \addtocounter{footnote}{-1}%
  \endgroup
}
\date{} 
\begin{document}
\hypersetup{citecolor=blue}
\hypersetup{linkcolor=red}

\begin{center}
  \LARGE 
 ON INTEGRAL ASPECTS OF ASAI PERIODS AND \\
 EULER SYSTEMS FOR $\mathrm{Res}_{E/\mathbf{Q}}\GL_2$.
 
  \normalsize
  \bigskip
  Alexandros Groutides \par 
  
 Mathematics Institute, University of Warwick\par
\end{center}

\begin{abstract}
    Let $E/\mathbf{Q}$ be a totally real quadratic field. Using unramified harmonic analysis in Hecke modules, we study the $\ell$-adic integral behavior of the (unramified part of the) Asai period attached to a Hilbert modular form for $E$, when evaluated on arbitrary integral test data in the sense of \cite{loeffler2021zetaintegralsunramifiedrepresentationsgsp4}. Using the same representation-theoretic framework, we also prove the conjectured integral behavior of local factors appearing in tame norm relations, between any collection of integral motivic Asai-Flach classes in the recipe of \cite{Loeffler_2021}. Finally, specializing to one such specific integral collection, we obtain the most general version of the Asai-Flach Euler system tame norm relations, extending a result of \cite{grossi2020norm}. 
\end{abstract}
\blfootnote{We gratefully acknowledge support from the following research grant: ERC Grant No. 101001051—Shimura varieties and the Birch–Swinnerton-Dyer conjecture.}
\vspace{-2em}
\section{Introduction}
\subsection{Main results}

This paper is devoted to the study of Asai periods associated to Hilbert modular forms for real quadratic fields. These play an important role in the study of $L$-functions of Hilbert modular forms, and the geometry and arithmetic of Hilbert modular surfaces; in particular they are fundamental input for the Euler system constructed \textcolor{Black}{in} \cite{lei2018euler}. Our goal is to establish integrality properties of these periods, when the input data lies in suitable \textcolor{Black}{lattices}.

 Throughout this introduction, we let $E/\mathbf{Q}$ be a totally real quadratic field with discriminant $\Delta_E$. We consider the natural embedding of algebraic groups $G:=\GL_2 \hookrightarrow\mathscr{G}:=\mathrm{Res}_{E/\mathbf{Q}}G$ and we write $\vol_{G(\mathbf{Q}_p)}$ for the Haar measure on $G(\mathbf{Q}_p)$ giving $G(\mathbf{Z}_p)$ volume $1$. Given a Hilbert cuspidal eigenform $\mathbf{f}$ for $E$, there is an associated automorphic representation $\Pi_\mathbf{f}$ of $\mathscr{G}(\mathbf{A})$ with central character denoted by $\omega_{\Pi_\mathbf{f}}$. This representation is unramified outside a large enough finite set of primes $S$ and its $S$-finite part $\Pi_\mathbf{f}^S$ contains a unique normalized spherical vector $W_{\Pi_\mathbf{f}^S}^\mathrm{sph}$. The unramified Asai period associated to $\mathbf{f}$ is the unique normalized non-zero linear form
\begin{align}\label{eq: intro 1}\mathcal{Z}_\mathbf{f}\in\mathrm{Hom}_{G(\mathbf{A}^S)}\left(\mathcal{S}((\mathbf{A}^S)^2)\otimes \Pi_\mathbf{f}^S,\mathbf{1}\right)\end{align}
in this one-dimensional Hom-space, where $\mathcal{S}((\mathbf{A}^S)^2)$ is a space of Schwartz functions. This period can be realized using the local zeta integrals and $L$-functions, of \cite{jacquet1983rankin} and \cite{flicker1988twisted}, together with work of \cite{matringe2008distinguishedrepresentationsexceptionalpoles} for the inert case. 

The $\ell$-adic integral behavior of $p$-adic integrals of this form, and their associated periods, have only been studied in specific cases of nice explicit input data (apart from our earlier work in \cite{groutides2024rankinselbergintegralstructureseuler} on which we build on here). This means that one can explicitly work with these periods, unfold the integrals involved using Godement-Siegel sections and obtain meaningful explicit results at an $\ell$-adic integral level. For example, results related to this, have appeared in \cite[\S $3$]{loeffler2021euler} , \cite[\S $4.3$]{grossi2020norm} , \cite[\S $6$]{Loeffler_2021} and \cite[\S $6.4$]{loeffler2021zetaintegralsunramifiedrepresentationsgsp4} .
Upon closer inspection, and with some translation, all such instances of $\ell$-adic integral behavior, fall under the more general framework of \textit{integral data} of \cite[\S $6.4$]{loeffler2021zetaintegralsunramifiedrepresentationsgsp4}. The major difficulty arises when one passes to \textit{general} such integral data in the sense of \textit{op.cit}. In this case, it becomes futile to try to attack matters of integrality by directly computing or unfolding the $p$-adic integrals involved in the construction. In \cite{groutides2024rankinselbergintegralstructureseuler} we develop a strategy to bypass this issue, obtaining general results for the corresponding period associated to the convolution of two modular forms \cite[Theorem B]{groutides2024rankinselbergintegralstructureseuler}, phrased in a slightly different language. In our first main theorem below, we adapt this approach and extend it to the Asai case.

\begin{theorem}[\Cref{thm hilbert period}]\label{thm intro A}
  Let $\mathbf{f}$ be a normalized, paritious, Hilbert cuspidal eigenform for $E$, of level $\mathfrak{n}$ and weight $(\underline{k},\underline{t})\in\mathbf{Z}_{\geq 2}^2\times\mathbf{Z}_{\geq 0}^2$. Let $L_\mathbf{f}$ be the number field of $\mathbf{f}$. Fix a prime $\ell\nmid\mathrm{Nm}_{E/\mathbf{Q}}(\mathfrak{n})$, a place $v$ of $L_\mathbf{f}$ above $\ell$, and set $\mathbf{L}_\mathbf{f}:=(L_\mathbf{f})_v$. Let $S$ be a finite set of places containing $\{\infty,p|2\ell\Delta_E\mathrm{Nm}_{E/\mathbf{Q}}(\mathfrak{n})\}$. Then the following are true:
\begin{enumerate}
    \item For any $g\in\mathscr{G}(\mathbf{A}^S)$ and any decomposable Schwartz function $\Phi=\otimes_{p\notin S} \Phi_p\in \mathcal{S}((\mathbf{A}^S)^2)$ where each $\Phi_p$ is valued in $\vol_{G(\mathbf{Q}_p)}(\mathrm{Stab}_{G(\mathbf{Q}_p)}(\Phi_p)\cap g_p\mathscr{G}(\mathbf{Z}_p)g_p^{-1})^{-1}\cdot\mathcal{O}_{L_\mathbf{f}}$, the period $\mathcal{Z}_\mathbf{f}$ satisfies 
    $$\mathcal{Z}_\mathbf{f}(\Phi\otimes gW_{\Pi_\mathbf{f}^S}^\mathrm{sph})\in\mathcal{O}_{\mathbf{L}_\mathbf{f}}.$$
    \item Suppose, moreover, that $S_0$ denotes a finite set of primes disjoint from $S$, and for each $p\in S_0$, $\Phi_p$ is valued in $\vol_{G(\mathbf{Q}_p)}(\mathrm{Stab}_{G(\mathbf{Q}_p)}(\Phi_p)\cap g_p\mathscr{G}(\mathbf{Z}_p)[p]g_p^{-1})^{-1}\cdot\mathcal{O}_{L_\mathbf{f}}$. Then the period $\mathcal{Z}_\mathbf{f}$ also satisfies
    \begin{align}\label{eq: A}\mathcal{Z}_\mathbf{f}\left(\Phi\otimes gW_{\Pi_\mathbf{f}^S}^\mathrm{sph}\right)\cdot \left(\prod_{\substack{p\in S_0\\ \Phi_p(0,0)\neq 0}} L_p(\omega_{\Pi_\mathbf{f}},0)^{-1}\right)\in\prod_{p\in S_0}\left\langle p-1, L_p^\mathrm{As}(\Pi_{\mathbf{f}},0)^{-1}\right\rangle\subseteq \mathcal{O}_{\mathbf{L}_\mathbf{f}}.\end{align}
\end{enumerate}
\end{theorem}

\textcolor{Black}{If the set $S_0\cap\{p|\Phi_p(0,0)\neq 0\}$ is empty, or if the narrow class number of $E$ and the Nebentype of $\mathbf{f}$ satisfy certain conditions found in \Cref{thm hilbert period}, the containment in \eqref{eq: A} also holds without the bracketed product of Tate $L$-factors on the left. For the definition of the open compact determinant level subgroup $\mathscr{G}(\mathbf{Z}_p)[p]\subseteq\mathscr{G}(\mathbf{Z}_p)$ and more details regarding conventions, normalizations and notation used in \Cref{thm intro A}, we refer the reader to \Cref{sec periods attached to HMF}.} The condition in the first part of the theorem can also be stated as: For every $p\notin S$ the tuple $(\Phi_p,g_p)$ is $\mathcal{O}_{L_\mathbf{f}}$-integral of level $\mathscr{G}(\mathbf{Z}_p)$, in the sense of \cite[ \S $6.4$]{loeffler2021zetaintegralsunramifiedrepresentationsgsp4}. Similarly, the condition on the values of $\Phi_p$ for $p\in S_0$, in the second part, can be stated as: For every $p\in S_0$ the tuple $(\Phi_p,g_p)$ is $\mathcal{O}_{L_\mathbf{f}}$-integral of level $\mathscr{G}(\mathbf{Z}_p)[p]$, in the sense of \textit{loc.cit}. Of course for primes $p\in S_0$, there's no clash between these two conditions since the latter implies the former. The unramified tuples $(\ch(\mathbf{Z}_p^2),g_p\in\mathscr{G}(\mathbf{Z}_p))$, which occur for almost all $p\notin S$, are by construction $\mathcal{O}_{L_\mathbf{f}}$-integral of level $\mathscr{G}(\mathbf{Z}_p)$, but are not $\mathcal{O}_{L_\mathbf{f}}$-integral of level $\mathscr{G}(\mathbf{Z}_p)[p]$.

\textcolor{Black}{Our second main result concerns the integral behavior of the Asai-Flach tame norm relations. Once again by adapting and extending our earlier work in \cite{groutides2024rankinselbergintegralstructureseuler} we show that the motivic Asai-Flach Euler system tame norm relations obtained using the approach of \cite{Loeffler_2021}}, are optimal in the strongest possible integral sense; i.e., any construction of this type, with any choice of integral input data, does not only yield motivic classes which lie in the integral \'etale realization, but also local factors which are always integrally divisible by the Asai Euler factor $\mathcal{P}_{p,\mathrm{As}^*}^{'}(\mathrm{Frob}_p^{-1})$, modulo $\scalemath{0.9}{p-1}$, settling a conjecture of Loeffler in this setting. \textcolor{Black}{We expect that similar integral optimality results can be established for norm relations associated to groups that enjoy a similar setup. For example 
 the $(\GL_2\times_{\GL_1}\GL_2,\mathrm{GSp}_4)$-setup of \cite{loeffler2021euler}, the $(\GL_2\times_{\GL_1}\GL_2,\mathrm{GSp}_4\times_{\GL_1} \GL_2)$ setup of \cite{hsu2020eulersystemsmathrmgsp4times} and of course the $(H,\mathrm{GU}(2,1))$-setup of \cite{Loeffler_2021}.}
 
 To obtain Asai-Flach classes over cyclotomic fields, we actually need to pass from $\mathscr{G}$ to a slightly smaller group $\mathscr{G}^*:=\mathscr{G}\times_D\GL_1$ where $D:=\mathrm{Res}_{E/\mathbf{Q}}\GL_1$.  The integral structures with determinant level introduced in \cite[\S $3$]{Loeffler_2021}  can be briefly described locally for the group $\mathscr{G}^*$, by lattices $$\scalemath{0.95}{\mathcal{I}_0(\mathscr{G}^*(\mathbf{Q}_p)/\mathscr{G}^*(\mathbf{Z}_p)[p],\mathbf{Z}[1/p]) \subseteq \left(\mathcal{S}_0(\mathbf{Q}_p^2)\otimes C_c^\infty(\mathscr{G}^*(\mathbf{Q}_p)/\mathscr{G}^*(\mathbf{Z}_p)[p])\right)_{\GL_2(\mathbf{Q}_p)}}$$given by 
$$\scalemath{0.9}{\mathrm{span}_{\mathbf{Z}[1/p]}\left\{ \phi\otimes \ch(g \mathscr{G}^*(\mathbf{Z}_p)[p])\ |\ g\in \mathscr{G}^*(\mathbf{Q}_p),\ \phi\ \mathrm{valued}\ \mathrm{in}\ \vol_{G(\mathbf{Q}_p)}\left(\mathrm{Stab}_{G(\mathbf{Q}_p)}(\phi)\cap g \mathscr{G}^*(\mathbf{Z}_p)[p]g^{-1}\right)^{-1}\cdot \mathbf{Z}[1/p] \right\}.}$$

\textcolor{Black}{For the definition of these spaces, we refer the reader to \Cref{sec general notation} and the beginning of \Cref{sec setup}.} Motivic cohomology classes attached to data in these local lattices are the ones we will be interested in. Let $\underline{\delta}^*:=(\delta_p^*)_{p\notin S}$ a family of local integral elements with determinant level. Upon properly truncating this family at each square-free $n\in\mathbf{Z}_{\geq 1}$ coprime to $S$, we can apply the recipe of \cite[\S $9$]{Loeffler_2021}  to obtain a collection of \textit{integral} classes (see \Cref{Prop integral classes}) 
 $\{ _c\mathrm{AF}_{\mathrm{mot},n}(\underline{\delta}^*)\}_{(n,S)=1}$, where for each such integer $n$, the class $ _c\mathrm{AF}_{\mathrm{mot},n}(\underline{\delta}^*)$ depends on $\delta_p^*$ for $p|n$. \textcolor{Black}{For the precise definition of these classes, we refer the reader to \eqref{AF cycl classes} and \Cref{def classes}} As mentioned above, but with some more notation, our second main theorem studies the integral behavior of local factors coming from any such collection of integral classes $\{ _c\mathrm{AF}_{\mathrm{mot},n}(\underline{\delta}^*)\}_{(n,S)=1}$, attached to any family of local integral data $\underline{\delta}^*$. This can be formulated for non-trivial coefficient sheaves, and upon doing so, our proof goes through in the same way.

\begin{theorem}[\Cref{thm euler system}]\label{thm intro B}
     Let $S$ be a finite set of primes containing $\{2,p|\Delta_E\}$, and $c\in\mathbf{Z}_{>1}$ coprime to $6S$. Let $Y_{\mathscr{G}^*}$ be the infinite-level Shimura variety of $\mathscr{G}^*$, and $\mathscr{H}_S$ an open compact level subgroup at $S$. Let $\mathscr{Z}_{cS}$ be the set of square-free positive integers coprime to $cS$, and let $\underline{\delta}^*=(\delta_p^*)_{p\notin S}$ be any collection of integral data with determinant level, $\delta_p^*\in\mathcal{I}_0(\mathscr{G}^*(\mathbf{Q}_p)/\mathscr{G}^*(\mathbf{Z}_p)[p],\mathbf{Z}[1/p])$. Then the integral motivic Asai-Flach classes 
    $$_c\mathrm{AF}_{\mathrm{mot},n}(\underline{\delta}^*)\in H_{\mathrm{mot}}^3(Y_{\mathscr{G}^*}(\mathscr{H}_S)\times_\mathbf{Q} \mathbf{Q}(\mu_n),\mathbf{Q}(2)),\ \ \ n\in\mathscr{Z}_{cS}$$
    where $_c\mathrm{AF}_{\mathrm{mot},n}(\underline{\delta}^*)$ depends on $\delta_p^*$ for $p|n$,
        satisfy the following norm relations:
        \begin{enumerate}
            \item For $n,m\in\mathscr{Z}_{cS}$ with $\tfrac{m}{n}=p$ prime, we have 
            $$\mathrm{norm}^{\mathbf{Q}(\mu_m)}_{\mathbf{Q}(\mu_n)}\left( _c\mathrm{AF}_{\mathrm{mot},m}(\underline{\delta}^*)\right)=\mathcal{P}_{\mathrm{Tr}(\delta_p^*)}^\mathrm{cycl}\cdot\ _c\mathrm{AF}_{\mathrm{mot},n}(\underline{\delta}^*)$$
            with $$\mathcal{P}_{\mathrm{Tr}(\delta_p^*)}^\mathrm{cycl}\in \left\langle p-1,\mathcal{P}_{p,\mathrm{As}_*}^{'}(\mathrm{Frob}_p^{-1})\right\rangle\subseteq \mathcal{H}_{\mathscr{G}^*(\mathbf{Q}_p)}^\circ(\mathbf{Z}[1/p])[\mathrm{Gal}(\mathbf{Q}(\mu_n)/\mathbf{Q})].$$
            \item If we specialize the integral collection $\underline{\delta}^*$ to $\underline{\delta}_1^*$
            then for all $n,m\in\mathscr{Z}_{cS}$ with $\tfrac{m}{n}=p$ prime, we have $$\mathrm{norm}^{\mathbf{Q}(\mu_m)}_{\mathbf{Q}(\mu_n)}\left( _c\mathrm{AF}_{\mathrm{mot},m}(\underline{\delta}_1^*)\right)=\mathcal{P}_{p,\mathrm{As}_*}^{'}(\mathrm{Frob}_p^{-1})\cdot\ _c\mathrm{AF}_{\mathrm{mot},n}(\underline{\delta}_1^*).$$
        \end{enumerate}
        \textcolor{Black}{Here $\mathcal{H}_{\mathscr{G}^*(\mathbf{Q}_p)}^\circ(\mathbf{Z}[1/p])$ denotes the spherical Hecke algebra of $\mathscr{G}^*(\mathbf{Q}_p)$ with $\mathbf{Z}[1/p]$-coefficients.} The local factor $\mathcal{P}^\mathrm{cycl}_{\mathrm{Tr}(\delta_p^*)}$ is canonical, $\mathrm{Frob}_p$ denotes arithmetic Frobenius at $p$ as an element of $\mathrm{Gal}(\mathbf{Q}(\mu_n)/\mathbf{Q})$, and $\mathcal{P}_{p,\mathrm{As}_*}^{'}(X)\in\mathcal{H}_{\mathscr{G}^*(\mathbf{Q}_p)}^\circ(\mathbf{Z}[1/p])[X]$ acts as an Euler factor upon passing to Galois cohomology. 
\end{theorem}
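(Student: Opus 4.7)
My plan is to translate the global norm relation into an integrality statement about a canonical local trace map in Hecke modules, and then to control this trace via the same harmonic-analytic machinery that underlies the first main theorem. Following the formalism of [\cite{Loeffler_2021} \S 8-9] adapted to $\mathscr{G}^*$, the class $_c\mathrm{AF}_{\mathrm{mot},n}(\underline{\delta}^*)$ arises as the pushforward of a Siegel unit along the closed immersion of Shimura varieties induced by the diagonal $\GL_2 \hookrightarrow \mathscr{G}^*$, and depends linearly on $\delta_p^*$ at each $p \mid n$. Compatibility of motivic pushforward with the trace $\mathrm{norm}^{\mathbf{Q}(\mu_m)}_{\mathbf{Q}(\mu_n)}$ would then reduce the global relation at $p=m/n$ to computing the image of $\delta_p^*$ under an explicit $\GL_2(\mathbf{Q}_p)$-equivariant local trace
\[
\mathrm{Tr}_p \colon \mathcal{I}_0(\mathscr{G}^*(\mathbf{Q}_p)/\mathscr{G}^*(\mathbf{Z}_p)[p],\mathbf{Z}[1/p]) \longrightarrow \mathcal{H}_{\mathscr{G}^*(\mathbf{Q}_p)}^\circ(\mathbf{Z}[1/p])[\mathrm{Gal}(\mathbf{Q}(\mu_n)/\mathbf{Q})],
\]
with $\mathcal{P}_{\mathrm{Tr}(\delta_p^*)}^\mathrm{cycl}$ defined as $\mathrm{Tr}_p(\delta_p^*)$.

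For part (2), I would carry out a direct Satake-type computation on the canonical unramified generator $\ch(\mathbf{Z}_p^2) \otimes \ch(\mathscr{G}^*(\mathbf{Z}_p)[p])$ to which $\underline{\delta}_1^*$ specializes. Splitting into the three cases according to the decomposition of $p$ in $E$, the output should identify exactly with $\mathcal{P}_{p,\mathrm{As}_*}'(\mathrm{Frob}_p^{-1})$, mirroring the classical spherical Asai zeta integral calculations of \cite{flicker1988twisted} and \cite{matringe2008distinguishedrepresentationsexceptionalpoles}. For part (1), the key move is a reduction modulo $p-1$: the cyclotomic Galois action trivializes, and the containment $\mathrm{Tr}_p(\delta_p^*) \in \langle p-1, \mathcal{P}_{p,\mathrm{As}_*}'(\mathrm{Frob}_p^{-1})\rangle$ becomes the divisibility of $\mathrm{Tr}_p(\delta_p^*)$ by $\mathcal{P}_{p,\mathrm{As}_*}'(\mathrm{Frob}_p^{-1})$ inside the quotient $\mathcal{H}_{\mathscr{G}^*(\mathbf{Q}_p)}^\circ(\mathbf{Z}[1/p])/(p-1)$. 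I would establish this divisibility as the Hecke-module-theoretic shadow of the first main theorem: pairing $\mathrm{Tr}_p(\delta_p^*)$ against spherical Whittaker vectors with arbitrary Satake parameters reduces, through the explicit link between Hecke-module elements and Asai zeta integrals, to the integrality of the normalized Asai period evaluated on integral test data, which is precisely the content of \Cref{thm hilbert period}. Varying the Satake parameters then pins down the class of $\mathrm{Tr}_p(\delta_p^*)$ in the Hecke algebra and forces the claimed divisibility.

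The hard part will be the inert and ramified cases, where $\mathscr{G}^*(\mathbf{Q}_p)$ is not a product of two copies of $\GL_2(\mathbf{Q}_p)$ and the split-prime analysis of \cite{groutides2024rankinselbergintegralstructureseuler} cannot be imported verbatim. In these cases, I would work with the Iwasawa and Cartan decompositions of $\mathrm{Res}_{E_p/\mathbf{Q}_p}\GL_2$, classify the $\GL_2(\mathbf{Q}_p) \times \mathscr{G}^*(\mathbf{Z}_p)[p]$-double cosets on $\mathscr{G}^*(\mathbf{Q}_p)$ explicitly, and re-derive the spherical identities needed to transfer the integrality conclusion of the first main theorem into the desired divisibility at the level of Hecke modules. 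Once these local identities are in place, part (1) follows formally from the ideal characterization, and specializing to $\underline{\delta}_1^*$ isolates the unramified orbit and recovers the exact Euler factor of part (2).
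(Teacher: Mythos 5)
Your proposal has the right global outline — reduce to a local statement about the Hecke operator attached to $\mathrm{Tr}(\delta_p^*)$, and that much does match the paper — but there are two genuine gaps, one of which is fatal for part (2).

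\textbf{Part (2) is based on a wrong guess for $\underline{\delta}_1^*$.} You identify $\delta_{1,p}^*$ with the ``canonical unramified generator'' $\ch(\mathbf{Z}_p^2)\otimes\ch(\mathscr{G}^*(\mathbf{Z}_p)[p])$. This cannot be correct. First, $\ch(\mathbf{Z}_p^2)$ does not vanish at $(0,0)$, so it is not in $\mathcal{S}_0$; second, and more decisively, even as an element of $\mathcal{I}$ this vector is \emph{not} in the integral lattice at level $\mathscr{G}^*(\mathbf{Z}_p)[p]$, since $\vol_{G(\mathbf{Q}_p)}(G(\mathbf{Z}_p)\cap\mathscr{G}^*(\mathbf{Z}_p)[p])^{-1}=p-1$ forces the Schwartz function to take values in $(p-1)\cdot\mathbf{Z}[1/p]$, which $\ch(\mathbf{Z}_p^2)$ does not. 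Moreover, applying the trace to that vector just returns $(p-1)\cdot\ch(\mathbf{Z}_p^2)\otimes\ch(\mathscr{G}^*(\mathbf{Z}_p))$, so the resulting Hecke operator would be $(p-1)\cdot 1$, not the Euler factor. The element $\delta_{1,p}^*$ that actually yields $\mathcal{P}'_{p,\mathrm{As}_*}(1)$ is a nonobvious combination $\phi_{p,2}\otimes\bigl[\ch(\mathscr{G}^*(\mathbf{Z}_p)[p])-\ch(n\,\mathscr{G}^*(\mathbf{Z}_p)[p])\bigr]$ with $\phi_{p,2}$ a rescaled indicator supported away from the origin and $n$ a specific unipotent. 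Verifying that this works requires a genuine unfolding of the Asai zeta integral $Z(\phi_{p,2},W^{\mathrm{sph}}-n W^{\mathrm{sph}},s)$ (recognizing the Godement--Siegel section is supported on $K_0(p^2)$, decomposing into $K_{0,1}^1(p^2)$-cosets, and evaluating against $\psi_F(\alpha(\cdot))$). A ``direct Satake-type computation'' on your vector would not produce the Euler factor and would not close part (2).

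\textbf{Part (1) misses the transfer to $\mathscr{G}$.} You propose to redo the local harmonic analysis from scratch for $\mathscr{G}^*$ — classifying $G(\mathbf{Q}_p)\times\mathscr{G}^*(\mathbf{Z}_p)[p]$-double cosets on $\mathscr{G}^*(\mathbf{Q}_p)$ and re-deriving the spherical identities. The paper avoids all of this: it builds $\mathcal{H}^\circ_{\mathscr{G}^*(\mathbf{Q}_p)}\hookrightarrow\mathcal{H}^\circ_{\mathscr{G}(\mathbf{Q}_p)}$ and compatible module maps $i_p$, $i_p[p]$ that commute with the traces, so the integrality of $\mathcal{P}_{\mathrm{Tr}(\delta_p^*)}$ is read off from the already-proved $\mathscr{G}$-level result (\Cref{thm integral local factors} inert; the $\GL_2\times\GL_2$ analogue at split primes). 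For inert $p$ the transfer is clean because $\iota_p$ is surjective; for split $p$ there is a further non-trivial argument — using $2r_1+m=0$ for elements of $\mathscr{G}^*(\mathbf{Q}_p)$ and an explicit coset counting — showing the relevant Hecke operators land in the image of $\iota_p$. Your plan says nothing about this image problem, which is the actual subtlety in the split case. Redoing the $\mathscr{G}^*$ analysis directly is not in principle impossible, but it is a much heavier route and you would still have to handle precisely the phenomena the $\iota_p$-transfer packages. Finally, a small point: you mention ``the inert and ramified cases'' as the hard part, but ramified primes are in $S$ and never appear here — only split and inert at $p\notin S$ occur.

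Your suggestion to derive the local divisibility from \Cref{thm hilbert period} also inverts the logical flow: that global period theorem is itself proved from the local Hecke-module integrality results (\Cref{prop integral period 1} / \Cref{thm integral local factors}), so routing through it gains nothing and risks circularity. Both theorems rest on the same local foundation, and it is the local statements that must be established first.
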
 

The first instance of such a collection of integral Asai-Flach classes, satisfying motivic norm-relations, was established in \cite[Theorem $3.5.3$]{lei2018euler}. The approach used to establish norm-relations in \textit{op.cit}, differs from the one presented here, however the local factors appearing in \textit{loc.cit} are still in line (after some translating) with the integral ideal in \Cref{thm intro B}$(1)$. Additionally, in \textit{op.cit}, the authors obtain their motivic norm-relations for all inert primes, and for split primes which satisfy the so called ``narrowly principle'' condition. Here we obtain integral motivic norm relations for all unramified primes.

We should also mention the relation with work of Grossi in \cite{grossi2020norm}. In \textit{op.cit} the author constructs a specific collection of classes in motivic cohomology, which coincides (after adjusting for integrality which is not treated directly in \textit{op.cit}) with the classes $ _c\mathrm{AF}_{\mathrm{mot},n}(\underline{\delta}_1^*)$ constructed here. The author then also obtains Euler system tame norm relations for all unramified primes, with the expected Euler factor, but only after passing to Galois cohomology. \Cref{thm intro B}$(2)$ can thus be regarded as an extension of this. Indeed (assuming parallel weight $((2,2),(0,0))$ to ease notation and exposition), upon passing to Galois cohomology of \textcolor{Black}{the dual Asai representation} $\rho_\mathbf{f}^*$, the action of $\mathcal{P}_{p,\mathrm{As}_*}^{'}(\mathrm{Frob}_p^{-1})$ is intertwined with the action of the Artin Euler factor $P_p(\rho_\mathbf{f}^\mathrm{As}(1),\mathrm{Frob}_p^{-1})$, recovering \cite[Theorem $7.3.2$]{grossi2020norm}. 

\subsection{Acknowledgments}
I would like to thank my PhD supervisor David Loeffler for his constant guidance and support. Additionally, I would also like to thank Nadir Matringe for a useful discussion. Finally, I thank the anonymous referee for their careful reading of the manuscript.

{
  \hypersetup{linkcolor=black}
  \tableofcontents
}

\section{Some general notation}\label{sec general notation}
Throughout, we write $G:=\GL_2$ and $\vol_{G(\mathbf{Q}_p)}$ for the $\mathbf{Q}$-valued Haar measure on $G(\mathbf{Q}_p)$, giving $G(\mathbf{Z}_p)$ volume $1$. Likewise, we always write $\mathcal{S}(\mathbf{Q}_p^2)$, respectively $\mathcal{S}_0(\mathbf{Q}_p^2)$, for the space of Schwartz functions on $\mathbf{Q}_p^2$, respectively Schwartz functions on $\mathbf{Q}_p^2$ vanishing at $(0,0)$. We write $\mathcal{S}_{(0)}$ to denote either $\mathcal{S}$ or $\mathcal{S}_0$.

We briefly introduce some general local notation, in order to avoid repetition when passing from one group to another.
Let $\Gamma$ be an algebraic group, regarded over $\mathbf{Q}_p$, admitting $G$ as a subgroup via a fixed choice of embedding. Let $k$ be a finite extension of $\mathbf{Q}_p$ (possibly $k=\mathbf{Q}_p)$, and $K\subseteq\Gamma(k)$ be an open compact subgroup. \textcolor{Black}{We assume that $\Gamma(k)$ is unimodular since this will be the case for all $\Gamma$ considered in this paper. We write $C_c^\infty(\Gamma(k)/K)$ for the space of (smooth) compactly supported complex-valued functions on $\Gamma(k)$ that are right $K$-invariant.} As in \cite{Loeffler_2021} and \cite{groutides2024rankinselbergintegralstructureseuler}, we once again consider the spaces 
$$\mathcal{I}_{(0)}(\Gamma(k)/K):=\left(\mathcal{S}_{(0)}(\mathbf{Q}_p^2)\otimes C_c^\infty(\Gamma(k)/K)\right)_{G(\mathbf{Q}_p)}.$$
The subscript $G(\mathbf{Q}_p)$ denotes coinvariants, where the $G(\mathbf{Q}_p)$ action on $\mathcal{S}_{(0)}(\mathbf{Q}_p^2)\otimes C_c^\infty(\Gamma(k)/K)$ is given by $\gamma\cdot(\phi\otimes \xi):=\phi((-)\gamma)\otimes \xi(\gamma^{-1}(-))$. \textcolor{Black}{We have} the space $C_c^\infty(K\backslash\Gamma(k)/K)$ of compactly supported, $K$-bi-invariant $\mathbf{C}$-valued functions on $\Gamma(k)$. \textcolor{Black}{Upon fixing a $\mathbf{Q}$-valued Haar measure $d\gamma$ on $\Gamma(k)$, this is an associative unital algebra under the usual convolution}
$$\textcolor{Black}{(f_1f_2)(x):=\int_{\Gamma(k)}f_1(\gamma)f_2(\gamma^{-1}x)\ d\gamma,\ f_1,f_2\in C_c^\infty(K\backslash\Gamma(k)/K),x\in\Gamma(k).}$$
\textcolor{Black}{There is a natural action of this algebra on $C_c^\infty(\Gamma(k)/K)$ given by}
$$\textcolor{Black}{(f\cdot\xi)(x):=\int_{\Gamma(k)}f(\gamma)\xi(x\gamma)\ d\gamma,\ f\in C_c^\infty(K\backslash\Gamma(k)/K),\xi\in C_c^\infty(\Gamma(k)/K),x\in\Gamma(k).}$$
\textcolor{Black}{We can now regard $\mathcal{I}_{(0)}(\Gamma(k)/K)$ as a well-defined $C_c^\infty(K\backslash\Gamma(k)/K)$-module by setting $f\cdot (\phi\otimes\xi):=\phi\otimes(f\cdot\xi)$ for $f\in C_c^\infty(K\backslash\Gamma(k)/K)$}. These modules will be used throughout our work, for various choices of $\Gamma$. Given an element $f\in C_c^\infty(K\backslash\Gamma(k)/K)$, we always write $f^{'}$ for the element of $C_c^\infty(K\backslash\Gamma(k)/K)$ given by $f((-)^{-1})$. \textcolor{Black}{Given a subset $V\subseteq \Gamma(k)$, we write $\ch(V)$ for the characteristic function of $V$.}
\begin{defn}\cite[\S $3.2$]{Loeffler_2021}
    Let $A\subseteq\mathbf{C}$ be a $\mathbf{Z}$-algebra and $U\subseteq K$ an open compact subgroup of $\Gamma(k)$. The lattice $\mathcal{I}_{(0)}(\Gamma(k)/U,A)$ of $A$-integral elements at level $U$, is given by all $A$-linear combinations $\phi\otimes\ch(gU)\in \mathcal{S}_{(0)}(\mathbf{Q}_p^2)\otimes C_c^\infty(\Gamma(k)/U)$, with $\phi$ valued in 
    $$\frac{1}{\vol_{G(\mathbf{Q}_p)}(\mathrm{Stab}_{G(\mathbf{Q}_p)}(\phi)\cap gUg^{-1})}\cdot A\subseteq\mathbf{C}.$$
\end{defn}
\noindent As in \cite[Proposition $3.2.3$]{Loeffler_2021} , we have a trace map at integral level:
$$\mathrm{Tr}:=\mathrm{Tr}^U_K:\mathcal{I}_{(0)}(\Gamma(k)/U,A)\longrightarrow\mathcal{I}_{(0)}(\Gamma(k)/K,A),\ \phi\otimes \xi\mapsto\sum_{\gamma\in K/U} \phi\otimes \xi((-)\gamma).$$
We will always omit the dependence of this trace map, on $U$ and $K$, since it will be clear from context.
\section{Local theory}\label{sec local results}

Throughout this section we always let $p$ be an odd prime, we write $F/\mathbf{Q}_p$ for the unique unramified quadratic extension of $\mathbf{Q}_p$ and we fix $\alpha\in\mathcal{O}_F^\times$ such that $\mathcal{O}_F=\mathbf{Z}_p[\alpha]$. We denote by $B$ the upper triangular Borel of $G$, by $N$ its unipotent radical, and by $Z$ the center of $G$. We set 
$$\mathcal{H}_{G(F)}^\circ:=C_c^\infty(G(\mathcal{O}_F)\backslash G(F)/G(\mathcal{O}_F)),\ \mathcal{H}_{G(\mathbf{Q}_p)}^\circ:=C_c^\infty(G(\mathbf{Z}_p)\backslash G(\mathbf{Q}_p)/ G(\mathbf{Z}_p))$$
to be the spherical Hecke algebras of $G(F)$ and $G(\mathbf{Q}_p)$ under the usual convolution \textcolor{Black}{given in the previous section}, with respect to the Haar measure on $G(F)$, respectively $G(\mathbf{Q}_p)$, which gives $G(\mathcal{O}_F)$, respectively $G(\mathbf{Z}_p)$, volume $1$. We also  introduce the determinant open compact level subgroup
 $$G(\mathcal{O}_F)[p]:=\{g\in G(\mathcal{O}_F)\ |\ \det(g)\in 1+p\mathcal{O}_F\}\subseteq G(\mathcal{O}_F).$$
 The subgroup $G(\mathcal{O}_F)[p]$ will appear again from \Cref{sec local factors} onwards.
\subsection{Cyclicity}\label{sec cyclicity} Let $M:=C_c^{\infty}(G(\mathbf{Z}_p)\backslash G(F)/G(\mathcal{O}_F))$ be the space of compactly supported $\mathbf{C}$-valued functions, that are left $G(\mathbf{Z}_p)$-invariant and right $G(\mathcal{O}_F)$-invariant. We regard $M$ as a module over the Hecke algebra $\mathcal{H}_{G(F)}^\circ\otimes \mathcal{H}_{G(\mathbf{Q}_p)}^\circ$ via
\begin{align*}
    \left((\theta_1\otimes \theta_2)\cdot\xi\right)(x):=\int_{G(F)}\int_{G(\mathbf{Q}_p)} \theta_1(g)\theta_2(h)\textcolor{Black}{\xi}(h^{-1}xg)\ dg\ dh\textcolor{Black}{,\ \ \ \xi\in M.}
\end{align*}
where as usual, $dg$, resp. $dh$, are the normalized Haar measures on $G(F)$, resp. $G(\mathbf{Q}_p)$, giving $G(\mathcal{O}_F)$, resp. $G(\mathbf{Z}_p)$, volume $1$.

We wish to show that $M$ is cyclic \textcolor{Black}{over $\mathcal{H}_{G(F)}^\circ\otimes \mathcal{H}_{G(\mathbf{Q}_p)}^\circ$}, generated by $\ch(G(\mathcal{O}_F))$. Since $\mathrm{Res}_{F/\mathbf{Q}_p}\GL_2$ is non-split over $\mathbf{Q}_p$, we cannot apply the result of \cite{sakellaridis2013spherical}, like we did in \cite{groutides2024rankinselbergintegralstructureseuler}. Instead, we proceed via an argument inspired by the proofs of the uniqueness of Whittaker and Shintani functions of \cite{murase1996shintani} and \cite{kato2004p}.
\begin{thm}\label{thm cyclicity of M}
    The module $M$ is cyclic over $\mathcal{H}_{G(F)}^\circ\otimes \mathcal{H}_{G(\mathbf{Q}_p)}^\circ$ generated by the characteristic function $\ch(G(\mathcal{O}_F))$.
\end{thm}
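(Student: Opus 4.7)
The plan is to adapt the strategy used by Murase-Sugano and Kato in their proofs of uniqueness of Shintani functions on $p$-adic groups. Since the pair $(\GL_2,\mathrm{Res}_{F/\mathbf{Q}_p}\GL_2)$ is spherical but not split over $\mathbf{Q}_p$, the sphericality theorem of \cite{sakellaridis2013spherical} cannot be invoked directly as in the split Rankin-Selberg setup of \cite{groutides2024rankinselbergintegralstructureseuler}. Instead, I would explicitly parametrize the double coset space $G(\mathbf{Z}_p)\backslash G(F)/G(\mathcal{O}_F)$ and run a triangularity-plus-induction argument on the combined Hecke action.

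First, I would fix a set of representatives for $G(\mathbf{Z}_p)\backslash G(F)/G(\mathcal{O}_F)$. The Cartan decomposition $G(F)=\bigsqcup_{m\leq n}G(\mathcal{O}_F)\diag(p^m,p^n)G(\mathcal{O}_F)$ reduces the question to understanding, for each $(m,n)$, the $G(\mathbf{Z}_p)$-orbits on $G(\mathcal{O}_F)/\bigl(G(\mathcal{O}_F)\cap\diag(p^m,p^n)G(\mathcal{O}_F)\diag(p^m,p^n)^{-1}\bigr)$. These orbits admit a Bruhat-type description modulo $p^{n-m}$ that reflects the non-split structure of $\mathbf{F}_{p^2}/\mathbf{F}_p$ (e.g.\ for $n-m=1$ the orbits correspond to $\mathbf{P}^1(\mathbf{F}_p)$ and $\mathbf{P}^1(\mathbf{F}_{p^2})\setminus\mathbf{P}^1(\mathbf{F}_p)$ under the action of $\GL_2(\mathbf{F}_p)$). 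On the resulting parameter set $\Lambda$ I would define a length function $\ell:\Lambda\to\mathbf{Z}_{\geq 0}$ recording both the Cartan valuation $n-m$ and the ``non-rationality'' of the refining coset representative, i.e.\ whether it can be taken in $G(\mathbf{Q}_p)\cdot G(\mathcal{O}_F)$ or not.

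Next, I would compute the action of the generators of $\mathcal{H}_{G(F)}^\circ$ and $\mathcal{H}_{G(\mathbf{Q}_p)}^\circ$ on the characteristic functions $\ch(G(\mathbf{Z}_p)\gamma_\lambda G(\mathcal{O}_F))$. A direct calculation gives $\theta_1\cdot \ch(G(\mathcal{O}_F))=\theta_1'$ for $\theta_1\in\mathcal{H}_{G(F)}^\circ$, so the $\mathcal{H}_{G(F)}^\circ$-orbit of $\ch(G(\mathcal{O}_F))$ is precisely the subspace $\mathcal{H}_{G(F)}^\circ\subseteq M$ of left $G(\mathcal{O}_F)$-invariant elements; the action of $\theta_2\in\mathcal{H}_{G(\mathbf{Q}_p)}^\circ$ breaks this invariance, producing functions supported on $G(\mathbf{Q}_p)G(\mathcal{O}_F)$, and combining the two actions pushes into the remaining double cosets. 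The key claim to establish is that the combined Hecke action is upper-triangular in $\ell$ with nonzero leading coefficient, so that induction on $\ell$ produces every $\ch(G(\mathbf{Z}_p)\gamma_\lambda G(\mathcal{O}_F))$ from $\ch(G(\mathcal{O}_F))$.

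The main obstacle is precisely this triangularity step. One must carefully track how the refinement of $G(\mathcal{O}_F)$-cosets into $G(\mathbf{Z}_p)$-cosets interacts with convolution on both sides, and verify that the leading coefficient of each Hecke step does not vanish due to cancellations specific to the unramified quadratic extension $F/\mathbf{Q}_p$. Such coset-counting arguments are analogous to the volume computations appearing in the Shintani uniqueness proofs of \cite{murase1996shintani} and \cite{kato2004p}. Once triangularity is secured, the inductive conclusion is formal.
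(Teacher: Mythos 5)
Your proposal identifies the correct overall strategy — a generalized Cartan parametrization of $G(\mathbf{Z}_p)\backslash G(F)/G(\mathcal{O}_F)$ followed by a triangularity-plus-induction argument in the style of the Shintani-function uniqueness proofs of Murase--Sugano and Kato — which is indeed what the paper does. But you stop exactly at the point that carries the mathematical content. You write that ``the main obstacle is precisely this triangularity step'' and sketch only what would need to be ``carefully tracked,'' without actually producing the coset representatives, the ordering, or the verification that the leading term survives. As written, the proposal is a plan, not a proof: the part you defer is the entire theorem.

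For comparison, the paper fills this gap concretely. It first decomposes $\mathbf{P}^1_F\simeq G(F)/B(F)$ into two $G(\mathbf{Q}_p)$-orbits (the rational locus and its complement), identifies the second orbit with $G(\mathbf{Q}_p)/\mathrm{Res}_{F/\mathbf{Q}_p}\GL_1(\mathbf{Q}_p)$, and from this derives the explicit generalized Cartan decomposition
\[
G(F)=\bigcup_{\nu_1\geq\nu_2,\ \nu\geq 0} G(\mathbf{Z}_p)\,t(1,\nu)^{-1}\,n_\alpha^t\,t(\nu_2,\nu_1)^{-1}\,G(\mathcal{O}_F),
\]
where $n_\alpha^t=\left[\begin{smallmatrix}1&\\ \alpha&1\end{smallmatrix}\right]^t$. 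This gives a three-parameter family of representatives, not the two-parameter one you propose (Cartan valuation plus a binary ``non-rationality'' flag); your coarser length function would likely not separate the distinct double cosets $G(\mathbf{Z}_p)t(1,\nu)^{-1}n_\alpha^t t(\nu_2,\nu_1)^{-1}G(\mathcal{O}_F)$ with the same $\nu_1-\nu_2$ but different $\nu$. The paper orders these by $s(\nu_2,\nu_1,\nu)=(\nu+\nu_1-\nu_2,\ \nu_1-\nu_2)$ lexicographically, and establishes the triangularity by three explicit matrix-entry computations: comparing $v_p(\det)$ gives the total-valuation constraint $\tilde{\nu}_2+\tilde{\nu}_1+\tilde{\nu}=\nu_2+\nu_1+\nu$; the $p$-adic valuation of the bottom-right entry gives $\tilde{\nu}+\tilde{\nu}_1\leq\nu+\nu_1$ (hence the first component of $s$ is weakly decreasing); and, in the boundary case, the valuation of the top-right entry, computed using $\mathcal{O}_F=\mathbf{Z}_p\oplus\alpha\mathbf{Z}_p$, forces the second component to strictly drop. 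These elementary valuation estimates are precisely what you flag as the ``main obstacle,'' so to complete your argument you would need to supply something equivalent.
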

\begin{proof}
     We have an action of $G(F)$ on $\mathbf{P}^1_F$ given by $\left[\begin{smallmatrix}
        a & b \\
        c & d
    \end{smallmatrix}\right]\cdot (x:y)=(ax+by:cx+dy)$.
    One easily checks that the flag variety decomposes into $G(\mathbf{Q}_p)$-orbits:
    \begin{align}\label{eq: decomp flag variety}
        G(F)/B(F)\simeq_\iota \mathbf{P}_F^1=G(\mathbf{Q}_p)\cdot (1:0)\ \sqcup\ G(\mathbf{Q}_p)\cdot (1:\alpha)
    \end{align}
    where $\iota$ is given by $g\mapsto g\cdot (1:0)$. The two orbits can be identified with:
    \begin{itemize}
        \item $G(\mathbf{Q}_p)\cdot (1:0)\simeq G(\mathbf{Q}_p)/B(\mathbf{Q}_p)=G(\mathbf{Z}_p)$\item $G(\mathbf{Q}_p)\cdot (1:\alpha)\simeq G(\mathbf{Q}_p)/\left(\mathrm{Res}_{F/\mathbf{Q}_p}\GL_1(\mathbf{Q}_p)\right)$ where $\mathrm{Res}_{F/\mathbf{Q}_p}\GL_1$ is regarded as a subgroup of $G$ via $a+b\alpha\mapsto \left[\begin{smallmatrix}
            a & b\\
            b\alpha^2 & a
        \end{smallmatrix}\right]$.
    \end{itemize}
    Write $t(a,b):=\mathrm{diag}(p^a,p^b)$ for $a,b\in\mathbf{Z}$. By \cite[A$.0.2$]{groutides2024integral} we have the decomposition 
    $$G(\mathbf{Q}_p)=\bigcup_{a\in\mathbf{Z}_\geq 0} G(\mathbf{Z}_p)\ t(a,\textcolor{Black}{0})^{-1}\ \mathrm{Res}_{F/\mathbf{Q}_p}\GL_1(\mathbf{Q}_p).$$
    Thus
    \begin{align*}G(\mathbf{Q}_p)\cdot (1:\alpha) = \bigcup_{a\in\mathbf{Z}_{\geq 0}} G(\mathbf{Z}_p)\ t(a,\textcolor{Black}{0})^{-1}\cdot (1:\alpha)
    =\bigcup_{a\in\mathbf{Z}_{\geq 0}} G(\mathbf{Z}_p)\ n_{p^a\alpha}\cdot (1:0)
    \end{align*}
    where $n_{p^a\alpha}:=\left[\begin{smallmatrix}
        1 & \\
        p^a\alpha & 1
    \end{smallmatrix}\right]$. Thus pulling back $\iota$ in \eqref{eq: decomp flag variety} we obtain $G(F)/B(F)=\overline{G(\mathbf{Z}_p)}\ \cup\ \left(\cup_{a\in\mathbf{Z}_{\geq 0}} \overline{G(\mathbf{Z}_p)n_{p^a\alpha}}\right)$ where the overline denotes the image in $G(F)/B(F)$ under the canonical projection. Iwasawa decomposition then implies that
    $$G(\mathcal{O}_F)=G(\mathbf{Z}_p)B(\mathcal{O}_F)\cup \left(\bigcup_{a\in\mathbf{Z}_{\geq 0}} G(\mathbf{Z}_p)\ n_{p^a\alpha}\ B(\mathcal{O}_F)\right).$$
    Cartan decomposition then gives
    \begin{align}\label{eq:decomp 1}
        G(F)=\bigcup_{\lambda_1\leq \lambda_2}\left(G(\mathbf{Z}_p)B(\mathcal{O}_F)\cup \left(\bigcup_{a\in\mathbf{Z}_{\geq 0}} G(\mathbf{Z}_p)\ n_{p^a\alpha}\ B(\mathcal{O}_F)\right)\right)\ t(\lambda_1,\lambda_2)\ G(\mathcal{O}_F).
    \end{align}
    Using that $B(\mathcal{O}_F)t(\lambda_1,\lambda_2)G(\mathcal{O}_F)=t(\lambda_1,\lambda_2)G(\mathcal{O}_F)$ for $\lambda_1\leq\lambda_2$ and $n_{p^a\alpha}t(\lambda_1,\lambda_2)G(\mathcal{O}_F)=t(\lambda_1,\lambda_2)G(\mathcal{O}_F)$ for $a\geq \lambda_2-\lambda_1$, we see that \eqref{eq:decomp 1} reduces to
    \begin{align}\label{eq:decomp 2}
        G(F)=\bigcup_{\mu_1\geq \mu_2}\ \ \bigcup_{0\leq a < \mu_1-\mu_2}\left(G(\mathbf{Z}_p)\ t(\mu_1,\mu_2)\ G(\mathcal{O}_F)\ \cup\ G(\mathbf{Z}_p)\ n_{p^a\alpha}^t\ t(\mu_1,\mu_2)\ G(\mathcal{O}_F)\right)
    \end{align}
    \textcolor{Black}{Indeed, after conjugating by the non-trivial Weyl element $w_0:=\left[\begin{smallmatrix}
        & -1\\
        1& 
    \end{smallmatrix}\right]\in G(\mathbf{Z}_p)$, we have $w_0t(\lambda_1,\lambda_2)w_0^{-1}=t(\lambda_2,\lambda_1)$ and $w_0n_{p^a\alpha}t(\lambda_1,\lambda_2)w_0^{-1}=n_{-p^a\alpha}^tt(\lambda_2,\lambda_1)=\left[\begin{smallmatrix}
        -1 & \\
         & 1
    \end{smallmatrix}\right]n_{p^a\alpha}^tt(\lambda_2,\lambda_1)\left[\begin{smallmatrix}
        -1 & \\
         & 1
    \end{smallmatrix}\right]$.} Now, $$n_{p^a\alpha}^t\  t(\mu_1,\mu_2)=t(a,\textcolor{Black}{0})\ n_\alpha^t\  t(\mu_1-a,\mu_2)$$
    and $\mu_1-\alpha-\mu_2>0$. Using this, we re-write \eqref{eq:decomp 2} as 
    \begin{align}\label{eq: Generalized Cartan decomp}
        G(F)= \bigcup_{\substack{\nu_1\geq \nu_2\\ \nu\geq0}} G(\mathbf{Z}_p)\ t(\textcolor{Black}{0},\nu)^{-1}\ n_\alpha^t\ t(\nu_2,\nu_1)^{-1}\ G(\mathcal{O}_F).
    \end{align}
    This is the generalized Cartan decomposition we will use in order to establish cyclicity of $M$. Because of \eqref{eq: Generalized Cartan decomp}, it suffices to show that the characteristic functions:
\begin{align}\label{eq: xi functions}
\xi_{\nu_2,\nu_1,\nu}:=\ch\left(G(\mathbf{Z}_p)\ t(\textcolor{Black}{0},\nu)^{-1}\ n_\alpha^t\ t(\nu_2,\nu_1)^{-1}\ G(\mathcal{O}_F)\right)\  \mathrm{with}\ \nu_1\geq \nu_2\ \mathrm{and}\ \nu\geq 0
\end{align}
 all belong in the submodule $M^{'}:=\left(\mathcal{H}_{G(F)}^\circ\otimes \mathcal{H}_{G(\mathbf{Q}_p)}^\circ\right)\cdot \ch(G(\mathcal{O}_F)).$ It will also be convenient to set:
 \begin{itemize}
     \item $\xi_0:=\xi_{0,0,0}.$
     \item $\phi_{a,b}^{\mathbf{Q}_p}:=\ch\left(G(\mathbf{Z}_p)\ t(a,b)^{-1}\ G(\mathbf{Z}_p)\right)\in \mathcal{H}_{G(\mathbf{Q}_p)}^\circ$ with $a,b\in\mathbf{Z}.$
     \item $\phi_{a,b}^F:=\ch\left(G(\mathcal{O}_F)\ t(a,b)\ G(\mathcal{O}_F)\right)\in \mathcal{H}_{G(F)}^\circ$ with $a,b\in\mathbf{Z}$.
 \end{itemize}
  We proceed by induction on the set of tuples of non-negative integers
 $$s(\nu_2,\nu_1,\nu):=(\nu+\nu_1-\nu_2,\nu_1-\nu_2)\in\mathbf{Z}_{\geq0}\times \mathbf{Z}_{\geq0}$$
 under lexicographic ordering.
 The base case is $s(\nu_2,\nu_1,\nu)=(0,0)$ in which case $\xi_{\nu_2,\nu_1,\nu}$ is nothing more than $\ch(G(\mathcal{O}_F))$ and there is nothing to prove. Now consider arbitrary $(\nu_2,\nu_1,\nu)$ as in \eqref{eq: xi functions}. The function
 \begin{align*}
     \left(\phi_{\nu_2,\nu_1}^F\otimes \phi_{1,\nu}^{\mathbf{Q}_p}\right)\cdot \xi_0=\int_{G(F)}\int_{G(\mathbf{Q}_p)}\phi_{1,\lambda}^{\mathbf{Q}_p}(h)\ \phi_{\nu_2,\nu_1}^F(g)\ \xi_0(h^{-1}(-)g)\ dg\ dh
 \end{align*}
 is supported on $U_{\nu_2,\nu_1,\nu}:=G(\mathbf{Z}_p)t(\textcolor{Black}{0},\nu)^{-1}G(\mathcal{O}_F)t(\nu_2,\nu_1)^{-1} G(\mathcal{O}_F)\supseteq G(\mathbf{Z}_p)t(\textcolor{Black}{0},\nu)^{-1\ }n_\alpha^t\ t(\nu_2,\nu_1)^{-1} G(\mathcal{O}_F) $ . Hence, up to a non-zero scalar, the function $\xi_{\nu_2,\nu_1,\nu}$ is given by a finite sum
 \begin{align}\label{eq: 7}
\xi_{\nu_2,\nu_1,\nu}=\left(\phi_{\nu_2,\nu_1}^F\otimes \phi_{1,\nu}^{\mathbf{Q}_p}\right)\cdot \xi_0 - \left(\sum_{(\Tilde{\nu}_2,\Tilde{\nu}_1,\Tilde{\nu})} c({\Tilde{\nu}_2,\Tilde{\nu}_1,\Tilde{\nu}})\xi_{\Tilde{\nu}_2,\Tilde{\nu}_1,\Tilde{\nu}}\right)
 \end{align}
 where $c({\Tilde{\nu}_2,\Tilde{\nu}_1,\Tilde{\nu}})$ are non-zero scalars and the sum on the right runs over a finite number of triples $({\Tilde{\nu}_2,\Tilde{\nu}_1,\Tilde{\nu}})$ for which $G(\mathbf{Z}_p)t(1,\Tilde{\nu})^{-1}\ n_\alpha^t\ t(\Tilde{\nu}_2,\Tilde{\nu}_1)^{-1}\ G(\mathcal{O}_F)\subseteq U_{\nu_2,\nu_1,\nu}$ and $\xi_{\Tilde{\nu}_2,\Tilde{\nu}_1,\Tilde{\nu}}\neq \xi_{\nu_2,\nu_1,\nu}$. Consider such a triple appearing in the sum of \eqref{eq: 7}. We can write 
 \begin{align}\label{eq: 8}t(1,\Tilde{\nu})^{-1}\ n_\alpha^t\ t(\Tilde{\nu}_2,\Tilde{\nu}_1)^{-1}= k\ t(\textcolor{Black}{0},\nu)^{-1}\ k_1\ t(\nu_2,\nu_1)^{-1}\ k_2 \end{align}
 where $k\in G(\mathbf{Z}_p)$ and $k_1,k_2\in G(\mathcal{O}_F)$. Applying $v_p(\det(-))$ to \eqref{eq: 8}, we obtain 
 \begin{align}\label{eq: 9}
\Tilde{\nu}_2+\Tilde{\nu}_1+\Tilde{\nu}=\nu_2+\nu_1+\nu.
 \end{align}
 An explicit calculation \textcolor{Black}{of \eqref{eq: 8}} shows that $$t(1,\Tilde{\nu})^{-1}\ n_\alpha^t\ t(\Tilde{\nu}_2,\Tilde{\nu}_1)^{-1}=\left[\begin{smallmatrix}
     p^{-\Tilde{\nu}_2} & \alpha p^{-\Tilde{\nu}_1}\\
      & p^{-\Tilde{\nu}-\Tilde{\nu}_1}
 \end{smallmatrix}\right]\textcolor{Black}{=\left[\begin{smallmatrix}
     * & *\\
     * & r_1p^{-\nu_2}+r_2p^{-\nu-\nu_2}+r_3p^{-\nu_1}+r_4p^{-\nu-\nu_1}
 \end{smallmatrix}\right],\ \ \ r_i\in\mathbf{Z}_p}$$
 Thus, taking $p$-adic valuation of the bottom-right entry of \eqref{eq: 8}, we get
 \begin{align}\label{eq: 10}
     \Tilde{\nu}+\Tilde{\nu}_1\leq \nu+\nu_1
 \end{align}
 Combining \eqref{eq: 9} and \eqref{eq: 10}, we obtain
 \begin{align}\label{eq:11}
     \Tilde{\nu}+\Tilde{\nu}_1-\Tilde{\nu}_2\leq \nu+\nu_1-\nu_2
 \end{align}
 If the inequality in \eqref{eq:11} is strict then, the function $\xi_{\Tilde{\nu}_2,\Tilde{\nu}_1,\Tilde{\nu}}$ is contained in $M^{'}$ by induction. Thus, we assume that the inequality in \eqref{eq:11} is an equality, in which case combining again with \eqref{eq: 9}, we obtain $\Tilde{\nu}_2=\nu_2$. Now, writing $k^{-1}=\left[\begin{smallmatrix}
     h_1 & h_2 \\
     * & *
 \end{smallmatrix}\right]$, we can re-write \eqref{eq: 8} as 
 \begin{align}\label{eq:12}
     \left[\begin{matrix}
         * & h_2p^{-\Tilde{\nu}-\Tilde{\nu}_1}+\alpha h_1p^{-\Tilde{\nu}_1}\\
         * & *
     \end{matrix}\right]=t(\textcolor{Black}{0},\nu)^{-1}k_1t(\nu_2,\nu_1)^{-1}k_2.
 \end{align}
 We can re-write the top-right entry of \eqref{eq:12} as
 \begin{align}\label{eq: 13}
     h_2p^{-\Tilde{\nu}-\Tilde{\nu}_1}+\alpha h_1p^{-\Tilde{\nu}_1}=p^C\left(up^{-\Tilde{\nu}-\Tilde{\nu}_1+v_p(h_2)-C}+\alpha h_1p^{-\Tilde{\nu}_1-C}\right)
 \end{align}
 where $C:=\mathrm{min}\{-\Tilde{\nu}_1,-\Tilde{\nu}_1-\Tilde{\nu}+v_p(h_2)\}$ and $u\in\mathbf{Z}_p^\times$. It's not hard to see, using $\mathcal{O}_F=\mathbf{Z}_p\oplus\alpha\mathbf{Z}_p$ and $k\in G(\mathbf{Z}_p)$, that for both values of $C$, the bracketed element of \eqref{eq: 13} is always in $\mathcal{O}_F^\times$ and thus the $p$-adic valuation of \eqref{eq: 13} is equal to $C$. Going back to \eqref{eq:12} and taking $p$-adic valuation of the top-right entry, we obtain
 \begin{align}\label{eq: 14}
     C\geq -\nu_1.
 \end{align}
 Again, for both possible values of $C$, this implies that $\Tilde{\nu}_1\leq \nu_1$. Since we are still assuming that \eqref{eq:11} is an equality, hence  $\Tilde{\nu}_2=\nu_2$, we have \begin{align}
\Tilde{\nu}_1-\Tilde{\nu}_2\leq \nu_1-\nu_2.
 \end{align}
 However this inequality must now be strict since we are assuming that $\xi_{\Tilde{\nu}_2,\Tilde{\nu}_1,\Tilde{\nu}}\neq \xi_{\nu_2,\nu_1,\nu}$. Thus, in any case, we have
 $$s(\Tilde{\nu}_2,\Tilde{\nu}_1,\Tilde{\nu})<s(\nu_2,\nu_1,\nu)$$
 and the result follows by induction.
\end{proof}

\begin{cor}\label{cor cyclicity of I}
    The module $\mathcal{I}(G(F)/G(\mathcal{O}_F))$ is cyclic over $\mathcal{H}_{G(F)}^\circ$, generated by $\ch(\mathbf{Z}_p^2)\otimes \ch(G(\mathcal{O}_F))$.
    \begin{proof}
        The detailed argument of \cite[Theorem $3.2.1$]{groutides2024rankinselbergintegralstructureseuler} works verbatim. The key ingredients in this case being \Cref{thm cyclicity of M} and \cite[ Lemma $4.3.3$, Proposition $4.3.4$, Theorem $4.3.6$]{Loeffler_2021}.
    \end{proof}
\end{cor}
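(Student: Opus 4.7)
The plan is to import the strategy of [\cite{groutides2024rankinselbergintegralstructureseuler} Theorem $3.2.1$] verbatim; the only substantive new input is \Cref{thm cyclicity of M}, which replaces the split-case cyclicity result used there. The key linking object is the $\mathcal{H}_{G(F)}^\circ$-equivariant map
\[
\Psi\colon M \longrightarrow \mathcal{I}(G(F)/G(\mathcal{O}_F)),\qquad \xi\longmapsto[\ch(\mathbf{Z}_p^2)\otimes \xi],
\]
which sends the $\mathcal{H}_{G(F)}^\circ\otimes\mathcal{H}_{G(\mathbf{Q}_p)}^\circ$-generator $\ch(G(\mathcal{O}_F))$ of $M$ to the desired generator $[\ch(\mathbf{Z}_p^2)\otimes\ch(G(\mathcal{O}_F))]$ of $\mathcal{I}(G(F)/G(\mathcal{O}_F))$. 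Two things remain: (i) every class in $\mathcal{I}(G(F)/G(\mathcal{O}_F))$ lies in the $\mathcal{H}_{G(F)}^\circ$-submodule generated by the image of $\Psi$; and (ii) the auxiliary $\mathcal{H}_{G(\mathbf{Q}_p)}^\circ$-action on $M$ appearing in \Cref{thm cyclicity of M} is absorbed, after applying $\Psi$, by the $G(\mathbf{Q}_p)$-coinvariance defining $\mathcal{I}(G(F)/G(\mathcal{O}_F))$. Once (i) and (ii) are in place, $\mathcal{H}_{G(F)}^\circ$-cyclicity of the target is a formal consequence of the bi-Hecke cyclicity of $M$.

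For (ii), I would exploit the coinvariance relation $[\phi\otimes \xi(h^{-1}(-))]=[\phi((-)h^{-1})\otimes \xi]$ with $h\in G(\mathbf{Q}_p)$ to convert the left action of $\theta\in\mathcal{H}_{G(\mathbf{Q}_p)}^\circ$ on $\xi\in M$ into a right action of the involuted element $\theta^\vee$ on the Schwartz factor $\ch(\mathbf{Z}_p^2)$; a change of variables $u=h^{-1}$ and the unimodularity of $G(\mathbf{Q}_p)$ makes this clean. The resulting function $\theta^\vee\cdot\ch(\mathbf{Z}_p^2)$ is a finite linear combination of indicators of lattices in $\mathbf{Q}_p^2$, and [\cite{Loeffler_2021} Lemma $4.3.3$, Proposition $4.3.4$] allow each such indicator to be rewritten as a Hecke translate of $\ch(\mathbf{Z}_p^2)$ in $\mathcal{I}(G(F)/G(\mathcal{O}_F))$ after further use of coinvariance. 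Step (i) follows the same pattern: an arbitrary Schwartz input $\phi$ is first replaced via coinvariance by a $G(\mathbf{Z}_p)$-fixed combination of lattice indicators, and then [\cite{Loeffler_2021} Theorem $4.3.6$] expresses each such summand as an $\mathcal{H}_{G(F)}^\circ$-translate of something in the image of $\Psi$.

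I expect the main obstacle to be purely notational: carefully tracking the left/right translation conventions through the coinvariance relation, verifying that the involution $\theta\mapsto\theta^\vee$ preserves $\mathcal{H}_{G(\mathbf{Q}_p)}^\circ$, and ensuring that the Hecke actions on the two tensor factors commute in the appropriate sense. All of this bookkeeping is already performed in the split-case proof cited at the start, and transfers without change to the present inert setting now that \Cref{thm cyclicity of M} is available.
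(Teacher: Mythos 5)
Your proposal is correct and takes essentially the same approach as the paper: you build the $\mathcal{H}^\circ_{G(F)}$-equivariant comparison map $\Psi$ from the bi-Hecke module $M$ to $\mathcal{I}(G(F)/G(\mathcal{O}_F))$, feed in the new inert-case cyclicity from \Cref{thm cyclicity of M}, and absorb the auxiliary $\mathcal{H}^\circ_{G(\mathbf{Q}_p)}$-action into the Schwartz factor via $G(\mathbf{Q}_p)$-coinvariance, exactly as in the split-case argument of [\cite{groutides2024rankinselbergintegralstructureseuler} Theorem~3.2.1]. Your explicit unpacking of the involution $\theta\mapsto\theta^\vee$ and the lattice-indicator bookkeeping reproduces what the cited results [\cite{Loeffler_2021} Lemma~4.3.3, Proposition~4.3.4, Theorem~4.3.6] furnish as a black box, so the two arguments coincide in substance.
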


\subsection{Utilizing the mirabolic subgroup}
Let $P$ denote the mirabolic subgroup of $G$. In this short section, we verify that the passage from $\mathcal{I}(G(F)/G(\mathcal{O}_F))$ to $C_c^\infty(P(\mathbf{Q}_p)\backslash G(F)/G(\mathcal{O}_F))$ still works (integrally) as in \cite[\S 3.3 \& \S 3.4]{groutides2024rankinselbergintegralstructureseuler}. This is a crucial step in our argument, since it is what allows us to attack the problem in its full generality, at an integral level. Here as usual, $C_c^\infty(P(\mathbf{Q}_p)\backslash G(F)/G(\mathcal{O}_F))$ denotes the space of left $P(\mathbf{Q}_p)$-invariant and right $G(\mathcal{O}_F)$-invariant $\mathbf{C}$-valued function on $G(F)$, that are compactly supported modulo $P(\mathbf{Q}_p)$, regarded as a $\mathcal{H}_{G(F)}^\circ$-module under the right translation action. \textcolor{Black}{Explicitly we have}
$$\textcolor{Black}{(f\cdot\xi)(x):=\int_{G(F)}f(g)\xi(xg)\ dg,\ f\in \mathcal{H}_{G(F)}^\circ,\xi\in C_c^\infty(P(\mathbf{Q}_p)\backslash G(F)/G(\mathcal{O}_F)), x\in G(F).}$$We also write $C_c^\infty(P(\mathbf{Q}_p)\backslash G(F)/G(\mathcal{O}_F),\mathbf{Z}[1/p])$ for the lattice of such functions which take values in $\mathbf{Z}[1/p]$.
\begin{prop}\label{prop Hecke equiv map}
    There exists a $\mathcal{H}_{G(F)}^\circ$-equivariant map $\Xi_c$ which induces the following diagram
    \[\begin{tikzcd}[ampersand replacement=\&,cramped]
	{\mathcal{I}(G(F)/G(\mathcal{O}_F))} \& {C_c^\infty(P(\mathbf{Q}_p)\backslash G(F)/G(\mathcal{O}_F))} \& {} \\
	{\mathcal{I}(G(F)/G(\mathcal{O}_F),\mathbf{Z}[1/p])} \& {C_c^\infty(P(\mathbf{Q}_p)\backslash G(F)/G(\mathcal{O}_F),\mathbf{Z}[1/p])}
	\arrow["{\Xi_c}", from=1-1, to=1-2]
	\arrow[hook', from=2-1, to=1-1]
	\arrow["{\Xi_c}", from=2-1, to=2-2]
	\arrow[hook, from=2-2, to=1-2]
\end{tikzcd}\]
and maps $\ch(\mathbf{Z}_p^2)\otimes \ch(G(\mathcal{O}_F))$ to $\ch(P(\mathbf{Q}_p)G(\mathcal{O}_F))$.
\begin{proof}
    The construction in \cite[\S$3.3$, \S$3.4$]{groutides2024rankinselbergintegralstructureseuler} works verbatim, this time utilizing \Cref{cor cyclicity of I} in order to prove the equivalent statement of \cite[Proposition $3.4.2$]{groutides2024rankinselbergintegralstructureseuler}. \textcolor{Black}{The map $\Xi_c$ is given explicitely by firstly applying the map $\Xi$ which sends an element $\phi\otimes\xi$ to the function $g\mapsto \int_{G(\mathbf{Q}_p)}\xi(h^{-1}g)\phi((0,1)h)\ dh$ (that, a priori, might not be compactly supported) and then applying the Hecke operator $(1-\mathcal{S}_F^{-1})$ where $\mathcal{S}_F:=\ch(\left[\begin{smallmatrix}
        p & \\
        & p
    \end{smallmatrix}\right]G(\mathcal{O}_F)).$ Concretely, we have $\Xi_c(\phi\otimes\xi):=(1-\mathcal{S}_F^{-1})\cdot \Xi(\phi\otimes\xi).$} There's one small thing that needs to be checked, and that is the convergence of the integral involved in the construction of the map $\Xi_c$. This does depend on the particular setup. In this case, the convergence follows from the fact that the $G(\mathbf{Q}_p)\cap gG(\mathcal{O}_F)g^{-1}$ is an open compact subgroup of $G(\mathbf{Q}_p)$, which is easily seen to be true since $F/\mathbf{Q}_p$ is unramified.
    \end{proof}
\end{prop}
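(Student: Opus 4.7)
The plan is to mirror the construction of \cite{groutides2024rankinselbergintegralstructureseuler} \S$3.3$--\S$3.4$ almost verbatim, substituting \Cref{cor cyclicity of I} for the cyclicity result used there. I would proceed in four steps.

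\textbf{Definition and convergence.} First I would define $\Xi_c$ by the same integral formula as in \cite{groutides2024rankinselbergintegralstructureseuler} Definition $3.3.1$: on a pure tensor $\phi\otimes\xi$ with $\phi\in\mathcal{S}(\mathbf{Q}_p^2)$ and $\xi\in C_c^\infty(G(F)/G(\mathcal{O}_F))$, the value $\Xi_c(\phi\otimes\xi)$ at $g\in G(F)$ is obtained by integrating the appropriate translate of $\phi$ by elements of $G(\mathbf{Q}_p)$ against $\xi$, modulo the stabilizer coming from the coinvariants. The only setup-dependent issue is convergence: for fixed $g$, the integrand is supported modulo $G(\mathbf{Q}_p)\cap gG(\mathcal{O}_F)g^{-1}$ on a set that must descend to a finite collection of cosets. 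Because $F/\mathbf{Q}_p$ is unramified, $G(\mathbf{Q}_p)\cap gG(\mathcal{O}_F)g^{-1}$ is an open compact subgroup of $G(\mathbf{Q}_p)$, so the defining integral reduces to a finite sum and converges. This is the only genuinely new item; everything else transports from the Rankin--Selberg case.

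\textbf{Hecke equivariance and image of the generator.} Hecke equivariance with respect to $\mathcal{H}_{G(F)}^\circ$ is formal from Fubini together with a change of variables in the defining integral, identical to the corresponding step of \cite{groutides2024rankinselbergintegralstructureseuler}. A direct computation, exploiting the $P(\mathbf{Q}_p)$-orbit decomposition of $\mathbf{Q}_p^2\setminus\{0\}$, then evaluates $\Xi_c\bigl(\ch(\mathbf{Z}_p^2)\otimes\ch(G(\mathcal{O}_F))\bigr)$ to $\ch(P(\mathbf{Q}_p)G(\mathcal{O}_F))$ with the normalizations of the reference.

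\textbf{Passage to the $\mathbf{Z}[1/p]$-integral diagram.} To complete the lower arrow of the square, one must verify that every $\phi\otimes\xi\in\mathcal{I}(G(F)/G(\mathcal{O}_F),\mathbf{Z}[1/p])$ maps into $C_c^\infty(P(\mathbf{Q}_p)\backslash G(F)/G(\mathcal{O}_F),\mathbf{Z}[1/p])$. I would follow \cite{groutides2024rankinselbergintegralstructureseuler} Proposition $3.4.2$ verbatim: integrality holds on the distinguished generator $\ch(\mathbf{Z}_p^2)\otimes \ch(G(\mathcal{O}_F))$ by the previous step; the $\mathbf{Z}[1/p]$-subalgebra $\mathcal{H}_{G(F)}^\circ(\mathbf{Z}[1/p])$ preserves the $\mathbf{Z}[1/p]$-lattice on the target; and \Cref{cor cyclicity of I} says the whole $\mathbf{Z}[1/p]$-lattice on the source is generated by this one element under $\mathcal{H}_{G(F)}^\circ(\mathbf{Z}[1/p])$. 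Combining these three facts with the Hecke equivariance of $\Xi_c$, integrality propagates from the generator to the entire lattice. This is precisely the place where \Cref{cor cyclicity of I} enters decisively.

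The main obstacle is the convergence step, since it is the only portion that truly depends on the ambient group and not on formal manipulations. Its resolution here rests on the single observation that $F/\mathbf{Q}_p$ is unramified, which makes $G(\mathbf{Q}_p)\cap gG(\mathcal{O}_F)g^{-1}$ open compact; in a ramified setting this intersection would fail to be compact and the construction would have to be genuinely modified.
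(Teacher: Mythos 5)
Your proposal follows essentially the same approach as the paper: transport the construction and Hecke equivariance verbatim from the Rankin--Selberg reference, identify convergence of the defining integral as the sole setup-dependent point (resolved by the open-compactness of $G(\mathbf{Q}_p)\cap gG(\mathcal{O}_F)g^{-1}$), and propagate $\mathbf{Z}[1/p]$-integrality from the distinguished generator using the cyclicity corollary together with the fact that the Hecke algebra preserves the integral lattice on the target. One small caveat on your closing aside: the claim that $G(\mathbf{Q}_p)\cap gG(\mathcal{O}_F)g^{-1}$ would \emph{fail to be compact} for ramified $F/\mathbf{Q}_p$ is not quite right --- topologically the intersection of a closed subgroup with an open compact conjugate is always open compact; what unramifiedness really buys is compatibility of the relevant integral structures (e.g.\ $\mathbf{Z}_p = \mathcal{O}_F\cap\mathbf{Q}_p$ behaving well with respect to the chosen normalizations of Haar measure), but this side remark does not affect the validity of your argument.
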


   We write $\delta_P$ for the modular quasi-character of the mirabolic, and \textcolor{Black}{under the same action as the one of $\mathcal{H}_{G(F)}^\circ$ on $C_c^\infty(P(\mathbf{Q}_p)\backslash G(F)/G(\mathcal{O}_F))$,} we consider the $\delta_P$-twisted $\mathcal{H}_{G(F)}^\circ$-module of coinvariants:
  $$C_c^\infty(G(F)/G(\mathcal{O}_F))_{P(\mathbf{Q}_p),\delta_P}:=C_c^\infty(G(F)/G(\mathcal{O}_F))/\left\langle x\cdot \xi -\delta_P(x)\xi\ |\ x\in P(\mathbf{Q}_p),\ \xi\in C_c^\infty(G(F)/G(\mathcal{O}_F))\right\rangle.$$

  \begin{prop}
  There exists a $\mathcal{H}_{G(F)}^\circ$-equivariant isomorphism,
 \begin{align}\label{eq: map Phi_c}
     \Phi_c:C_c^\infty(P(\mathbf{Q}_p)\backslash G(F)/G(\mathcal{O}_F))&\overset{\simeq}{\longrightarrow} C_c^\infty(G(F)/G(\mathcal{O}_F))_{P(\mathbf{Q}_p),\delta_P}\\
    \nonumber \ch(P(\mathbf{Q}_p)gG(\mathcal{O}_F))&\mapsto\frac{1}{\mathrm{vol}_{P(\mathbf{Q}_p)}(P(\mathbf{Q}_p)\cap g G(\mathcal{O}_F)g^{-1})}\ch(g G(\mathcal{O}_F))
 \end{align}
 \begin{proof}
     This follows in the same manner as \cite[Proposition $6.1.1$]{groutides2024rankinselbergintegralstructureseuler}. \textcolor{Black}{One defines an inverse morphism to $\Phi_c$ by mapping a function $\xi\in C_c^\infty(G(F)/G(\mathcal{O}_F))$ to the function $g\mapsto\int_{P(F)}\xi(xg)\ d^Rx$ where $d^Rx$ is the normalized right Haar measure on $P(F)$. This factors through the twisted coinvariants and is $\mathcal{H}_{G(F)}^\circ$-equivariant by construction.} The measure $\mathrm{vol}_{P(\mathbf{Q}_p)}$ is the normalized (left or right) Haar measure on $P(\mathbf{Q}_p)$ which gives $P(\mathbf{Z}_p)$ volume $1$ and the construction of $\Phi_c$ is independent of the choice between left or right.
 \end{proof}
 \end{prop}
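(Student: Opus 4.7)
I would follow the blueprint of [\cite{groutides2024rankinselbergintegralstructureseuler} Proposition $6.1.1$], with the only setup-specific analytic input---the compactness of $P(\mathbf{Q}_p)\cap g G(\mathcal{O}_F)g^{-1}$ inside $P(\mathbf{Q}_p)$---already in hand from the discussion in \Cref{prop Hecke equiv map}, which rests on $F/\mathbf{Q}_p$ being unramified.

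The first step is to construct an inverse candidate
\[
\Psi : C_c^\infty(G(F)/G(\mathcal{O}_F))_{P(\mathbf{Q}_p),\delta_P}\longrightarrow C_c^\infty(P(\mathbf{Q}_p)\backslash G(F)/G(\mathcal{O}_F)),\qquad \Psi(\xi)(g):=\int_{P(\mathbf{Q}_p)}\delta_P(p)^{-1}\xi(p^{-1}g)\,dp.
\]
For a generator $\xi=\ch(hG(\mathcal{O}_F))$ the integrand is either zero or supported on a coset of the open compact group $P(\mathbf{Q}_p)\cap hG(\mathcal{O}_F)h^{-1}$, so the integral converges. A change of variable $p\mapsto xp$ yields $\Psi(x\cdot\xi)=\delta_P(x)\Psi(\xi)$ for $x\in P(\mathbf{Q}_p)$, so $\Psi$ descends to the $\delta_P$-coinvariants; left $P(\mathbf{Q}_p)$-invariance and compact support modulo $P(\mathbf{Q}_p)$ of the output follow in a similar formal manner.

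Next I would compute the composition $\Psi\circ\Phi_c$ on a generator $\ch(P(\mathbf{Q}_p)gG(\mathcal{O}_F))$: a direct evaluation of the defining integral of $\Psi$ applied to $\ch(gG(\mathcal{O}_F))$ yields $\mathrm{vol}_{P(\mathbf{Q}_p)}(P(\mathbf{Q}_p)\cap gG(\mathcal{O}_F)g^{-1})\cdot\ch(P(\mathbf{Q}_p)gG(\mathcal{O}_F))$, cancelling the normalizing factor in $\Phi_c$ and recovering the original function. Hence $\Psi\circ\Phi_c=\mathrm{id}$; surjectivity of $\Phi_c$ (and thus bijectivity) then follows because the coset indicators $\ch(gG(\mathcal{O}_F))$ span the target over $\mathbf{C}$ and any such element is in the image of an appropriate $\ch(P(\mathbf{Q}_p)gG(\mathcal{O}_F))$. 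Hecke-equivariance on both sides is transparent, since every operation commutes with right translation by $G(F)$.

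The one subtlety worth flagging---and explicitly noted in the statement---is the independence from the left/right Haar convention on $P(\mathbf{Q}_p)$: since the ratio of right to left Haar on $P$ is precisely $\delta_P$, the rescaling is absorbed into the $\delta_P$-twist on the target of $\Phi_c$, making both conventions produce the same map. Well-definedness of $\Phi_c$ on representatives likewise reduces to the identity $\mathrm{vol}_{P(\mathbf{Q}_p)}(pHp^{-1})=\delta_P(p)^{\pm 1}\mathrm{vol}_{P(\mathbf{Q}_p)}(H)$, which cancels against the $\delta_P$-coinvariants relation $p\cdot\ch(gG(\mathcal{O}_F))=\delta_P(p)\ch(gG(\mathcal{O}_F))$. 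Apart from this bookkeeping, the argument is essentially formal and parallels \emph{op.cit.}
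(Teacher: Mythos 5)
Your proposal reproduces the argument the paper outsources to [\cite{groutides2024rankinselbergintegralstructureseuler} Proposition~$6.1.1$]: construct an explicit inverse by integrating $\xi\mapsto\int_{P(\mathbf{Q}_p)}\delta_P(p)^{\mp 1}\xi(p^{-1}\,\cdot\,)\,dp$ (convergent precisely because $P(\mathbf{Q}_p)\cap gG(\mathcal{O}_F)g^{-1}$ is open compact in $P(\mathbf{Q}_p)$, which uses $F/\mathbf{Q}_p$ unramified), check it descends to the $\delta_P$-coinvariants, and verify on generators that the $\vol_{P(\mathbf{Q}_p)}$ normalizations cancel. This is exactly the same route, so the proof is correct in substance; the one point to be pedantic about is that the exponent on $\delta_P$ in the formula for $\Psi$ and the direction of the translation action on $C_c^\infty(G(F)/G(\mathcal{O}_F))$ must be chosen consistently (your $\pm 1$ hedge in the final paragraph flags precisely this), since with the opposite pairing $\Psi$ would annihilate the wrong ideal and fail to descend.
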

\noindent It essentially follows from Iwasawa decomposition that 
 \begin{align}\label{eq: 17}
     G(F)=\bigcup_{\substack{a\in\mathbf{Z}\\ b\in\mathbf{Z}_{\geq0}}} P(\mathbf{Q}_p)\left[\begin{smallmatrix}
         p^a & \\
         & p^a
     \end{smallmatrix}\right]\left[\begin{smallmatrix}
         1 & \alpha p^{-b}\\
         & 1
     \end{smallmatrix}\right]G(\mathcal{O}_F)
 \end{align}
 where the union is in fact disjoint. Using the notation of \Cref{thm cyclicity of M} we had $t(a,a)=\left[\begin{smallmatrix}
         p^a & \\
         & p^a
     \end{smallmatrix}\right]$ and $n_{p^{-b}\alpha}^t= \left[\begin{smallmatrix}
         1 & \alpha p^{-b}\\
         & 1
     \end{smallmatrix}\right]$. To make matters simpler, we write 
     \begin{align*}
         \mathbf{t}_a&:=t(a,a), \ a\in\mathbf{Z}\\
         \mathbf{n}_b&:=n_{p^{-b}\alpha}^t,\  b\in\mathbf{Z}_{\geq 0}.
     \end{align*}
     The functions $\ch(P(\mathbf{Q}_p) \mathbf{t}_a\mathbf{n}_b G(\mathcal{O}_F))$ form a basis for $C_c^\infty(P(\mathbf{Q}_p)\backslash G(F)/G(\mathcal{O}_F))$ and it is not hard to calculate that
     \begin{align}\label{eq:values of Phi_c}\Phi_c\left(\ch(P(\mathbf{Q}_p) \mathbf{t}_a\mathbf{n}_b G(\mathcal{O}_F))\right)=\begin{dcases}
         \ch(\mathbf{t}_a G(\mathcal{O}_F)),\ &\mathrm{if}\ b=0\\
         (p-1)p^{b-1}\ch(\mathbf{t}_a\mathbf{n}_b G(\mathcal{O}_F)),\ &\mathrm{if}\ b\in\mathbf{Z}_{>0}.
     \end{dcases}
     \end{align}
\subsection{Local zeta integrals and Asai periods}\label{sec zeta integrals}
Let $\Pi_F$ be an unramified \textit{Whittaker type} representation of $G(F)$. In other words, $\Pi_F$ is an unramified principal-series that is not a twist of the degenerate principal-series which admits the Steinberg representation as a subquotient. For more details on these representations, we refer the reader to \cite{gel1972representation}, \cite{casselman1980unramified} and \cite{bump_1997}. 

  We fix once and for all the following additive characters: We write $\psi:\mathbf{Q}_p\rightarrow \mathbf{C}^\times$ for the standard additive character of conductor $\mathbf{Z}_p$ and set $\psi_F:F\rightarrow\mathbf{C}^\times, x\mapsto\psi(\mathrm{Tr}_{F/\mathbf{Q}_p}(x\alpha))$. Then $\psi_F$ has conductor $\mathcal{O}_F$ and is trivial on $\mathbf{Q}_p$. We will identify a representation $\Pi_F$ as above with its Whittaker model $\mathcal{W}(\Pi_F,\psi_F)$\textcolor{Black}{, which is a space of functions on $G(F)$ transforming by $\psi_F$ under left $N(F)$-translations. This space is regarded as a $G(F)$-representation under right translations}. \textcolor{Black}{We} write $W_{\Pi_F}^\mathrm{sph}$ for the normalized spherical vector (i.e. $W_{\Pi_F}^\mathrm{sph}(1)=1$).
\begin{defn}[\cite{flicker1988twisted}]\label{def Asai zeta integral}
For $\phi\in\mathcal{S}(\mathbf{Q}_p^2)$ and $W\in\mathcal{W}(\Pi_F,\psi_F)$, we set
\begin{align*}
    Z(\phi,W,s):=\int_{N(\mathbf{Q}_p)\backslash G(\mathbf{Q}_p)}W(g)\phi((0,1)g)|\det(g)|_p^s\ dg
\end{align*}
\end{defn}
\begin{rem}
    The zeta integral $Z(\phi,W,s)$ is well-defined by our choice of additive character $\psi_F$, and it is the Asai analogue (for inert primes) of the local Rankin-Selberg zeta integral of \cite{jacquet1983rankin}.
\end{rem}
\begin{prop}
\begin{enumerate}
    \item The zeta integral $Z(\phi,W,s)$ converges absolutely for $\Re(s)$ large enough (independently of $\phi$ and $W$) and admits unique meromorphic continuation as a rational function of $p^s$.
    \item As $\phi$ and $W$ vary, the $Z(\phi,W,s)$ generate the fractional ideal $\L^\mathrm{As}(\Pi_F,s)\mathbf{C}[p^s,p^{-s}]$.
    \end{enumerate}
    \begin{proof}
        This is part of \cite{flicker1993zeroes} (see also \cite[\S $2$]{matringe2009conjecturesdistinctionasailfunctions}).
    \end{proof}
\end{prop}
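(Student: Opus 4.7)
The plan is to adapt the Jacquet--Piatetski-Shapiro--Shalika strategy, in the form developed by Flicker (and refined by Matringe) for the inert Asai setting, together with an unfolding tailored to our choice of additive character. For absolute convergence, I would use the Iwasawa decomposition $G(\mathbf{Q}_p)=N(\mathbf{Q}_p)T(\mathbf{Q}_p)G(\mathbf{Z}_p)$, with $T$ the diagonal torus, to unfold the quotient integral to a product over $T(\mathbf{Q}_p)\times G(\mathbf{Z}_p)$, picking up the modular factor $\delta_B^{-1}$. The key observation is that since $\psi_F$ has conductor $\mathcal{O}_F$ and is trivial on $\mathbf{Q}_p$, any $W\in\mathcal{W}(\Pi_F,\psi_F)$ is $N(\mathbf{Z}_p)$-invariant, making this unfolding valid. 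The Schwartz condition on $\phi$ then provides compact support in one torus direction, while the standard asymptotics of unramified Whittaker functions on the torus (via Jacquet's lemma, or Casselman--Shalika applied to $G(F)$) control $W$ in the opposite direction, giving convergence for $\Re(s)$ larger than a bound depending only on the Satake parameters of $\Pi_F$.

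For meromorphic continuation, I would exploit the fact that the restriction of $W^{\mathrm{sph}}$ to the split diagonal $T(\mathbf{Q}_p)\hookrightarrow T(F)$, given by the Casselman--Shalika formula for $G(F)$, is a finite $\mathbf{C}$-linear combination of unramified characters of $T(\mathbf{Q}_p)$ weighted by symmetric functions in the Satake parameters. A general $W$ spans, under right translation by $G(\mathbf{Z}_p)$, a finite-dimensional family of such restrictions. Hence each $Z(\phi,W,s)$ decomposes into a finite sum of one-variable Tate-type integrals over $\mathbf{Q}_p^\times$ against Schwartz functions, each of which is a rational function of $p^s$; this establishes part (1).

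For part (2), the inclusion $L(\mathrm{As}\,\Pi_F,s)\in\{Z(\phi,W,s)\}\cdot\mathbf{C}[p^s,p^{-s}]$ follows directly from Flicker's unramified computation with $\phi=\ch(\mathbf{Z}_p^2)$ and $W=W^{\mathrm{sph}}$. The reverse inclusion is where I expect the main obstacle to lie: one must show that every $Z(\phi,W,s)$ is a polynomial in $p^{\pm s}$ multiple of $L(\mathrm{As}\,\Pi_F,s)$. Using the finite-dimensional description above, this reduces to controlling the possible poles of the resulting torus integrals. For $\Pi_F$ generic and non-special, the poles of the one-variable Tate integrals match those of the Asai L-factor and no surprises occur. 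The delicate case arises for certain principal series where extra poles could in principle be produced by the non-split $G(\mathbf{Q}_p)$-orbit in $\mathbf{P}^1_F$ appearing in \eqref{eq: decomp flag variety}; this is precisely the setting of the \emph{exceptional poles} of the Asai L-function, and it is here that the analysis of Matringe \cite{matringe2008distinguishedrepresentationsexceptionalpoles} becomes essential to rule out any divisibility loss.
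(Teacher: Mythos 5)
Your proposal reconstructs the standard Jacquet--Piatetski-Shapiro--Shalika/Flicker unfolding strategy, which is exactly what the cited references \cite{flicker1993zeroes} and \cite{matringe2009conjecturesdistinctionasailfunctions} carry out; the paper itself proves the proposition by citation alone, so in substance your approach matches the literature rather than diverging from it. Two small imprecisions in the sketch are worth correcting. First, what makes the integral over $N(\mathbf{Q}_p)\backslash G(\mathbf{Q}_p)$ well defined is that the integrand is left $N(\mathbf{Q}_p)$-invariant, not merely $N(\mathbf{Z}_p)$-invariant: for $n\in N(\mathbf{Q}_p)$ one has $W(ng)=\psi_F(n)W(g)=W(g)$ precisely because $\psi_F|_{\mathbf{Q}_p}$ is trivial by the choice of $\alpha$, and $(0,1)ng=(0,1)g$. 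Second, "a general $W$ spans, under right translation by $G(\mathbf{Z}_p)$, a finite-dimensional family of such restrictions" overstates matters, since $\Pi_F$ is generated by $W^{\mathrm{sph}}$ only under $G(F)$, not $G(\mathbf{Z}_p)$; the correct input is the general structure theory of Whittaker functions restricted to $T(F)$ (compact support towards $\mathrm{diag}(p^a,1)$ with $a\to -\infty$, controlled asymptotics as $a\to+\infty$ via Jacquet's lemma), which one then restricts to $T(\mathbf{Q}_p)$. Finally, your identification of the genuine difficulty in part (2) — ruling out extra poles coming from the non-split $G(\mathbf{Q}_p)$-orbit, i.e.\ the exceptional-pole analysis of Matringe — is exactly where the cited references do the nontrivial work, and deferring to them there is appropriate.
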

\begin{lem}\label{lem props of Asai integral}
    \begin{enumerate}
        \item The local Asai $L$-factor $L^\mathrm{As}(\Pi_F,s)$ is given by $L(\Pi_F,s)L(\omega_{\Pi_F},2s)$ where $L(\Pi_F,s)$ is the standard Jacquet-Langlands $L$-factor attached to $\Pi_F$ (\cite[\S $1.3$]{jacquet2006automorphic} ) and $L(\omega_{\Pi_F},2s)$ is the usual Tate $L$-factor associated to the central character of $\Pi_F$ \textcolor{Black}{denoted by $\omega_{\Pi_F}$}. 
        \item We have $Z(\ch(\mathbf{Z}_p^2),W_{\Pi_F}^\mathrm{sph},s)=L^\mathrm{As}(\Pi_F,s)$.
        \end{enumerate}
        \begin{proof}
            The first part of the proposition follows from \cite{matringe2008distinguishedrepresentationsexceptionalpoles} (see also \cite{matringe2009conjecturesdistinctionasailfunctions} for the reducible cases) keeping in mind that $\Pi_F$ is unramified of Whittaker type. The second part can be found in \cite{8b685a57-0995-30f2-97f8-c2f88420d820}.
        \end{proof}
\end{lem}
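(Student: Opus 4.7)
For part (1), the cleanest route is to cite Matringe's explicit computation of the Asai $L$-factor at unramified Whittaker-type representations in terms of Satake parameters, with \cite{matringe2008distinguishedrepresentationsexceptionalpoles} and \cite{matringe2009conjecturesdistinctionasailfunctions} covering the irreducible and reducible principal-series cases respectively. Alternatively, one can derive the identity directly at the Langlands-parameter level: realize the $4$-dimensional Asai representation as the tensor induction from $W_F$ to $W_{\mathbf{Q}_p}$ of the $2$-dimensional unramified parameter of $\Pi_F$ with Frobenius eigenvalues $a,b\in\mathbf{C}^\times$. Using $\mathrm{Frob}_p^2=\mathrm{Frob}_F$ (which holds because $F/\mathbf{Q}_p$ is unramified quadratic), an explicit matrix computation in the basis $\{e_i\otimes e_j\}$ gives the characteristic polynomial of $\mathrm{Frob}_p$ on the Asai representation as $(X-a)(X-b)(X^2-ab)$. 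The resulting Euler factor $(1-ap^{-s})^{-1}(1-bp^{-s})^{-1}(1-ab\,p^{-2s})^{-1}$ is then $L(\Pi_F,s)L(\omega_{\Pi_F},2s)$ under the conventions in force, since $\omega_{\Pi_F}(\varpi_F)=ab$.

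For part (2), my approach is the standard unramified zeta-integral computation, in the spirit of the classical Jacquet--Langlands--Shalika calculation. Apply Iwasawa decomposition $G(\mathbf{Q}_p)=N(\mathbf{Q}_p)T(\mathbf{Q}_p)G(\mathbf{Z}_p)$, with Haar measure $\delta_B(t)^{-1}dn\,dt\,dk$, where $\delta_B$ is the modular character of $B(\mathbf{Q}_p)$. The integrand is genuinely $N(\mathbf{Q}_p)$-left-invariant because $\psi_F$ is trivial on $\mathbf{Q}_p\subset F$ by construction. Both $\ch(\mathbf{Z}_p^2)$ and $W_{\Pi_F}^{\mathrm{sph}}$ are right $G(\mathbf{Z}_p)$-invariant — the Whittaker function via the inclusion $G(\mathbf{Z}_p)\subseteq G(\mathcal{O}_F)$ — so the $K$-integration contributes only a unit-volume factor, and the integral reduces to a sum over $T(\mathbf{Q}_p)\simeq \mathbf{Q}_p^\times\times\mathbf{Q}_p^\times$.

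Parametrizing $t=\mathrm{diag}(p^iu,p^jv)$ with $u,v\in\mathbf{Z}_p^\times$, the Schwartz function forces $j\geq 0$ and the support of the spherical Whittaker function on the torus forces $i\geq j$. I then invoke the Casselman--Shalika formula over $F$ applied at $\mathrm{diag}(p^{i-j},1)\in T(F)$, exploiting that $\varpi_F=p$ in the unramified case. The $F$-modular character contribution $q_F^{-(i-j)/2}=p^{-(i-j)}$ from Casselman--Shalika cancels with the $p^{i-j}$ coming from the $\mathbf{Q}_p$-Jacobian $\delta_B^{-1}$. After the substitution $(m,n)=(i-j,j)\in\mathbf{Z}_{\geq 0}^2$, the double sum decouples and a direct product-of-geometric-series identity gives
$$
\sum_{n\geq 0}(ab)^n p^{-2ns}\cdot\sum_{m\geq 0}\frac{a^{m+1}-b^{m+1}}{a-b}p^{-ms}=\frac{1}{(1-ab\,p^{-2s})(1-ap^{-s})(1-bp^{-s})},
$$
which equals $L(\mathrm{As}\ \Pi_F,s)$ by part (1).

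The main difficulty here is not conceptual but organizational: one must simultaneously track two sets of normalizations — those coming from $\mathbf{Q}_p$ (for the ambient integral and its Iwasawa Jacobian) and those coming from $F$ (for the Whittaker function and the Casselman--Shalika formula). The argument hinges on the clean cancellation between $\delta_B^{1/2}$ over $F$ and $\delta_B^{-1}$ over $\mathbf{Q}_p$, together with the fact that $\varpi_F=p$; both features are specific to the unramified setting and would complicate matters in the ramified case.
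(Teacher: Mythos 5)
Your proof is correct, and for part~(2) it takes a genuinely different route from the paper: the paper simply cites Kable \cite{8b685a57-0995-30f2-97f8-c2f88420d820}, whereas you carry out the full unramified zeta-integral computation from scratch, which is exactly what that reference does. The key bookkeeping observation you isolate --- that the factor $\delta_{B(F)}^{1/2}(\mathrm{diag}(p^{i-j},1)) = q_F^{-(i-j)/2}=p^{-(i-j)}$ from the Casselman--Shalika formula over $F$ cancels against the Iwasawa Jacobian $\delta_{B(\mathbf{Q}_p)}^{-1}(\mathrm{diag}(p^i,p^j))=p^{i-j}$ precisely because $\varpi_F=p$ --- is what makes the double sum in $(m,n)=(i-j,j)$ factor cleanly, and the generating-series identity you use is correct. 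For part~(1), your first route (citing Matringe) matches the paper; your alternative tensor-induction argument is also valid, with $(X-a)(X-b)(X^2-ab)$ indeed being the characteristic polynomial of $\mathrm{Frob}_p$ on the unramified Asai parameter (eigenvalues $a,b$ on the diagonal span fixed by the swap, eigenvalues $\pm\sqrt{ab}$ on the antidiagonal span where $\mathrm{Frob}_p$ acts as a $2\times 2$ antidiagonal block). One notational caveat worth flagging: the equality $L(\mathrm{As}\,\Pi_F,s)=L(\Pi_F,s)L(\omega_{\Pi_F},2s)$ requires interpreting $L(\Pi_F,s)$ and the Tate factor with the variable $p^{-s}$ rather than $q_F^{-s}$, as the paper does implicitly (compare with the polynomial $\mathcal{P}_{F,\mathrm{As}}(X)$ evaluated at $X=p^{-s}$ in Lemma~\ref{lem Asai Euler factor}); you acknowledge this with ``under the conventions in force'', which is the honest thing to say, but it is worth making explicit since the naive reading of ``standard Jacquet--Langlands $L$-factor over $F$'' would insert an extra factor of two in the exponent of $p$. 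The trade-off between the two approaches is the usual one: the citations are shorter, while your direct computation makes the normalizations transparent, which is helpful given that the entire paper hinges on tracking exactly these normalizations at the integral level.
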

\noindent It will be very convenient for us to also define a secondary zeta integral which also appears in \cite[\S $2$]{matringe2009conjecturesdistinctionasailfunctions} and \cite[\S $2.2$]{grossi2020norm}.
\begin{defn}
    For $W\in\mathcal{W}(\Pi_F,\psi_F)$ we set
    \begin{align*}
        \Psi(W,s):=\int_{\mathbf{Q}_p^\times} W\left(\left[\begin{smallmatrix}
            x & \\
            & 1
        \end{smallmatrix}\right]\right)|x|_p^{s-1}\ d^\times x
    \end{align*}
    where $d^\times x$ is the normalized Haar measure on $\mathbf{Q}_p^\times$ which gives $\mathbf{Z}_p^\times$ volume $1$.
\end{defn}
\noindent By a standard argument, this also converges absolutely (independently of $W$) for $\Re(s)$ large enough, and as we vary $W$, $\Psi(W,s)$ is still an element of the fractional ideal $L^\mathrm{As}(\Pi_F,s)\mathbf{C}[p^s,p^{-s}]$. Additionally, as in \cite[\S $2.2$]{grossi2020norm}, we have 
\begin{align}\label{eq: L on spherical vector}\Psi(W_{\Pi_F}^\mathrm{sph},s)=\frac{L^\mathrm{As}(\Pi_F,s)}{L(\omega_{\Pi_F},2s)}.
\end{align}

\begin{defn}\label{def periods}Let $\Pi_F$ be an unramified Whittaker type representation of $G(F)$.
\begin{enumerate} 
    \item For $\phi\in\mathcal{S}(\mathbf{Q}_p^2)$ and $W\in\Pi_F$, we set
    $$\mathcal{Z}(\phi\otimes W):=\lim_{s\rightarrow 0} \frac{Z(\phi,W,s)}{L^\mathrm{As}(\Pi_F,s)}.$$
    \item For $W\in\Pi_F$, we set
    $$ \mathscr{L}(W):= \lim_{s\rightarrow 0} \frac{\Psi(W,s)}{L^\mathrm{As}(\Pi_F,s)}.$$
\end{enumerate}
\end{defn}

\begin{lem}\label{lem non-zero Asai periods}Let $\Pi_F$ be an unramified Whittaker type representation of $G(F)$.
    \begin{enumerate}
        \item The linear form $\mathcal{Z}$ gives a non-zero period in $\mathrm{Hom}_{G(\mathbf{Q}_p)}(\mathcal{S}(\mathbf{Q}_p^2)\otimes \Pi_F,\mathbf{1})$ which takes the value $1$ on the unramified vector $\ch(\mathbf{Z}_p^2)\otimes W_{\Pi_F}^\mathrm{sph}.$
        \item Suppose that $\Pi_F$ has non-trivial central character. Then the linear form $\mathscr{L}$ gives a non-zero period in $\mathrm{Hom}_{P(\mathbf{Q}_p)}(\Pi_F,\delta_P)$ which takes the value $L(\omega_{\Pi_F},0)^{-1}$ on the unramified vector $W_{\Pi_F}^\mathrm{sph}$.
    \end{enumerate}
    \begin{proof}
        This follows from a straightforward computation using \eqref{eq: L on spherical vector} and \Cref{lem props of Asai integral}. See also \cite[\S $2$]{matringe2008distinguishedrepresentationsexceptionalpoles}.
    \end{proof}
\end{lem}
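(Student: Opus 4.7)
The plan is to establish both parts by working directly with the defining zeta integrals, exploiting the fact that the additive character $\psi_F$ is trivial on $\mathbf{Q}_p$ (which simultaneously makes the integrals well-defined on the quotients that appear and makes the relevant group actions behave predictably).

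For (1), the value on the unramified vector is immediate: by \Cref{lem props of Asai integral}(2) we have $Z(\ch(\mathbf{Z}_p^2),W_{\Pi_F}^{\mathrm{sph}},s) = L(\mathrm{As}\ \Pi_F, s)$, so the normalized ratio is identically $1$. For $G(\mathbf{Q}_p)$-equivariance, fix $g_0\in G(\mathbf{Q}_p)$ and, working in the region of absolute convergence, perform a change of variables $g\mapsto gg_0^{-1}$ in the integral defining $Z(g_0\cdot\phi,\, g_0\cdot W,\, s)$. This is legitimate because $dg$ is bi-invariant on $G(\mathbf{Q}_p)$ and descends to a right-invariant measure on $N(\mathbf{Q}_p)\backslash G(\mathbf{Q}_p)$; it produces the identity $Z(g_0\cdot\phi,g_0\cdot W,s) = |\det g_0|_p^{-s}\, Z(\phi,W,s)$. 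Meromorphic continuation propagates this to all $s$; after dividing by the $g_0$-independent factor $L(\mathrm{As}\ \Pi_F,s)$ and letting $s\to 0$, the scalar $|\det g_0|_p^{-s}$ tends to $1$, yielding the desired invariance $\mathcal{Z}(g_0\cdot(\phi\otimes W)) = \mathcal{Z}(\phi\otimes W)$. Non-vanishing is clear from the value $1$ on the normalized spherical vector.

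For (2), the value on the spherical vector can be read off from \eqref{eq: L on spherical vector}: $\mathscr{L}(W_{\Pi_F}^{\mathrm{sph}}) = \lim_{s\to 0} L(\omega_{\Pi_F},2s)^{-1} = L(\omega_{\Pi_F},0)^{-1}$. The hypothesis that $\omega_{\Pi_F}$ is non-trivial, combined with the fact that it is unramified, forces $\omega_{\Pi_F}(p)\neq 1$, so $L(\omega_{\Pi_F},0)=(1-\omega_{\Pi_F}(p))^{-1}$ is finite and non-zero; this simultaneously makes the limit meaningful and gives non-vanishing of $\mathscr{L}$. For the $(P(\mathbf{Q}_p),\delta_P)$-semi-invariance, decompose an element of $P(\mathbf{Q}_p)$ as a unipotent element times the torus element $\diag(a,1)$. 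The unipotent part acts trivially on $\Psi(\cdot,s)$: since $W\in\mathcal{W}(\Pi_F,\psi_F)$ transforms under $N(F)$ by $\psi_F$, which is trivial on $N(\mathbf{Q}_p)$, the relevant twist is $1$. For the torus part, the change of variables $x\mapsto x/a$ in $\Psi(\diag(a,1)\cdot W, s)$ produces a factor of $|a|_p^{1-s}$; specializing to $s=0$ reproduces exactly the modular character value $\delta_P(\diag(a,1))=|a|_p$.

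The main technical subtlety to flag is the justification of taking the limit $s\to 0$ and commuting it with the group action. This is handled uniformly by the fact, already used in the preceding discussion, that both $Z(\phi,W,s)$ and $\Psi(W,s)$ extend to rational functions in $p^s$ belonging to the fractional ideal generated by $L(\mathrm{As}\ \Pi_F,s)$. Consequently the normalized ratios are rational functions of $p^s$ with unambiguous values at $s=0$ whenever finite, which legitimizes all the limit manipulations used in the two parts.
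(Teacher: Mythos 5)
Your proof is correct and carries out exactly the ``straightforward computation'' that the paper's terse proof alludes to, deriving the spherical values from \eqref{eq: L on spherical vector} and \Cref{lem props of Asai integral}, and checking the $G(\mathbf{Q}_p)$-invariance (respectively $(P(\mathbf{Q}_p),\delta_P)$-equivariance) by change of variables using that $\psi_F$ is trivial on $\mathbf{Q}_p$. The only detail worth making explicit in the unipotent step of part (2) is the conjugation $\mathrm{diag}(x,1)n = n'\,\mathrm{diag}(x,1)$ moving the right-translated unipotent element to the left before invoking the Whittaker transformation law, but your argument is otherwise complete and matches the intended route.
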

\noindent We can now deduce that the module $\mathcal{I}(G(F)/G(\mathcal{O}_F))$ which was shown to be cyclic over $\mathcal{H}_{G(F)}^\circ$ in \Cref{thm cyclicity of M}, is in fact \textit{free}, and the Hecke equivariant map $\Xi_c$ of \Cref{prop Hecke equiv map} is injective. This follows from \Cref{lem non-zero Asai periods}(1), \cite[Proposition $4.4.1$]{Loeffler_2021} \textcolor{Black}{(whose proof is formal and the statement remains unchanged when} translated to this setup), and the fact that the family of representations $\Pi_F$ considered here is dense in $\mathrm{Spec}(\mathcal{H}_{G(F)}^\circ)$. \textcolor{Black}{We provide a few more details on how to deduce freeness. Firstly, given a representation $\Pi_F$ as in \Cref{lem non-zero Asai periods}, we write $\Theta_{\Pi_F}:\mathcal{H}_{G(F)}^\circ\rightarrow\mathbf{C}$ for its spherical Hecke eigensystem. Suppose that $\mathcal{P}\cdot\left(\ch(\mathbf{Z}_p^2)\otimes \ch(G(\mathcal{O}_F))\right)=0$ in $\mathcal{I}(G(F)/G(\mathcal{O}_F))$ for some Hecke operator $\mathcal{P}\in\mathcal{H}_{G(F)}^\circ$. By \cite[Proposition $4.4.1$]{Loeffler_2021} and \Cref{lem non-zero Asai periods} we have $0=\Theta_{\Pi_F}(\mathcal{P}^{'})\cdot \mathcal{Z}(\ch(\mathbf{Z}_p^2)\otimes W_{\Pi_F}^\mathrm{sph})=\Theta_{\Pi_F}(\mathcal{P}^{'})$. This holds for all such $\Pi_F$ and thus by density, $\mathcal{P}^{'}$ is identically zero. Recall that $\mathcal{P}^{'}$ is defined by $\mathcal{P}^{'}(g)=\mathcal{P}(g^{-1})$, and so $\mathcal{P}$ is also identically zero. This shows that $\mathcal{I}(G(F)/G(\mathcal{O}_F))$ has no annihilator under the Hecke action and hence it is free of rank one}. Thus, it is natural to make the following definition:
\begin{defn}
    Let $\delta\in\mathcal{I}(G(F)/G(\mathcal{O}_F))$. The Hecke operator $\mathcal{P}_\delta$ is the unique element of the spherical Hecke algebra $\mathcal{H}_{G(F)}^\circ$ which satisfies
    $$\mathcal{P}_\delta\cdot \left(\ch(\mathbf{Z}_p^2)\otimes \ch(G(\mathcal{O}_F))\right)=\delta.$$
\end{defn}
\begin{rem}
    Hecke operators of this form (with Frobenius twists) for integral elements $\delta$ with determinant level, will turn out to be precisely the local factors appearing in tame norm relations between motivic Asai-Flach classes, at inert primes $p$ in $E$.
\end{rem}
\begin{lem}\label{lem Asai Euler factor}
    There exists a polynomial $\mathcal{P}_{F,\mathrm{As}}(X)$ in $\mathcal{H}_{G(F)}^\circ(\mathbf{Z}[1/p])[X]$ such that
    $$\Theta_{\Pi_F}\left(\mathcal{P}_{F,\mathrm{As}}\right)(p^{-s})=L^\mathrm{As}(\Pi_F,s)^{-1}$$
    for every unramified Whittaker type $\Pi_F$. \textcolor{Black}{Once again, $\Theta_{\Pi_F}$ denotes the spherical Hecke eigensystem of $\Pi_F$.}
    \begin{proof}
        The polynomial is given explicitly by 
        $
            \mathcal{P}_{F,\mathrm{As}}(X)=\left(1-\tfrac{1}{p}\mathcal{T}_FX+\mathcal{S}_FX^2\right)\left(1-\mathcal{S}_FX^2\right)
        $
where $\mathcal{T}_F:=\ch(G(\mathcal{O}_F)\left[\begin{smallmatrix}
   p  & \\
     & 1
\end{smallmatrix}\right] G(\mathcal{O}_F))$ and $\mathcal{S}_F:=\ch(\left[\begin{smallmatrix}
    p & \\
    & p
\end{smallmatrix}\right] G(\mathcal{O}_F))$. The result then follows by a direct computation using well-known formulas for the spherical Hecke eigenvalues of these two operators.
    \end{proof}
\end{lem}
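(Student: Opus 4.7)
The polynomial is already exhibited explicitly in the statement, so the task reduces to verifying the identity
$$\Theta_{\Pi_F}(\mathcal{P}_{F,\mathrm{As}})(p^{-s}) = L(\mathrm{As}\ \Pi_F,s)^{-1}$$
for every unramified Whittaker-type $\Pi_F$. The plan is to combine the factorization of \Cref{lem props of Asai integral}(1) with two classical spherical Hecke eigenvalue computations for $\mathcal{H}_{G(F)}^\circ$, and to read off the identity by substitution.

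First, I would parametrize $\Pi_F = \mathrm{Ind}_{B(F)}^{G(F)}(\chi_1\boxtimes\chi_2)$ as an unramified principal series with Satake parameters $\alpha_i:=\chi_i(\varpi_F)$, normalized compatibly with the $p^{-s}$-variable of \Cref{def Asai zeta integral}; this normalization is precisely the one dictated by the Casselman-Shalika expansion of $\Psi(W_{\Pi_F}^{\mathrm{sph}},s)$ on the diagonal matrices $\mathrm{diag}(p^n,1)$ underlying \eqref{eq: L on spherical vector}. With this in hand, \Cref{lem props of Asai integral}(1) together with the usual shape of the Tate L-factor attached to the central character $\omega_{\Pi_F}$ yields the explicit factorization
$$L(\mathrm{As}\ \Pi_F,s)^{-1} = \bigl(1-(\alpha_1+\alpha_2)p^{-s}+\alpha_1\alpha_2\, p^{-2s}\bigr)\bigl(1-\alpha_1\alpha_2\, p^{-2s}\bigr),$$
where the two bracketed factors equal $L(\Pi_F,s)^{-1}$ and $L(\omega_{\Pi_F},2s)^{-1}$ respectively, using $\omega_{\Pi_F}(p)=\alpha_1\alpha_2$.

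Next, I would compute the two relevant Hecke eigenvalues directly. The operator $\mathcal{S}_F = \ch\bigl(\mathrm{diag}(p,p)G(\mathcal{O}_F)\bigr)$ is the characteristic function of a single coset of a central element, so it acts on $W_{\Pi_F}^{\mathrm{sph}}$ by the central character, giving $\Theta_{\Pi_F}(\mathcal{S}_F) = \omega_{\Pi_F}(p) = \alpha_1\alpha_2$. For $\mathcal{T}_F = \ch\bigl(G(\mathcal{O}_F)\mathrm{diag}(\varpi_F,1)G(\mathcal{O}_F)\bigr)$, decomposing the double coset into its $q_F+1$ left single cosets and invoking the Whittaker property of $W_{\Pi_F}^{\mathrm{sph}}$ under the Iwasawa decomposition gives, in the chosen convention, $\Theta_{\Pi_F}(\tfrac{1}{p}\mathcal{T}_F) = \alpha_1+\alpha_2$. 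The explicit $\tfrac{1}{p}$-prefactor in the definition of $\mathcal{P}_{F,\mathrm{As}}$ absorbs precisely the $q_F^{1/2}=p$-normalization inherent in the Satake isomorphism; this is also the reason why $\mathcal{P}_{F,\mathrm{As}}$ is required to lie in $\mathcal{H}_{G(F)}^\circ(\mathbf{Z}[1/p])[X]$ and not in the integral Hecke algebra.

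Substituting these two eigenvalues into
$$\mathcal{P}_{F,\mathrm{As}}(p^{-s}) = \bigl(1-\tfrac{1}{p}\mathcal{T}_F\, p^{-s}+\mathcal{S}_F\, p^{-2s}\bigr)\bigl(1-\mathcal{S}_F\, p^{-2s}\bigr)$$
reproduces the factorization of $L(\mathrm{As}\ \Pi_F,s)^{-1}$ displayed above, concluding the proof. There is no substantive obstacle beyond careful bookkeeping of conventions: the entire argument reduces to standard Satake theory for $\GL_2(F)$, and once the normalization of Satake parameters is pinned down compatibly with \eqref{eq: L on spherical vector}, the verification is purely mechanical.
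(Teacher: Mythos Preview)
Your proposal is correct and follows essentially the same approach as the paper: exhibit the explicit polynomial, invoke the factorization $L(\mathrm{As}\ \Pi_F,s)=L(\Pi_F,s)L(\omega_{\Pi_F},2s)$ from \Cref{lem props of Asai integral}(1), and match each factor against the standard spherical Hecke eigenvalue formulas $\Theta_{\Pi_F}(\mathcal{T}_F)=q_F^{1/2}(\alpha_1+\alpha_2)=p(\alpha_1+\alpha_2)$ and $\Theta_{\Pi_F}(\mathcal{S}_F)=\omega_{\Pi_F}(p)=\alpha_1\alpha_2$. The paper's proof is simply the one-line summary of exactly this computation.
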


\subsubsection{Back to Hecke modules}\label{sec back to hecke modules} For each unramified Whittaker type representation $\Pi_F$ of $G(F)$ with non-trivial central character, we can define a linear form
\begin{align}\label{eq: map Lambda}
\Lambda_{\Pi_F}:C_c^\infty(G(F)/G(\mathcal{O}_F))_{P(\mathbf{Q}_p),\delta_P}\longrightarrow \mathbf{C},\ \  \Lambda_{\Pi_F}(\xi):=\mathscr{L}\left(\xi\cdot W_{\Pi_F}^\mathrm{sph}\right).
\end{align}
By \Cref{lem non-zero Asai periods}(2) this is well-defined, non-zero, and satisfies $\Lambda_{\Pi_F}(\ch(G(\mathcal{O}_F)))=L(\omega_{\Pi_F},0)^{-1}.$ Our goal is to now obtain explicit formulas in terms of Satake parameters and local Asai $L$-factors, for the forms $\Lambda_{\Pi_F}$ evaluated on elements as in \eqref{eq:values of Phi_c}: It suffices to evaluate $\Psi(\mathbf{t}_a\mathbf{n}_bW_{\Pi_F}^\mathrm{sph},s)$ for $a\in\mathbf{Z}$ and $b\in\mathbf{Z}_{\geq 0}$. Plugging everything in and using the Whittaker model $\mathcal{W}(\Pi_F,\psi_F)$, we get
\begin{align}\label{eq: explicit formula for psi}
    \Psi(\mathbf{t}_a\mathbf{n}_bW_{\Pi_F}^\mathrm{sph},s)&=\omega_{\Pi_p}(p)^a\int_{\mathbf{Q}_p^\times}\psi_F\left(xp^{-b}\alpha\right) W_{\Pi_F}^\mathrm{sph}\left(\left[\begin{smallmatrix}
        x & \\
        & 1
    \end{smallmatrix}\right]\right)|x|_p^{s-1}\ d^\times x\\
   \nonumber &=\omega_{\Pi_F}(p)^a\sum_{n\in\mathbf{Z}_{\geq 0}}\left(\int_{p^n\mathbf{Z}_p^\times}\psi_F\left(xp^{-b}\alpha\right)\ d^\times x\right)W_{\Pi_F}^\mathrm{sph}\left(\left[\begin{smallmatrix}
        p^n & \\
        & 1
    \end{smallmatrix}\right]\right)p^{n(1-s)}
\end{align}
The additive character $\psi_F((-)\alpha)$ restricted to $\mathbf{Q}_p$ is of conductor $\mathbf{Z}_p$ by construction. Thus, we can apply the same treatment to \eqref{eq: explicit formula for psi} as that of \cite[\S $6.3.1$]{groutides2024integral} \textcolor{Black}{and split the above expression into an $L$-factor term and a finite ``error'' term}. After some manipulations, and using \cite{shintani1976explicit} and \Cref{lem non-zero Asai periods}($2$), we see that \eqref{eq: explicit formula for psi} is given by
\begin{align}\label{eq: 22}
\Psi(\mathbf{t}_a\mathbf{n}_bW_{\Pi_F}^\mathrm{sph},s)=\omega_{\Pi_F}(p)^a\left(\sum_{n=0}^{b-1}\epsilon_n(b)\mathfrak{s}_n\left(\alpha_{\Pi_F},\beta_{\Pi_F}\right)p^{-ns}\right)+\omega_{\Pi_F}(p)^a\frac{L^\mathrm{As}(\Pi_F,s)}{L(\omega_{\Pi_F},2s)}.
\end{align}
where the sum is taken to be zero by convention if $b=0$. The notation is that of \cite{groutides2024rankinselbergintegralstructureseuler}:
\begin{align*}
    \epsilon_n(b):=\begin{dcases}
        -1,\ &\mathrm{if}\ 0\leq n<b-1\\
        \tfrac{-p}{p-1},\ &\mathrm{if}\ n=b
    \end{dcases}\ \ , \ \ \mathfrak{s}_n(x,y):=\frac{x^{n+1}-y^{n+1}}{x-y}
\end{align*}
and $\alpha_{\Pi_F},\beta_{\Pi_F}$ denote the Satake parameters of $\Pi_F$. Putting a few more things together, we obtain:
\begin{prop}\label{prop explicit Lambda}
Let $\Pi_F$ be unramified of Whittaker type with non-trivial central character. The linear form $\Lambda_{\Pi_F}$ is given explicitly by
\begin{align*}
\Lambda_{\Pi_F}\left(\ch(\mathbf{t}_a\mathbf{n}_bG(\mathcal{O}_F))\right)=\Theta_{\Pi_F}\left(\mathcal{S}_F^a\left(\sum_{n=0}^{b-1}\epsilon_n(b)\mathfrak{s}_n^\circ(\mathcal{S}_F,\mathcal{T}_F)\right)\mathcal{P}_{F,\mathrm{As}}(1) +\mathcal{S}_F^a(1-\mathcal{S}_F)\right).
\end{align*}
where $\mathfrak{s}_n^\circ(X,Y)\in\mathcal{H}_{G(F)}^\circ(\mathbf{Z}[1/p])[X,Y]$ is uniquely determined by imposing the condition $\Theta_{\Pi_F}(\mathfrak{s}_n^\circ(\mathcal{S}_F,\mathcal{T}_F))=\mathfrak{s}_n\left(\alpha_{\Pi_F},\beta_{\Pi_F}\right)$ for every such $\Pi_F$. Again the sum is taken to be zero by convention whenever $b=0$.
\begin{proof}
    From \eqref{eq: 22}, we immediately obtain an expression for $\mathscr{L}(\mathbf{t}_a\mathbf{n}_b W_{\Pi_F}^\mathrm{sph})$. \textcolor{Black}{The polynomials $\mathfrak{s}_n^\circ(X,Y)$ clearly exist and satisfy the property mentioned, since the expressions $\mathfrak{s}_n(\alpha_{\Pi_F},\beta_{\Pi_F})$ are symmetric polynomials in the Satake parameters, and $\Theta_{\Pi_F}(\mathcal{T}_F)=p(\alpha_{\Pi_F}+\beta_{\Pi_F}),\Theta_{\Pi_F}(\mathcal{S}_F)=\alpha_{\Pi_F}\beta_{\Pi_F}$.} Using \Cref{lem Asai Euler factor} and pulling back to the Hecke algebra through the spherical Hecke eigensystem $\Theta_{\Pi_F}$, we obtain the result.
\end{proof}
\end{prop}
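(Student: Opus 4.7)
The plan is to unfold both sides directly and match them through the ring homomorphism $\Theta_{\Pi_F}$, reducing the statement to bookkeeping on top of the explicit formula \eqref{eq: 22}. First, by the definition \eqref{eq: map Lambda} of $\Lambda_{\Pi_F}$ together with Definition \ref{def periods}(2),
$$\Lambda_{\Pi_F}\bigl(\ch(\mathbf{t}_a\mathbf{n}_b G(\mathcal{O}_F))\bigr) = \mathscr{L}(\mathbf{t}_a\mathbf{n}_b W_{\Pi_F}^\mathrm{sph}) = \lim_{s\to 0} \frac{\Psi(\mathbf{t}_a\mathbf{n}_b W_{\Pi_F}^\mathrm{sph}, s)}{L(\mathrm{As}\ \Pi_F, s)},$$
where the first equality uses that the characteristic function of a single right $G(\mathcal{O}_F)$-coset of volume $1$ acts by right translation on $W_{\Pi_F}^\mathrm{sph}$.

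Next I would substitute the explicit expression \eqref{eq: 22}, which naturally splits the numerator into two pieces. After dividing by $L(\mathrm{As}\ \Pi_F, s)$, the polynomial part $\omega_{\Pi_F}(p)^a \sum_{n=0}^{b-1}\epsilon_n(b)\mathfrak{s}_n(\alpha_{\Pi_F},\beta_{\Pi_F}) p^{-ns}$ is multiplied by the regular function $L(\mathrm{As}\ \Pi_F, s)^{-1}$; evaluating at $s=0$ gives $\omega_{\Pi_F}(p)^a \bigl(\sum_n \epsilon_n(b)\mathfrak{s}_n(\alpha_{\Pi_F},\beta_{\Pi_F})\bigr)\cdot L(\mathrm{As}\ \Pi_F, 0)^{-1}$. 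The second piece $\omega_{\Pi_F}(p)^a L(\mathrm{As}\ \Pi_F, s)/L(\omega_{\Pi_F}, 2s)$ collapses to $\omega_{\Pi_F}(p)^a / L(\omega_{\Pi_F}, 2s)$, which at $s=0$ is $\omega_{\Pi_F}(p)^a\bigl(1-\omega_{\Pi_F}(p)\bigr)$.

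The last step is to lift each scalar back through $\Theta_{\Pi_F}$: since $\mathcal{S}_F$ is the characteristic function of the central coset $\mathrm{diag}(p,p)G(\mathcal{O}_F)$, one has $\Theta_{\Pi_F}(\mathcal{S}_F^a) = \omega_{\Pi_F}(p)^a$; by construction $\Theta_{\Pi_F}(\mathfrak{s}_n^\circ(\mathcal{S}_F,\mathcal{T}_F)) = \mathfrak{s}_n(\alpha_{\Pi_F},\beta_{\Pi_F})$; Lemma \ref{lem Asai Euler factor} specialized at $p^{-s}=1$ gives $\Theta_{\Pi_F}(\mathcal{P}_{F,\mathrm{As}}(1)) = L(\mathrm{As}\ \Pi_F, 0)^{-1}$; and finally $\Theta_{\Pi_F}(1-\mathcal{S}_F) = 1-\omega_{\Pi_F}(p) = L(\omega_{\Pi_F}, 0)^{-1}$. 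Combining these pieces via the ring homomorphism $\Theta_{\Pi_F}$ reassembles precisely the formula in the statement.

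I do not anticipate any substantive obstacle: all the genuine analytic content has been packaged into \eqref{eq: 22}, and the proposition is really a restatement of that identity inside $\mathrm{Spec}(\mathcal{H}_{G(F)}^\circ)$. The only points meriting a brief check are that the first ratio is genuinely regular at $s=0$ (so the limit is plain substitution, which holds since $L(\mathrm{As}\ \Pi_F, s)$ has no zero there for $\Pi_F$ unramified of Whittaker type with non-trivial central character), and that the normalization conventions for the Hecke action on $W_{\Pi_F}^\mathrm{sph}$ are compatible with the identity $\Theta_{\Pi_F}(\mathcal{S}_F) = \omega_{\Pi_F}(p)$ without a sign twist.
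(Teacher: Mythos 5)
Your argument is correct and is exactly the paper's (one-sentence) proof, expanded: substitute \eqref{eq: 22} into the definition of $\mathscr{L}$, divide by $L(\mathrm{As}\ \Pi_F,s)$, set $s=0$, and pull each resulting scalar back along $\Theta_{\Pi_F}$ using \Cref{lem Asai Euler factor}, the defining property of $\mathfrak{s}_n^\circ$, and $\Theta_{\Pi_F}(\mathcal{S}_F)=\omega_{\Pi_F}(p)$. One minor aside: regularity of the ratio at $s=0$ is automatic because $L(\mathrm{As}\ \Pi_F,s)^{-1}$ is itself a polynomial in $p^{\pm s}$ (an $L$-factor of this form has no zeros anywhere), so the appeal to absence of zeros under the non-triviality hypothesis is not quite the relevant point, though harmless.
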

\subsection{Local factors}\label{sec local factors} We now have all the required components to prove the analogue of \cite[Theorem $5.1.1$]{groutides2024rankinselbergintegralstructureseuler}. However, we will not be concerned with equivariant maps here since that part remains practically unchanged. For more details on that, see \textit{op.cit} and \cite[\S $3$]{Loeffler_2021}. \textcolor{Black}{Before stating the main result of this section, we once again recall that $\mathcal{P}^{'}(g):=\mathcal{P}(g^{-1})$ for any spherical Hecke operator $\mathcal{P}$, the open compact determinant level subgroup $G(\mathcal{O}_F)[p]$ defined at the beginning of \Cref{sec local results}, and the trace map $\mathrm{Tr}=\mathrm{Tr}^{G(\mathcal{O}_F)[p]}_{G(\mathcal{O}_F)}$ described at the end of \Cref{sec general notation}.}
 
\begin{thm}\label{thm integral local factors}
    \begin{enumerate}
        \item Let $\delta$ be an element of the integral lattice $\mathcal{I}(G(F)/G(\mathcal{O}_F),\mathbf{Z}[1/p])$. Then, the corresponding local factor $\mathcal{P}_\delta$ lies in $\mathcal{H}_{G(F)}^\circ(\mathbf{Z}[1/p])$.
        \item Let $\delta$ be an element of the determinant level integral lattice $\mathcal{I}_0(G(F)/G(\mathcal{O}_F)[p],\mathbf{Z}[1/p])$. Then, the corresponding local factor $\mathcal{P}_{\mathrm{Tr}(\delta)}$ lies in the ideal
        $$\left\langle (p-1)(1-\mathcal{S}_F),\mathcal{P}_{F,\mathrm{As}}^{'}(1)\right\rangle\subseteq\mathcal{H}_{G(F)}^\circ(\mathbf{Z}[1/p]).$$
        \item Finally, if $\delta$ is an element of the determinant level integral lattice $\mathcal{I}(G(F)/G(\mathcal{O}_F)[p],\mathbf{Z}[1/p])$, then corresponding local factor $\mathcal{P}_{\mathrm{Tr}(\delta)}$ lies in the ideal
        $$\left\langle p-1, \mathcal{P}_F^{'}(1)\right\rangle\subseteq\mathcal{H}_{G(F)}^\circ(\mathbf{Z}[1/p]).$$
        where $\mathcal{P}_F(X)\in\mathcal{H}_{G(F)}^\circ(\mathbf{Z}[1/p])[X]$ is the polynomial interpolating the usual Jacquet-Langlands local $L$-factors of unramified principal-series of $G(F)$, i.e. $\mathcal{P}_F(1)=\frac{\mathcal{P}_{F,\mathrm{As}}(1)}{1-\mathcal{S}_F}$.
    \end{enumerate}
    \begin{proof}
        We once again follow the approach in \cite{groutides2024rankinselbergintegralstructureseuler}. Let $\delta$ be as in the first part. We know from \Cref{prop Hecke equiv map} that the image of $\delta$ under $\Xi_c$ is $\mathbf{Z}[1/p]$-valued. It is thus a finite $\mathbf{Z}[1/p]$-linear combination of characteristic functions
        \begin{align}
    \ch(P(\mathbf{Q}_p)\mathbf{t}_a\mathbf{n}_b G(\mathcal{O}_F)),\  a\in\mathbf{Z},\ b\in\mathbf{Z}_{\geq 0}.
        \end{align}
        Further applying $\Phi_c$ and using \eqref{eq:values of Phi_c}, this becomes a finite $\mathbf{Z}[1/p]$ linear combination of elements of the form 
        $$\begin{dcases}
            \ch(\mathbf{t}_a G(\mathcal{O}_F)),\ &\mathrm{if}\ b=0\\
            (p-1)p^{b-1}\ch(\mathbf{t}_a\mathbf{n}_b G(\mathcal{O}_F)),\ &\mathrm{if}\ b>0.
        \end{dcases}$$
        Let $\Pi_F$ be an unramified Whittaker type representation of $G(F)$. Applying the linear form $\Lambda_{\Pi_F}$ constructed in \Cref{sec back to hecke modules}, we see that $\Lambda_{\Pi_F}\left(\Phi_c\left(\Xi_c(\delta)\right)\right)$ is given by a finite $\mathbf{Z}[1/p]$-linear combination of elements of the form
         $$\begin{dcases}
            \Lambda_{\Pi_F}\left(\ch(\mathbf{t}_a G(\mathcal{O}_F))\right),\ &\mathrm{if}\ b=0\\
            \Lambda_{\Pi_F}\left((p-1)p^{b-1}\ch(\mathbf{t}_a\mathbf{n}_b G(\mathcal{O}_F))\right),\ &\mathrm{if}\ b>0.
        \end{dcases}$$
        By \Cref{prop explicit Lambda}, each such term is also given by evaluating the spherical Hecke eigensystem $\Theta_{\Pi_F}$ of $\Pi_F$ at a specific Hecke operator independent of $\Pi_F$. Looking at the explicit expression of this Hecke operator in \Cref{prop explicit Lambda} and the values of the constants $\epsilon_n(b)$, we see that $\Lambda_{\Pi_F}\left(\Phi_c\left(\Xi_c(\delta)\right)\right)$ is given by evaluating $\Theta_{\Pi_F}$ on an integral Hecke operator, say $\mathcal{Q}\in \mathcal{H}_{G(F)}^\circ(\mathbf{Z}[1/p])$ (again independently of $\Pi_F$). On the other hand we have
    \begin{align}\label{eq: 24}
\Theta_{\Pi_F}(\mathcal{Q})&=\Lambda_{\Pi_F}\left(\Phi_c\left(\Xi_c(\delta)\right)\right)\\
\nonumber&=\Lambda_{\Pi_F}\left(\mathcal{P}_\delta\cdot\ch(G(\mathcal{O}_F))\right)\\
\nonumber&=\Lambda_{\Pi_F}\left(\mathcal{P}_\delta^{'}\right)\\
\nonumber&=\Theta_{\Pi_F}\left(\mathcal{P}_\delta^{'}(1-\mathcal{S}_F)\right).
    \end{align}
    where the second equality follows from \textcolor{Black}{the definition of $\mathcal{P}_\delta$} and Hecke equivariance of $\Xi_c$ and $\Phi_c$, the third equality follows from \textcolor{Black}{the} direct computation \textcolor{Black}{$$(\mathcal{P}_\delta\cdot\ch(G(\mathcal{O}_F)))(x)=\int_{G(F)}\mathcal{P}_\delta(g)\ch(G(\mathcal{O}_F))(xg)\ dg=\int_{G(F)}\ch(G(\mathcal{O}_F))(g)\mathcal{P}_{\delta}^{'}(g^{-1}x)\ dg=\mathcal{P}_\delta^{'}(x)$$}and the last equality follows from \eqref{eq: map Lambda} \textcolor{Black}{and \Cref{lem non-zero Asai periods}}. By the usual density argument for the family of $\Pi_F$ in the spectrum of the Hecke algebra, we conclude that $\mathcal{P}_\delta^{'}(1-\mathcal{S}_F)\in\mathcal{H}_{G(F)}^\circ(\mathbf{Z}[1/p])$ and thus so is $\mathcal{P}_\delta$. This concludes the proof of the first part. For the other two parts, as in \cite[Theorem $5.1.1$]{groutides2024rankinselbergintegralstructureseuler}, there is a commutative diagram (with no compact support condition on the right-hand side)
        \[\begin{tikzcd}
	{\mathcal{I}(G(F)/G(\mathcal{O}_F),\mathbf{Z}[1/p])} & {C^\infty(P(\mathbf{Q}_p)\backslash G(F)/ G(\mathcal{O}_F),\mathbf{Z}[1/p])} \\
	{\mathcal{I}(G(F)/G(\mathcal{O}_F)[p],\mathbf{Z}[1/p])} & {C^\infty(P(\mathbf{Q}_p)\backslash G(F)/ G(\mathcal{O}_F)[p],\mathbf{Z}[1/p])}
	\arrow["\Xi", from=1-1, to=1-2]
	\arrow["{\mathrm{Tr}}", from=2-1, to=1-1]
	\arrow["{\Xi[p]}", from=2-1, to=2-2]
	\arrow["{\mathrm{Tr}}", from=2-2, to=1-2]
\end{tikzcd}\]
Let $\delta\in\mathcal{I}(G(F)/G(\mathcal{O}_F)[p],\mathbf{Z}[1/p])$. The analogous claim to make in the Asai case, is that $\Xi(\mathrm{Tr}(\delta))$ is valued in $(p-1)\mathbf{Z}[1/p]$ on $Z(F)$. From the commutativity of the diagram above and \eqref{eq: 17}, it is enough to show that the image under the trace map of a function $\xi:=\ch(P(\mathbf{Q}_p)\mathbf{t}_a G(\mathcal{O}_F)[p])$ is valued in $(p-1)\mathbf{Z}[1/p]$. This image is given by
\begin{align}
    \sum_{\gamma\in G(\mathcal{O}_F)/G(\mathcal{O}_F)[p]}\xi((-)\gamma).
\end{align}
It is clear that a complete set of distinct coset representatives of $G(\mathcal{O}_F)/G(\mathcal{O}_F)[p]$ is given by
$$\scalemath{0.9}{
    \left[\begin{matrix}
        (\mathbf{Z}/p\mathbf{Z})^\times & \\
        & 1
    \end{matrix}\right]\ ,\ \left[\begin{matrix}
        (\mathbf{Z}/p\mathbf{Z})^\times & \\
        & 1
    \end{matrix}\right]\left[\begin{matrix}
        \alpha & \\
        & 1
    \end{matrix}\right]\ ,\ \left[\begin{matrix}
        (\mathbf{Z}/p\mathbf{Z})^\times & \\
        & 1
    \end{matrix}\right]\left[\begin{matrix}
        \alpha+(\mathbf{Z}/p\mathbf{Z})^\times & \\
        & 1
    \end{matrix}\right].}
$$
Since for every $u\in(\mathbf{Z}/p\mathbf{Z})^\times$ we have $$P(\mathbf{Q}_p)\mathbf{t}_a G(\mathcal{O}_F)[p]\left[\begin{smallmatrix}
    u & \\
    &1
\end{smallmatrix}\right]=P(\mathbf{Q}_p)\mathbf{t}_a G(\mathcal{O}_F)[p]$$
the claim follows. By looking at the explicit construction of the map $\Xi_c$ found in \cite[\S $3.3$]{groutides2024rankinselbergintegralstructureseuler} which \textcolor{Black}{was also recalled in the proof of} \Cref{prop Hecke equiv map}, we also see that $\Xi_c(\mathrm{Tr}(\delta))$ is valued in $(p-1)\mathbf{Z}[1/p]$ on $Z(F)$. Also, as in \textit{op.cit}, if $\delta$ is as in the second part of the theorem, then $\Xi_c(\mathrm{Tr}(\delta))=(1-\mathcal{S}_F)\cdot\Xi(\mathrm{Tr}(\delta))$ and $\Xi(\mathrm{Tr}(\delta))$ is already compactly supported thus we can directly apply $\Lambda_{\Pi_F}\circ \Phi_c$ to it. Combining this with \Cref{prop explicit Lambda} and the arguments in the proof of the first part of the theorem, the last two parts follow.
    \end{proof}
\end{thm}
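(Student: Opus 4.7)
The plan is to carry over the strategy of \cite{groutides2024rankinselbergintegralstructureseuler} \S 5 to the Asai setting, now that all three ingredients are available locally: cyclicity of $\mathcal{I}(G(F)/G(\mathcal{O}_F))$ (\Cref{cor cyclicity of I}), the Hecke-equivariant integral factorization $\Xi_c$ (\Cref{prop Hecke equiv map}) together with the isomorphism $\Phi_c$ of \eqref{eq: map Phi_c}, and the explicit evaluation of $\Lambda_{\Pi_F}$ on the Iwasawa basis of \eqref{eq: 17} provided by \Cref{prop explicit Lambda}. The common thread is: push $\delta$ through $\Lambda_{\Pi_F}\circ\Phi_c\circ\Xi_c$, read off an identity of the form $\Theta_{\Pi_F}(\mathcal{Q})=\Theta_{\Pi_F}(\mathcal{P}_\delta^{'}\cdot(1-\mathcal{S}_F))$ for some fixed integral $\mathcal{Q}$, and conclude by density of the unramified Whittaker-type family in $\mathrm{Spec}(\mathcal{H}_{G(F)}^\circ)$.

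For part (1), I would take $\delta\in\mathcal{I}(G(F)/G(\mathcal{O}_F),\mathbf{Z}[1/p])$ and use \Cref{prop Hecke equiv map} to write $\Xi_c(\delta)$ as a $\mathbf{Z}[1/p]$-linear combination of the basis functions $\ch(P(\mathbf{Q}_p)\mathbf{t}_a\mathbf{n}_b G(\mathcal{O}_F))$. Applying $\Phi_c$ via \eqref{eq:values of Phi_c} introduces the factor $(p-1)p^{b-1}$ when $b>0$, which crucially cancels the $\tfrac{-p}{p-1}$ appearing in $\epsilon_b(b)$ and clears all denominators in \Cref{prop explicit Lambda}. Composing with $\Lambda_{\Pi_F}$ therefore lands in $\Theta_{\Pi_F}(\mathcal{H}_{G(F)}^\circ(\mathbf{Z}[1/p]))$, while cyclicity and Hecke-equivariance rewrite the same quantity as $\Theta_{\Pi_F}(\mathcal{P}_\delta^{'}(1-\mathcal{S}_F))$. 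Density then forces $\mathcal{P}_\delta^{'}(1-\mathcal{S}_F)\in\mathcal{H}_{G(F)}^\circ(\mathbf{Z}[1/p])$; the factor $(1-\mathcal{S}_F)$ is harmless on the spherical Hecke algebra because it merely shifts central components, so $\mathcal{P}_\delta$ itself is integral.

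For parts (2) and (3), I would reduce to level $G(\mathcal{O}_F)[p]$ via the commutative trace diagram and compute $\Xi_c(\mathrm{Tr}(\delta))$ directly on $Z(F)$. The key local fact is a coset calculation: a set of representatives for $G(\mathcal{O}_F)/G(\mathcal{O}_F)[p]$ can be chosen of the form $\mathrm{diag}(u,1)$ with $u$ running over $(\mathbf{Z}/p\mathbf{Z})^\times$ and two further shifts involving $\alpha$, and left $P(\mathbf{Q}_p)$-invariance collapses each diagonal orbit to a single term, producing exactly $p-1$ equal contributions. In part (2) the lattice is $\mathcal{I}_0$, so the Schwartz function vanishes at the origin, $\Xi_c(\mathrm{Tr}(\delta))$ remains compactly supported, and after $\Phi_c\circ\Lambda_{\Pi_F}$ the $(p-1)$ combines with the $(1-\mathcal{S}_F)$ coming from $\mathcal{P}_\delta^{'}(1-\mathcal{S}_F)$; matching with the explicit shape of \Cref{prop explicit Lambda} and \Cref{lem Asai Euler factor} forces $\mathcal{P}_{\mathrm{Tr}(\delta)}\in\langle(p-1)(1-\mathcal{S}_F),\mathcal{P}_{F,\mathrm{As}}^{'}(1)\rangle$. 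For part (3), I would use $\mathcal{P}_F(1)=\mathcal{P}_{F,\mathrm{As}}(1)/(1-\mathcal{S}_F)$ to cancel the central factor and land in $\langle p-1,\mathcal{P}_F^{'}(1)\rangle$.

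The main obstacle I expect is the loss of compact support modulo $P(\mathbf{Q}_p)$ in the full $\mathcal{I}$-case of part (3): $\Xi_c$ only guarantees compact support when the Schwartz factor vanishes at $0$, so $\Lambda_{\Pi_F}$ cannot be applied naively. The fix, as hinted by the diagram in the text, is to observe that $\Xi_c(\mathrm{Tr}(\delta))=(1-\mathcal{S}_F)\cdot\Xi(\mathrm{Tr}(\delta))$ when $\delta$ lies in $\mathcal{I}_0$, so in the general case one either restricts first to the $\mathcal{I}_0$ piece (which is handled by part (2)) and separately analyses the central part using the $(p-1)$-divisibility on $Z(F)$, or one directly uses that the non-compact tail on $Z(F)$ contributes only through the polynomial $(1-\mathcal{S}_F)$, allowing the argument of part (1) to proceed verbatim with $\mathcal{P}_{F,\mathrm{As}}^{'}(1)/(1-\mathcal{S}_F)=\mathcal{P}_F^{'}(1)$ replacing $\mathcal{P}_{F,\mathrm{As}}^{'}(1)$.
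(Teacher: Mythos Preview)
Your proposal is correct and follows essentially the same approach as the paper: push $\delta$ through $\Lambda_{\Pi_F}\circ\Phi_c\circ\Xi_c$, use the cancellation between the $(p-1)p^{b-1}$ from \eqref{eq:values of Phi_c} and the denominators in $\epsilon_n(b)$ to land in $\Theta_{\Pi_F}(\mathcal{H}_{G(F)}^\circ(\mathbf{Z}[1/p]))$, compare with $\Theta_{\Pi_F}(\mathcal{P}_\delta^{'}(1-\mathcal{S}_F))$ via cyclicity and Hecke-equivariance, and conclude by density; for parts (2)--(3) the $(p-1)$-divisibility on $Z(F)$ comes from exactly the coset computation you describe, and the compact-support issue is handled via the relation $\Xi_c=(1-\mathcal{S}_F)\cdot\Xi$ on $\mathcal{I}_0$. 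The only place your write-up is looser than the paper's is the final paragraph on part (3), where you hedge between two fixes; the paper simply notes that once the $(p-1)$-divisibility on $Z(F)$ is established for $\Xi$ (and hence for $\Xi_c$), the same chain of equalities as in part (1) applied with \Cref{prop explicit Lambda} gives the claim directly, the factor $(1-\mathcal{S}_F)$ being absorbed into $\mathcal{P}_F^{'}(1)=\mathcal{P}_{F,\mathrm{As}}^{'}(1)/(1-\mathcal{S}_F)$.
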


\subsection{Local periods and integral values}
Using the Hecke-module results of the previous section, we are able to determine how local Asai periods interact with these types of integral structures. Or in other words, when they are evaluated on \textit{arbitrary integral test data} in the sense of \cite[\S $6.4$]{loeffler2021zetaintegralsunramifiedrepresentationsgsp4}. This would not have been possible to obtain, for such general input data, via unfolding and trying to compute the integrals involved directly.
\begin{thm}\label{prop integral period 1}
    Let $\Pi_F$ be an unramified Whittaker type $G(F)$-representation. Then, 
    \begin{align}\label{eq: 26}
        \mathrm{dim}\ \mathrm{Hom}_{G(\mathbf{Q}_p)}\left(\mathcal{S}(\mathbf{Q}_p^2)\otimes \Pi_F,\mathbf{1}\right)=1
    \end{align}
    with the period $\mathcal{Z}$ as a basis.
    Furthermore, suppose that the spherical Hecke eigensystem of $\Pi_F$ restricts to $\mathcal{H}_{G(F)}^\circ(\mathcal{R})\rightarrow\mathcal{R}$ for some $\mathbf{Z}[1/p]$-algebra $\mathcal{R}$, then:
    \begin{enumerate}
    \item For any $g\in G(F)$ and every $\phi\in\mathcal{S}(\mathbf{Q}_p^2)$ valued in $\vol_{G(\mathbf{Q}_p)}(\mathrm{Stab}_{G(\mathbf{Q}_p)}(\phi)\cap gG(\mathcal{O}_F) g^{-1})^{-1}\cdot\mathcal{R}$ we have $\mathcal{Z}\left(\phi\otimes gW_{\Pi_F}^\mathrm{sph}\right)\in \mathcal{R}$.
        \item For any $g\in G(F)$ and every $\phi\in\mathcal{S}_0(\mathbf{Q}_p^2)$ valued in $\vol_{G(\mathbf{Q}_p)}(\mathrm{Stab}_{G(\mathbf{Q}_p)}(\phi)\cap gG(\mathcal{O}_F)[p] g^{-1})^{-1}\cdot\mathcal{R}$ we have $$\mathcal{Z}\left(\phi\otimes gW_{\Pi_F}^\mathrm{sph}\right)\in \left\langle (p-1)L(\omega_{\Pi_F},0)^{-1},L^\mathrm{As}(\Pi_F,0)^{-1}\right\rangle\subseteq\mathcal{R}.$$
        \item For every $g\in G(F)$ and every $\phi\in\mathcal{S}(\mathbf{Q}_p^2)$ valued in $\vol_{G(\mathbf{Q}_p)}(\mathrm{Stab}_{G(\mathbf{Q}_p)}(\phi)\cap gG(\mathcal{O}_F)[p] g^{-1})^{-1}\cdot\mathcal{R}$ we have $$\mathcal{Z}\left(\phi\otimes gW_{\Pi_F}^\mathrm{sph}\right\rangle\in \left\langle p-1,L(\Pi_F,0)^{-1}\right\rangle\subseteq\mathcal{R}.$$
    \end{enumerate}
   
\end{thm}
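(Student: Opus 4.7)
The dimension-one assertion is the classical multiplicity-one result for inert Asai periods (Flicker--Matringe, cf.\ \cite{matringe2008distinguishedrepresentationsexceptionalpoles}), which combined with the non-vanishing of \Cref{lem non-zero Asai periods}$(1)$ shows that $\mathcal{Z}$ is a basis; this is really the only non-mechanical input to the proof. The integral statements $(1)$--$(3)$ are then straight dualizations of the Hecke-module results of \Cref{sec local factors} through the spherical Hecke eigensystem $\Theta_{\Pi_F}$. The plan is to define, for each level $K\in\{G(\mathcal{O}_F),G(\mathcal{O}_F)[p]\}$, the linear form
\[
\Lambda^{\mathcal{Z}}_{\Pi_F}\colon \mathcal{I}_{(0)}(G(F)/K)\longrightarrow\mathbf{C},\qquad \phi\otimes\ch(gK)\longmapsto \mathcal{Z}(\phi\otimes gW^{\mathrm{sph}}_{\Pi_F}),
\]
which is well-defined on $G(\mathbf{Q}_p)$-coinvariants (by $G(\mathbf{Q}_p)$-invariance of $\mathcal{Z}$) and makes sense at level $[p]$ since $W^{\mathrm{sph}}_{\Pi_F}$ is $G(\mathcal{O}_F)[p]$-invariant.

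For part $(1)$: combining the cyclicity of \Cref{cor cyclicity of I} with the convolution identity $\mathcal{P}_\delta\cdot\ch(G(\mathcal{O}_F))=\mathcal{P}_\delta^{'}$ (exactly as in the proof of \Cref{thm integral local factors}) and the normalization $\mathcal{Z}(\ch(\mathbf{Z}_p^2)\otimes W^{\mathrm{sph}}_{\Pi_F})=1$, one obtains the clean formula
\[
\mathcal{Z}(\phi\otimes gW^{\mathrm{sph}}_{\Pi_F}) \;=\; \Theta_{\Pi_F}(\mathcal{P}_\delta^{'}),\qquad \delta:=\phi\otimes\ch(gG(\mathcal{O}_F)).
\]
Since $\xi\mapsto\xi^{'}$ preserves $\mathcal{H}_{G(F)}^\circ(\mathcal{R})$ and $\Theta_{\Pi_F}$ lands in $\mathcal{R}$ by assumption, part $(1)$ is immediate from \Cref{thm integral local factors}$(1)$.

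For parts $(2)$ and $(3)$, a direct computation exploiting the normality of $G(\mathcal{O}_F)[p]$ in $G(\mathcal{O}_F)$ gives
\[
\mathrm{Tr}^{G(\mathcal{O}_F)[p]}_{G(\mathcal{O}_F)}\!\left(\phi\otimes\ch(gG(\mathcal{O}_F)[p])\right) \;=\; \phi\otimes\ch(gG(\mathcal{O}_F)),
\]
so that $\Lambda^{\mathcal{Z}}_{\Pi_F}(\delta)=\Lambda^{\mathcal{Z}}_{\Pi_F}(\mathrm{Tr}(\delta))=\Theta_{\Pi_F}(\mathcal{P}_{\mathrm{Tr}(\delta)}^{'})$ for $\delta$ integral at level $[p]$. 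The claimed memberships in $(2)$ and $(3)$ now follow from the corresponding parts of \Cref{thm integral local factors}, together with the Satake identities $\Theta_{\Pi_F}(1-\mathcal{S}_F)=L(\omega_{\Pi_F},0)^{-1}$, $\Theta_{\Pi_F}(\mathcal{P}_{F,\mathrm{As}}(1))=L(\mathrm{As}\,\Pi_F,0)^{-1}$ and $\Theta_{\Pi_F}(\mathcal{P}_F(1))=L(\Pi_F,0)^{-1}$ coming from \Cref{lem Asai Euler factor}. The anti-involution $\xi\mapsto\xi^{'}$ corresponds under $\Theta_{\Pi_F}$ to inversion of Satake parameters and thus modifies the relevant ideal generators only by unit factors involving $\omega_{\Pi_F}(p)\in\mathcal{R}^\times$, which do not affect the generated ideals.
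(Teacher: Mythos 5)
Your argument for the integrality statements $(1)$--$(3)$ is correct and follows essentially the same route as the paper: both proofs dualize the Hecke-module statements of \Cref{thm integral local factors} through the spherical eigensystem $\Theta_{\Pi_F}$. You give a more hands-on derivation of the key identity $\mathcal{Z}(\phi\otimes gW^{\mathrm{sph}}_{\Pi_F})=\Theta_{\Pi_F}(\mathcal{P}_\delta^{'})$ using \Cref{cor cyclicity of I} and the explicit convolution $\mathcal{P}_\delta\cdot\ch(G(\mathcal{O}_F))=\mathcal{P}_\delta^{'}$, whereas the paper delegates this bookkeeping to the abstract statement of [\cite{Loeffler_2021} Proposition $4.4.1$]; the two are interchangeable, and your observation about the trace collapsing $\phi\otimes\ch(gG(\mathcal{O}_F)[p])$ directly to $\phi\otimes\ch(gG(\mathcal{O}_F))$ is a clean way to pass from level $[p]$ to spherical level without invoking normality (which is not actually needed: right cosets $U\gamma^{-1}$ already partition $K$ for any $U\leq K$). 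Your remark about the anti-involution modifying generators only by the unit $\omega_{\Pi_F}(p)=\Theta_{\Pi_F}(\mathcal{S}_F)\in\mathcal{R}^\times$ is exactly the point needed to pass between $\mathcal{P}_F^{'}(1)$, $\mathcal{P}_{F,\mathrm{As}}^{'}(1)$ and the stated $L$-values.

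There is, however, one genuine imprecision in your treatment of the opening dimension count \eqref{eq: 26}. You call it ``the classical multiplicity-one result for inert Asai periods (Flicker--Matringe)'' and declare this the only non-mechanical input. But as the paper's surrounding discussion makes explicit, the classical literature (Kable, Flicker) only yields the bound $\dim\leq 1$ \emph{generically}, or for irreducible $\Pi_F$; it does not cover all unramified Whittaker-type representations, and the reducible Whittaker-type cases are precisely what the uniqueness statement in \eqref{eq: 26} is also claiming. The correct input is \Cref{cor cyclicity of I}: cyclicity of $\mathcal{I}(G(F)/G(\mathcal{O}_F))$ over $\mathcal{H}_{G(F)}^\circ$ implies the Hom-space is at most one-dimensional for \emph{every} $\Pi_F$ in the family, since any $G(\mathbf{Q}_p)$-invariant linear form on $\mathcal{S}(\mathbf{Q}_p^2)\otimes\Pi_F$ factors through this cyclic Hecke module and is determined by its value on the generator. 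You in fact already invoke \Cref{cor cyclicity of I} in your derivation of the explicit formula, so the fix is simply to cite it for the dimension bound as well rather than attributing the bound to multiplicity-one results that do not cover the full generality required.
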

 Before sketching the proof, we want to mention that the bound on the dimension in \eqref{eq: 26} follows generically (i.e. for almost all $\Pi_F)$ by \cite[Theorem $1$] {8b685a57-0995-30f2-97f8-c2f88420d820} where Kable proves a certain generic \textcolor{Black}{upper} bound in greater generality, which specializes to \eqref{eq: 26} in our case. The techniques in \textit{op.cit} are vastly different. Here we give an alternative proof of this result in our specific setup, which allows one to remove the generic condition and include all such $\Pi_F$.

 \begin{proof}
        The fact that the linear form $\mathcal{Z}$ is a non-zero element of the Hom-space in \eqref{eq: 26} follows from \Cref{lem non-zero Asai periods}(1). To bound the dimension, one argues in the same manner as \cite[Theorem $3.2.3$]{groutides2024rankinselbergintegralstructureseuler}, this time using \Cref{cor cyclicity of I}. For the two statements regarding integrality, one argues as in \cite[Theorem $6.2.1$]{groutides2024rankinselbergintegralstructureseuler}, i.e., we combine \cite[Proposition $4.4.1$]{Loeffler_2021} with \Cref{thm integral local factors} and \Cref{lem Asai Euler factor}.
    \end{proof}

\begin{rem}
    The assumption on the spherical Hecke eigensystem is a natural one to make from a global viewpoint, as we'll see later when we apply this to obtain our $\ell$-adic integrality results for periods attached to Hilbert modular forms.
\end{rem}

We also want to outline an interesting connection with distinguished poles of the Asai $L$-factor $L^\mathrm{As}(\Pi_F,s)$ (in the sense of \cite[Definition $3.1$]{matringe2008distinguishedrepresentationsexceptionalpoles}) and unramified Whittaker type $G(F)$-representations distinguished by $G(\mathbf{Q}_p)$. This was first studied in \cite{flicker1994quaternionic} where (specializing to our setup) the authors prove that every unramified Whittaker-type $\Pi_F$ of $G(F)$, with trivial central-character, is $G(\mathbf{Q}_p)$-distinguished. The proof of the above proposition directly constructs a non-zero $G(\mathbf{Q}_p)$-invariant linear form on $\Pi_F$ and actually shows that it is non-vanishing on the spherical vector of $\Pi_F$. In fact, by \cite{Flicker+1991+139+172} this linear form is unique up to non-zero scalar multiple (the proof in \textit{op.cit} deals with the irreducible case however the result extends to Whittaker type). By \cite[Theorem $3.1$]{matringe2008distinguishedrepresentationsexceptionalpoles} (which uses the functional equation due to \cite{flicker1993zeroes}), this implies that $L^\mathrm{As}(\Pi_F,s)$ has an \textit{exceptional pole} at $s=0$ and thus the linear form $\mathcal{Z}$ of \Cref{prop integral period 1} vanishes on $\mathcal{S}_0(\mathbf{Q}_p^2)\otimes \Pi_F$ which is in line with part $(2)$ of \Cref{prop integral period 1} as in this case both generators of the ideal present are identically zero. In addition, the normalized period $\Xi$ in $\mathrm{Hom}_{G(\mathbf{Q}_p)}(\Pi_F,\mathbf{1})$ which sends $W_{\Pi_F}^\mathrm{sph}$ to $1$, satisfies $\mathcal{Z}(\phi\otimes gW_{\Pi_F}^\mathrm{sph})=\Xi(gW_{\Pi_F}^\mathrm{sph})\phi(0)$. Thus, using this and \Cref{prop integral period 1}($3$), one can obtain the following integrality result for the period $\Xi$, constructed in \cite{flicker1994quaternionic}:
\begin{prop}
    Let $\Pi_F$ be an unramified Whittaker type $G(F)$-representation with trivial central character. Assume that its spherical Hecke eigensystem restricts to $\mathcal{H}_{G(F)}^\circ(\mathcal{R})\rightarrow\mathcal{R}$ for some $\mathbf{Z}[1/p]$-algebra $\mathcal{R}$. Then $\Xi$, the unique normalized $G(\mathbf{Q}_p)$-invariant period on $\Pi_F$, constructed in \cite{flicker1994quaternionic}, satisfies
    \begin{align*}
        \Xi\left(\vol_{G(\mathbf{
        Q}_p)}(G(\mathbf{Z}_p)\cap g G(\mathcal{O}_F)[p] g^{-1})^{-1}\cdot gW_{\Pi_F}^\mathrm{sph}\right)\in \left\langle p-1,L(\Pi_F,0)^{-1}\right\rangle\subseteq\mathcal{R}
    \end{align*}
    for all $g\in G(F)$.
\end{prop}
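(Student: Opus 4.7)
The plan is to deduce this as a corollary of \Cref{prop integral period 1}(3) by using the factorization identity $\mathcal{Z}(\phi\otimes gW_{\Pi_F}^{\mathrm{sph}})=\Xi(gW_{\Pi_F}^{\mathrm{sph}})\phi(0)$ recorded in the paragraph preceding the statement (which is valid because, for $\Pi_F$ with trivial central character, $L(\mathrm{As}\ \Pi_F,s)$ has an exceptional pole at $s=0$, forcing $\mathcal{Z}$ to factor through evaluation at the origin on the Schwartz factor). The strategy is to pick a Schwartz function $\phi$ whose value at $(0,0)$ is precisely the volume normalization appearing in the statement, and then push the $\mathcal{R}$-integrality from $\mathcal{Z}$ across to $\Xi$ via linearity.

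Concretely, I would take
$$\phi := \vol_{G(\mathbf{Q}_p)}(G(\mathbf{Z}_p)\cap gG(\mathcal{O}_F)[p]g^{-1})^{-1}\cdot \ch(\mathbf{Z}_p^2).$$
Since $\ch(\mathbf{Z}_p^2)$ is $G(\mathbf{Z}_p)$-stable, $\mathrm{Stab}_{G(\mathbf{Q}_p)}(\phi)=G(\mathbf{Z}_p)$, so the integrality condition of \Cref{prop integral period 1}(3) demands that $\phi$ take values in $\vol_{G(\mathbf{Q}_p)}(G(\mathbf{Z}_p)\cap gG(\mathcal{O}_F)[p]g^{-1})^{-1}\cdot \mathcal{R}$, which is satisfied tautologically by construction. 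Applying \Cref{prop integral period 1}(3) yields
$$\mathcal{Z}(\phi\otimes gW_{\Pi_F}^{\mathrm{sph}})\in \langle p-1,L(\Pi_F,0)^{-1}\rangle\subseteq \mathcal{R}.$$
On the other hand, the factorization identity together with $\phi(0)=\vol_{G(\mathbf{Q}_p)}(G(\mathbf{Z}_p)\cap gG(\mathcal{O}_F)[p]g^{-1})^{-1}$ and $\mathcal{R}$-linearity of $\Xi$ gives
$$\mathcal{Z}(\phi\otimes gW_{\Pi_F}^{\mathrm{sph}})=\Xi\bigl(\vol_{G(\mathbf{Q}_p)}(G(\mathbf{Z}_p)\cap gG(\mathcal{O}_F)[p]g^{-1})^{-1}\cdot gW_{\Pi_F}^{\mathrm{sph}}\bigr),$$
which is exactly the quantity to be bounded.

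There is no real obstacle once \Cref{prop integral period 1}(3) is available; the proposition is essentially a rephrasing of the integrality of $\mathcal{Z}$ on general test data in terms of the simpler invariant period $\Xi$ of \cite{flicker1994quaternionic}, made possible by the exceptional-pole phenomenon that collapses the two-variable period to a one-variable one. The only minor point to check is that the scalar $\vol_{G(\mathbf{Q}_p)}(G(\mathbf{Z}_p)\cap gG(\mathcal{O}_F)[p]g^{-1})^{-1}$, which is a positive integer (the index of an open compact subgroup of $G(\mathbf{Z}_p)$), genuinely sits inside the $\mathbf{Z}[1/p]$-algebra $\mathcal{R}$, so that the rescaling by this factor and the subsequent extraction from $\Xi$ are meaningful \emph{a priori}; this is immediate.
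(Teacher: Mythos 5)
Your proof is correct and is exactly the argument the paper intends: choose $\phi$ to be the scaled characteristic function $\vol_{G(\mathbf{Q}_p)}(G(\mathbf{Z}_p)\cap gG(\mathcal{O}_F)[p]g^{-1})^{-1}\cdot\ch(\mathbf{Z}_p^2)$ (which trivially satisfies the valuation condition of part (3), since $\mathrm{Stab}_{G(\mathbf{Q}_p)}(\phi)=G(\mathbf{Z}_p)$), apply part (3) of the preceding theorem, and then translate through the exceptional-pole factorization $\mathcal{Z}(\phi\otimes gW_{\Pi_F}^{\mathrm{sph}})=\Xi(gW_{\Pi_F}^{\mathrm{sph}})\phi(0)$. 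This matches the paper's sketch, which just gestures at combining that factorization with part (3) without writing out the explicit $\phi$.
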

 
 Finally, we briefly discuss the corresponding situation of \cite[\S $6.3$]{groutides2024rankinselbergintegralstructureseuler}. The space of invariant periods $\mathrm{Hom}_{G(\mathbf{Q}_p)}(\Pi_F\otimes \pi_p,\mathbf{1})$ where $\Pi_F,\pi_p$ are irreducible admissible representations of $G(F)$ and $G(\mathbf{Q}_p)$, respectively, was originally studied in detail in \cite{prasad1992invariant} where one of the main theorems \textcolor{Black}{states} that for irreducible $\Pi_F$ and $\pi_p$ the Hom-space above is at most one dimensional (even for ramified quadratic extensions $F/\mathbf{Q}_p$). Prasads' proof uses the Gelfand-Kazhdan approach of invariant distributions. In \cite{loeffler2021gross} using different approaches, Loeffler extends this result to allow for one or both of $\Pi_F$ and $\pi_p$ to be reducible and of Whittaker type. The proof is not tailored to \textit{unramified} Whittaker type representations, and thus even within this family, the results in both \cite{prasad1992invariant} and \cite{loeffler2021gross}, distinguish between several cases in order to obtain uniqueness of such an invariant period. Using \Cref{thm cyclicity of M}, one can obtain a unified direct proof for the uniqueness of invariant periods for the family of unramified Whittaker type representations, together with abstract existence of good test vectors, in the $F=\mathbf{Q}_{p^2}$ setting. Additionally, we can obtain results on the \textit{integral} behavior of such a non-zero period using the Godement–Siegel projection map $\mathcal{S}(\mathbf{Q}_p^2)\rightarrow \pi_p$, in the same style as \cite[\S $6.3$]{groutides2024rankinselbergintegralstructureseuler}, this time using \Cref{prop integral period 1}.
 \section{Periods associated to Hilbert modular forms}\label{sec periods attached to HMF}
 Throughout this section, we fix a totally real quadratic field $E$, and prime $\ell$. We'll work with the algebraic groups $G=\GL_2$ and $\mathscr{G}=\mathrm{Res}_{E/\mathbf{Q}}G$. \textcolor{Black}{Additionally, we will only work with primes $p$ which are unramified in $E$.} It is clear that 
$$\scalemath{0.9}{\mathscr{G}(\mathbf{Q}_p)=\begin{dcases}
    G(E_{v_1})\times G(E_{v_2})=G(\mathbf{Q}_p)\times G(\mathbf{Q}_p),\ &\mathrm{if}\  p=v_1v_2\ \mathrm{split}\\
    G(E_v)=G(\mathbf{Q}_{p^2}),\ &\mathrm{if}\  p=v\ \mathrm{inert.}
\end{dcases}}$$
We have the standard maximal open compact subgroup in $\mathscr{G}(\mathbf{Q}_p)$, given by
$$\scalemath{0.9}{\mathscr{G}(\mathbf{Z}_p):=\begin{dcases}
    G(\mathcal{O}_{E_{v_1}})\times G(\mathcal{O}_{E_{v_2}})=G(\mathbf{Z}_p)\times G(\mathbf{Z}_p),\ &\mathrm{if}\  p=v_1v_2\ \mathrm{split}\\
    G(\mathcal{O}_{E_v})=G(\mathbf{Z}_{p^2}),\ &\mathrm{if}\  p=v\ \mathrm{inert}
\end{dcases}}$$
and the open compact determinant level subgroup of $\mathscr{G}(\mathbf{Z}_p)$, given by
\begin{align*}\label{eq: det level subgp}
   \scalemath{0.9}{ \mathscr{G}(\mathbf{Z}_p)[p]:=\begin{dcases}
    \{(g_1,g_2)\in \mathscr{G}(\mathbf{Z}_p)\ |\ \det(g_2)\in 1+p\mathbf{Z}_{p}\},\ &\mathrm{if}\ p\ \mathrm{split}\\
        \{g\in \mathscr{G}(\mathbf{Z}_p)\ |\ \det(g)\in 1+p\mathbf{Z}_{p^2}\},\ &\mathrm{if}\ p\ \mathrm{inert}.
    \end{dcases}}
\end{align*}

\begin{rem}
    Of course, for inert primes, this setup matches the local setup of \Cref{sec local results}. For split primes, it matches the local setup of \cite{groutides2024rankinselbergintegralstructureseuler}, and as in \textit{op.cit}, the results will be independent of the choice between $\det(g_1)$ and $\det(g_2)$ in the definition of $\mathscr{G}(\mathbf{Z}_p)[p]$.
\end{rem}
As usual, given an unramified prime $p$, in $E$, we set $\mathcal{H}_{\mathscr{G}(\mathbf{Q}_p)}^\circ:=C_c^\infty(\mathscr{G}(\mathbf{Z}_p)\backslash\mathscr{G}(\mathbf{Q}_p)/\mathscr{G}(\mathbf{Z}_p))$; the spherical Hecke algebra of $\mathscr{G}(\mathbf{Q}_p)$ under the standard convolution with respect to the Haar measure on $\mathscr{G}(\mathbf{Q}_p)$ which gives $\mathscr{G}(\mathbf{Z}_p)$ volume $1$.
 \begin{defn}
 Let $\underline{k}=(k_1,k_2)\in\mathbf{Z}_{\geq 2}$ and $\underline{t}=(t_1,t_2)\in\mathbf{Z}_{\geq 0}$, with $w:=k_1+2t_1=k_2+2t_2$. A quadratic Hilbert cuspidal eigenform $\mathbf{f}$ associated to $E$, of level $\mathfrak{n}\subseteq \mathcal{O}_E$ with $\ell\nmid\mathfrak{n}$, and weight $(\underline{k},\underline{t})$, is a normalized eigenform in $S_{(\underline{k},\underline{t})}(U_1(\mathfrak{n}),\mathbf{C})$, defined as in \cite[\S $4.1$]{lei2018euler}.
 \end{defn}
\subsection{Associated representations}
Let $\mathbf{f}$ be a quadratic Hilbert cuspidal eigenform as in the above definition. We write $\Pi_\mathbf{f}$ for the associated irreducible, automorphic representation of $G(\mathbf{A}_E)$. The representation $\Pi_\mathbf{f}$ factorizes into a restricted tensor product of irreducible admissible representations of $G(E_v)$ where $v$ runs through places of $E$. However, as in \cite{8b685a57-0995-30f2-97f8-c2f88420d820}, we want to consider such a factorization as being indexed over places of $\mathbf{Q}$. In other words, we regard it as an automorphic representation of $\mathscr{G}(\mathbf{A})$. Thus, if $p$ is a place of $\mathbf{Q}$ which splits in $E$ as $v_1v_2$, we write $\Pi_{\mathbf{f},p}$ for $\Pi_{\mathbf{f},v_1}\boxtimes \Pi_{\mathbf{f},v_2}$ regarded as a representation of $G(E_{v_1})\times G(E_{v_2})= G(\mathbf{Q}_p)\times G(\mathbf{Q}_p)$. Alternatively,
if $p$ is non-split, then $\Pi_{\mathbf{f},p}$ denotes $\Pi_{\mathbf{f},v}$, where $v$ is the unique place of $E$ above $p$, regarded as a representation of $G(E_v)=G(\mathbf{Q}_{p^2})$. Using this notation, the representation $\Pi_\mathbf{f}$ decomposes as
$\Pi_\mathbf{f}\simeq \bigotimes_{p}^{'}\Pi_{\mathbf{f},p}.$

  For an ideal $\mathfrak{a}$ of $\mathcal{O}_E$, we write $\lambda_\mathfrak{a}(\mathbf{f})$ for the $\mathfrak{a}$-th Hecke eigenvalue of $\mathbf{f}$ as in \cite[\S $4.1$]{lei2018euler}. For varying $\mathfrak{a}$, the $\lambda_\mathfrak{a}(\mathbf{f})$ are algebraic integers and generate a number field, which we denote by $L_\mathbf{f}$, containing the central character of $\Pi_\mathbf{f}$, which we denote by $\omega_{\Pi_\mathbf{f}}$. \textcolor{Black}{We explicitly have $\omega_{\Pi_\mathbf{f}}=|\cdot|_{\mathbf{A}_E}^{2-w}\epsilon_\mathbf{f}$, where $\epsilon_\mathbf{f}$ is the adelization of the Nebentype of $\mathbf{f}$.} We fix an arbitrary place $v$, of $L_\mathbf{f}$, lying above our fixed prime $\ell$. We write $\mathbf{L}_\mathbf{f}:=(L_\mathbf{f})_v$; a finite extension of $\mathbf{Q}_\ell$.

We let $S$ be the set of places of $\mathbf{Q}$ containing: $2,\ell,\infty$ and primes $p|\mathrm{Nm}_{E/\mathbf{Q}}(\mathfrak{n})\Delta_E$\textcolor{Black}{, where $\Delta_E$ denotes the discriminant of $E$}. Then, for every $p\notin  S$, the $\mathscr{G}(\mathbf{Q}_p)$-representation $\Pi_{\mathbf{f},p}$ is an unramified principal-series \textcolor{Black}{of $G(\mathbf{Q}_{p^2})$ if $p$ is inert, and a product of two unramified principal series of $G(\mathbf{Q}_p)$ if $p$ is split}. Following the same indexing, if a finite place $p\notin S$, splits as $p=v_1v_2$ we write $\omega_{\Pi_\mathbf{f}}(\varpi_p)$ for $\omega_{\Pi_\mathbf{f}}(\varpi_{v_1})\omega_{\Pi_\mathbf{f}}(\varpi_{v_2})$, where $\varpi_{v_1},\varpi_{v_2}$ are uniformizers of $E_{v_1},E_{v_2}$ respectively. On the other hand, if it's inert, we write $\omega_{\Pi_\mathbf{f}}(\varpi_p)$ for $\omega_{\Pi_\mathbf{f}}(\varpi_v)$, where $\varpi_v$ is a uniformizer of $E_v$ and $p|v$ (we adopt the same convention for $\epsilon_\mathbf{f})$. For $p\notin S$, we also write $L_p(\omega_{\Pi_\mathbf{f}},s):=\frac{1}{1-\omega_{\Pi_\mathbf{f}}(\varpi_p)p^s}$ and $$L^\mathrm{As}_p(\Pi_\mathbf{f},s):=\begin{dcases}
    L^\mathrm{As}(\Pi_{\mathbf{f},p} , s),\ &\mathrm{if}\ p\ \mathrm{is}\ \mathrm{inert}\\
    L(\Pi_{\mathbf{f},p},s),\ &\mathrm{if}\ p\ 
    \mathrm{splits}.
\end{dcases}$$
For $p$ inert $L^\mathrm{As}(\Pi_{\mathbf{f},p} , s)$ denotes the local Asai $L$-factor of \Cref{lem Asai Euler factor} and for $p=v_1v_2$ split, $L(\Pi_{\mathbf{f},p},s)=L(\Pi_{\mathbf{f},v_1}\boxtimes\Pi_{\mathbf{f},v_2},s)$ denotes the local Rankin-Selberg convolution $L$-factor, \textcolor{Black}{appeared} in \cite{groutides2024rankinselbergintegralstructureseuler}. 

We write $\rho_\mathbf{f}^\mathrm{As}=\rho_{\mathbf{f},v}^\mathrm{As}$, for the $4$-dimensional $\mathbf{L}_\mathbf{f}$-linear Asai Galois representation of $\mathrm{Gal}(\overline{\mathbf{Q}}/\mathbf{Q})$ attached to $\mathbf{f}$, which is given by the tensor induction $\otimes$-$\mathrm{Ind}^\mathbf{Q}_E(\rho_\mathbf{f}^\mathrm{std})\otimes \mathbf{L}_\mathbf{f}(t_1+t_2)$. The cyclotomic twist by $t_1+t_2$ is the natural one to consider as pointed out in \cite[Remark $4.3.3$]{lei2018euler}. Here $\rho_\mathbf{f}^\mathrm{std}$ denotes the ``standard'' $2$-dimensional Galois representation of $\mathrm{Gal}(\overline{\mathbf{Q}}/E)$ attached to $\mathbf{f}$, due to Blasius–Rogawski–Taylor. It is known that the Asai representation is unramified outside $S$. As usual, for a prime $p\notin S$, we write $P_p^\mathrm{As}(\mathbf{f},X)$ for the Artin local factor given by $\det(1-X\mathrm{Frob}_p^{-1}|\rho_\mathbf{f}^\mathrm{As})$ where $\mathrm{Frob}_p$ is the arithmetic Frobenius at $p$. Finally, for such prime $p$, we set $L_p^\mathrm{As}(\mathbf{f},s):=\frac{1}{P_p^\mathrm{As}(\mathbf{f},p^{-s})}$.

\subsection{The period}
This section adapts the result of \cite[\S $6.4$]{groutides2024rankinselbergintegralstructureseuler} from the Rankin-Selberg case to the Asai case. That is, from the convolution of two classical modular forms, to Hilbert modular forms. Here we present the result directly at the level of Asai periods on the unramified part of $\Pi_\mathbf{f}$ without realizing them as a certain limit of ratios of global and local zeta integrals, and $L$-functions. This can also be done if one wishes to, using \cite[\S $4$]{8b685a57-0995-30f2-97f8-c2f88420d820}.

\subsubsection{Normalization}
We regard $\Pi_\mathbf{f}^S$ (the $S$-finite part of $\Pi_\mathbf{f}$) as a representation of $\mathscr{G}(\mathbf{A}^S)$, and we consider the space of Asai periods
\begin{align}
    \mathrm{Hom}_{G(\mathbf{A}^S)}\left(\mathcal{S}((\mathbf{A}^S)^2)\otimes \Pi_\mathbf{f}^S,\mathbf{1}\right).
\end{align}
We actually know from the local tools presented in \Cref{sec local results} and the corresponding ones in \cite[\S $6$]{groutides2024rankinselbergintegralstructureseuler}, that this space is one dimensional. If $p=v_1v_2$ is split, then we fix additive characters $\psi_{v_i}:E_{v_i}\rightarrow\mathbf{C}^\times$ of conductor one such that $\psi_{v_2}=\psi_{v_1}^{-1}$, as characters of $\mathbf{Q}_p$, under the canonical isomorphisms $\mathbf{Q}_p\simeq E_{v_i}$. If $p=v$ is inert, then we fix an additive character $\psi_v:\mathbf{Q}_{p^2}\simeq E_v\rightarrow\mathbf{C}^\times$ as in \Cref{sec zeta integrals}. We can identify $\Pi_\mathbf{f}^S=\otimes_{p\notin S}^{'}\mathcal{W}(\Pi_{\mathbf{f},p})$, where $$\mathcal{W}(\Pi_{\mathbf{f},p}):=\begin{dcases}\mathcal{W}(\Pi_{\mathbf{f},v_1},\psi_{v_1})\boxtimes \mathcal{W}(\Pi_{\mathbf{f},v_2},\psi_{v_2}),\ &\mathrm{if}\ p=v_1v_2\\
\mathcal{W}(\Pi_{\mathbf{f},v},\psi_v),\ &\mathrm{if}\ p=v.
\end{dcases}
$$Under this identification, we have a canonical normalized spherical vector $W_{\Pi_\mathbf{f}^S}^\mathrm{sph}\in\Pi_\mathbf{f}^S$, that factorizes into the standard normalized spherical vectors in each local Whittaker model which map the identity element to $1$. The unique non-zero period in $ \mathrm{Hom}_{G(\mathbf{A}^S)}\left(\mathcal{S}((\mathbf{A}^S)^2)\otimes \Pi_\mathbf{f}^S,\mathbf{1}\right)$ is normalized to map the unramified vector $\ch((\hat{\mathbf{Z}}^S)^2)\otimes W_{\Pi_\mathbf{f}^S}^\mathrm{sph}$ to $1$.

\begin{thm}\label{thm hilbert period}
Let $\mathcal{Z}_\mathbf{f}$ be the unique, normalized, non-zero period in $\mathrm{Hom}_{G(\mathbf{A}^S)}\left(\mathcal{S}((\mathbf{A}^S)^2)\otimes \Pi_\mathbf{f}^S,\mathbf{1}\right)$.Then the following are true:
\begin{enumerate}
    \item For any $g\in\mathscr{G}(\mathbf{A}^S)$ and any decomposable Schwartz function $\Phi=\otimes_{p\notin S} \Phi_p\in \mathcal{S}((\mathbf{A}^S)^2)$ where each $\Phi_p$ is valued in $\vol_{G(\mathbf{Q}_p)}(\mathrm{Stab}_{G(\mathbf{Q}_p)}(\Phi_p)\cap g_p\mathscr{G}(\mathbf{Z}_p)g_p^{-1})^{-1}\cdot\mathcal{O}_{L_\mathbf{f}}$, the period $\mathcal{Z}_\mathbf{f}$ satisfies 
    $$\mathcal{Z}_\mathbf{f}(\Phi\otimes gW_{\Pi_\mathbf{f}}^\mathrm{sph})\in\mathcal{O}_{\mathbf{L}_\mathbf{f}}.$$
    \item Suppose, moreover, that $S_0$ denotes a finite set of primes disjoint from $S$, and for each $p\in S_0$, $\Phi_p$ is valued in $\vol_{G(\mathbf{Q}_p)}(\mathrm{Stab}_{G(\mathbf{Q}_p)}(\Phi_p)\cap g_p\mathscr{G}(\mathbf{Z}_p)[p]g_p^{-1})^{-1}\cdot\mathcal{O}_{L_\mathbf{f}}$. Then the period $\mathcal{Z}_\mathbf{f}$ also satisfies
    \begin{align}\label{eq: 27}\mathcal{Z}_\mathbf{f}\left(\Phi\otimes gW_{\Pi_\mathbf{f}^S}^\mathrm{sph}\right)\cdot \left(\prod_{\substack{p\in S_0\\ \Phi_p\notin \mathcal{S}_0(\mathbf{Q}_p^2)}} L_p(\omega_{\Pi_\mathbf{f}},0)^{-1}\right)\in\prod_{p\in S_0}\left\langle p-1, L_p^\mathrm{As}(\mathbf{f},1-(t_1+t_2))^{-1}\right\rangle\subseteq \mathcal{O}_{\mathbf{L}_\mathbf{f}}.\end{align}
\end{enumerate}
    In particular, if \textcolor{Black}{$h_E^+$ denotes the narrow class number of $E$, $(h_E^+\cdot \#(\mathcal{O}_E/\mathfrak{n})^\times ,\ell)=1$, and $\epsilon_\mathbf{f}(\varpi_p)\neq 1$ for all $p\in S_0\cap \{p:\Phi_p\notin \mathcal{S}_0(\mathbf{Q}_p^2)\}$}, then the containment in \eqref{eq: 27} holds without the bracketed product of Tate $L$-factors on the left. \textcolor{Black}{The same is true if $S_0\cap \{p:\Phi_p\notin \mathcal{S}_0(\mathbf{Q}_p^2)\}$ is empty.}
\end{thm}
\begin{proof}
    The period $\mathcal{Z}_\mathbf{f}$ can be written down explicitly as the product, over all $p\notin S$, of normalized local periods $\mathcal{Z}_p$ on $\mathcal{S}(\mathbf{Q}_p^2)\otimes\Pi_{\mathbf{f},p}$. For inert primes, this local period is the one of \Cref{lem non-zero Asai periods}(1), and for split primes it is the one of \cite[Definition $6.4.4$]{groutides2024rankinselbergintegralstructureseuler}. Then 
    $$\mathcal{Z}_\mathbf{f}\left(\Phi\otimes gW_{\Pi_\mathbf{f}^S}^\mathrm{sph}\right)=\left\{\prod_{\substack{p\notin S\\ p=v_1v_2}}\mathcal{Z}_p\left(\Phi_p\otimes g_p\left(W_{\Pi_{\mathbf{f},v_1}}^\mathrm{sph}\otimes W_{\Pi_{\mathbf{f},v_2}}^\mathrm{sph}\right) \right)\right\}\times \left\{\prod_{\substack{p\notin S\\ p=v}}\mathcal{Z}_p\left(\Phi_p\otimes g_pW_{\Pi_{\mathbf{f},v}}^\mathrm{sph}\right)\right\}$$
    where for almost all primes $p$, each local piece is equal to $1$.
    Let $p\notin S$, and let $u$ be a place of $E$ above $p$. By \cite[ \S $7.2.4$]{grossi2020norm}, the Satake parameters of the unramified $G(E_u)$-principal-series $\Pi_{\mathbf{f},u}$, are given by $q_u^{-1/2}\alpha_u,q_u^{-1/2}\beta_u$, where $\alpha_u,\beta_u$ are the roots of the polynomial $X^2-\lambda_u(\mathbf{f})X+q_u^{w-1}\omega_{\Pi_\mathbf{f}}(\varpi_u)$ and $q_u$ denotes the size of the residue field of $E_u$. Thus, the spherical Hecke eigenvalues of $\Pi_{\mathbf{f},u}$ are given by $\lambda_u(\mathbf{f})$ and $q_u^{w-2}\omega_{\Pi_\mathbf{f}}(\varpi_u)$, which are both elements of $\mathcal{O}_{\mathbf{L}_\mathbf{f}}$. Hence, for every $p\notin S$, the spherical Hecke eigensystem for the $\mathscr{G}(\mathbf{Q}_p)$-representation $\Pi_{\mathbf{f},p}$ restricts to a morphism $\mathcal{H}_{\mathscr{G}(\mathbf{Q}_p)}^\circ(\mathcal{O}_{\mathbf{L}_\mathbf{f}})\rightarrow \mathcal{O}_{\mathbf{L}_\mathbf{f}}.$ The integrality and the containment in \eqref{eq: 27} then follow by an application of \Cref{prop integral period 1} for the local periods at inert primes, \cite[Theorem $6.2.1$]{groutides2024rankinselbergintegralstructureseuler} for the local periods at split primes, and \cite[Corollary $7.2.4$]{grossi2020norm} which gives that $L_p^\mathrm{As}(\Pi_\mathbf{f},s)=L_p^\mathrm{As}(\mathbf{f},1+s-(t_1+t_2))$ for all $p\notin S$. For the last part, recall that $\omega_{\Pi_\mathbf{f}}$ is given by $|\cdot|_{\mathbf{A}_E}^{2-w}\epsilon_\mathbf{f}$ and $\epsilon_\mathbf{f}$ is of finite order dividing $h_E^+\cdot \#(\mathcal{O}_E/\mathfrak{n})^\times$. \textcolor{Black}{Thus if the further assumptions on $h_E^+$ and $\epsilon_\mathbf{f}$ hold, the bracketed product of Tate $L$-factors can be removed from $\eqref{eq: 27}$: Indeed, $L_p(\omega_{\Pi_\mathbf{f}},0)^{-1}=(1-\epsilon_\mathbf{f}(\varpi_p))-(p^{2(w-2)}-1)\epsilon_\mathbf{f}(\varpi_p)$. Hence, from the coprimality condition and the assumption $\epsilon_\mathbf{f}(\varpi_p)\neq1$, the term $1-\epsilon_\mathbf{f}(\varpi_p)$ is a unit in $\mathcal{O}_{\mathbf{L}_\mathbf{f}}$. If $p-1\not\equiv0\mod\ell$, then it generates the unit ideal. Hence, by the proof of the first part or \Cref{prop integral period 1}(1), we may assume that $p\equiv 1\mod\ell$. In this case we have $v_\ell\left((p^{2(w-2)}-1)\epsilon_\mathbf{f}(\varpi_p)\right)>0$ and hence $L_p(\omega_{\Pi_\mathbf{f}},0)^{-1}$ is an $\ell$-adic unit so we are done.}
\end{proof}

 \section{Asai-Flach norm relations}
 In this final section, we study the integral behavior of the motivic Asai-Flach tame norm relations. Once again by adapting and extending our earlier work in \cite{groutides2024rankinselbergintegralstructureseuler}, and the approach in \cite{Loeffler_2021}, we show in \Cref{thm euler system}$(1)$, that the motivic Asai-Flach Euler system tame norm relations, are optimal in the strongest possible integral sense; i.e., any construction of this type, with any choice of integral input data in the sense of \cite{Loeffler_2021}, does not only yield motivic classes which lie in the integral \'etale realization (\Cref{Prop integral classes} \&  \cite[Proposition $9.5.2$]{Loeffler_2021}), but also local factors which are always integrally divisible by the Asai Euler factor $\mathcal{P}_{p,\mathrm{As}^*}^{'}(\mathrm{Frob}_p^{-1})$, modulo $\scalemath{0.9}{p-1}$, proving a conjecture of Loeffler in this setup. 

 Additionally, by carefully choosing one such integral collection, we are able to obtain the most general version of integral Asai-Flach tame norm relations; i.e. we obtain motivic integral classes which satisfy tame norm relations, for all unramified primes, in motivic cohomology, with the expected Euler factor $\mathcal{P}_{p,\mathrm{As}^*}^{'}(\mathrm{Frob}_p^{-1})$. Finally, as mentioned in the introduction, the proof also holds for non-trivial coefficient sheaves, and if set up properly, \Cref{thm euler system}$(2)$ recovers \cite[Theorem $7.3.2$]{grossi2020norm} after passing to Galois cohomology.

\subsection{Passing to a smaller group}\label{sec setup}
 Once again, let $E/\mathbf{Q}$ be a totally real quadratic field. For applications to the Asai-Flach Euler system, we generally want to work with certain slightly smaller groups which allow for cyclotomic base change. Following \cite{lei2018euler} (and consequently \cite{dimitrov2004galoisrepresentationsmodulop}) we keep the definition of the group $\mathscr{G}=\mathrm{Res}_{E/\mathbf{Q}}\GL_2$ from the previous section, and we also define the following algebraic groups:
 $$D:=\mathrm{Res}_{E/\mathbf{Q}}\GL_1,\ \ \mathscr{G}^*:=\mathscr{G}\times_D\GL_1.$$
 fibered over the determinant map. We have natural embeddings $G\hookrightarrow\mathscr{G}$ and $G\hookrightarrow \mathscr{G}^*, g\mapsto (g,\det(g))$. Finally, $\mathscr{G}^*$ itself admits a natural embedding into $\mathscr{G}$ via projection to the first factor. It is clear that
 $$\scalemath{0.9}{\mathscr{G}^*(\mathbf{Q}_p)=\begin{dcases}
    G(E_{v_1})\times_{\mathbf{Q}_p^\times} G(E_{v_2}),\ &\mathrm{if}\  p=v_1v_2\ \mathrm{split}\\
    \{g\in G(E_v)\ |\ \det(g)\in\mathbf{Q}_p^\times\},\ &\mathrm{if}\  p=v\ \mathrm{inert.}
\end{dcases}}$$
 We again have a natural maximal open compact subgroup $\mathscr{G}^*(\mathbf{Z}_p)\subseteq \mathscr{G}^*(\mathbf{Q}_p)$, given by 
 $$\scalemath{0.9}{\mathscr{G}^*(\mathbf{Z}_p):=\begin{dcases}
    G(\mathcal{O}_{E_{v_1}})\times_{\mathbf{Z}_p^\times} G(\mathcal{O}_{E_{v_2}}),\ &\mathrm{if}\  p=v_1v_2\ \mathrm{split}\\
    \{g\in G(\mathcal{O}_{E_v})\ |\ \det(g)\in\mathbf{Z}_p^\times\},\ &\mathrm{if}\  p=v\ \mathrm{inert.}
\end{dcases}}$$
 and a canonical determinant open compact level subgroup, given by $$\scalemath{0.9}{\mathscr{G}^*(\mathbf{Z}_p)[p]:=\{g\in\mathscr{G}^*(\mathbf{Z}_p)\ |\ \det(g)\in 1\textcolor{Black}{+p\mathbf{Z}_p}\}}
 $$ which makes perfect sense for this smaller group $\mathscr{G}^*$, and there is no ambiguity.
  As usual, we write $\mathcal{H}_{\mathscr{G}(\mathbf{Q}_p)}^\circ$, respectively  $\mathcal{H}_{\mathscr{G}^*(\mathbf{Q}_p)}^\circ$, for the spherical Hecke algebra of $\mathscr{G}(\mathbf{Q}_p)$, respectively $\mathscr{G}^*(\mathbf{Q}_p)$, with respect to $\mathscr{G}(\mathbf{Z}_p)$, respectively $\mathscr{G}^*(\mathbf{Z}_p)$. These are once again commutative, unital, convolution algebras where the Haar measures are normalized as usual to give $\mathscr{G}(\mathbf{Z}_p)$ and $\mathscr{G}^*(\mathbf{Z}_p)$ volume $1$, respectively. Once again, we will often canonically identify $\mathbf{Q}_p\simeq E_{v_1}\simeq E_{v_2}$ if $p=v_1v_2$ splits, and $\mathbf{Q}_{p^2}\simeq E_v=:E_p$ if $p=v$ is inert, in $E.$

 It is not hard to see that  for $p$ unramified in $E$, the group $\mathscr{G}^*(\mathbf{Q}_p)$ admits almost the same Cartan decomposition as the larger group $\mathscr{G}(\mathbf{Q}_p)$ but with $\mathscr{G}^*(\mathbf{Z}_p)$-double cosets instead of $\mathscr{G}(\mathbf{Z}_p)$-double cosets. In other words 
 $$\mathscr{G}^*(\mathbf{Q}_p)=\begin{dcases}\bigsqcup_{\substack{n_1\geq n_2\\ m_1\geq m_2\\ n_1+n_2=m_1+m_2}} \mathscr{G}^*(\mathbf{Z}_p) (t(n_1,n_2), t(m_1,m_2))\mathscr{G}^*(\mathbf{Z}_p),\ &\mathrm{if}\ p\ \mathrm{splits}\\
 \bigsqcup_{\substack{n_1\geq n_2}} \mathscr{G}^*(\mathbf{Z}_p)t(n_1,n_2) \mathscr{G}^*(\mathbf{Z}_p),\ &\mathrm{if}\ p\ \mathrm{is}\ \mathrm{inert}
 \end{dcases}$$
 where recall that $t(a,b)=\mathrm{diag}(p^a,p^b)$. Using this, it follows with a bit of work that for every unramified $p$, we have an injective Hecke algebra homomorphism
 $$\iota_p:\mathcal{H}_{\mathscr{G}^*(\mathbf{Q}_p)}^\circ\hookrightarrow \mathcal{H}_{\mathscr{G}(\mathbf{Q}_p)}^\circ\ ,\ \ \  \ch\left(\mathscr{G}^*(\mathbf{Z}_p)\ g\ \mathscr{G}^*(\mathbf{Z}_p)\right)\mapsto\ch(\mathscr{G}(\mathbf{Z}_p)\ g\  \mathscr{G}(\mathbf{Z}_p))$$
We set for each $a\in\mathbf{Z}_{\geq 0}$
\begin{align*}\mathcal{T}_*(p^a)&:=\begin{dcases}
    \ch(\mathscr{G}^*(\mathbf{Z}_p) (t(p^a,\textcolor{Black}{0}),t(p^a,\textcolor{Black}{0}))\mathscr{G}^*(\mathbf{Z}_p)),\ &\mathrm{if}\ p\ \mathrm{splits}\\
    \ch(\mathscr{G}^*(\mathbf{Z}_p) t(p^a,\textcolor{Black}{0})\mathscr{G}^*(\mathbf{Z}_p)),\ &\mathrm{if}\ p\ \mathrm{is}\ \mathrm{inert}.
\end{dcases}\\
\mathcal{S}_*(p)&:=\begin{dcases}
    \ch(\mathscr{G}^*(\mathbf{Z}_p) (t(p,p),t(p,p))\mathscr{G}^*(\mathbf{Z}_p)),\ &\mathrm{if}\ p\ \mathrm{splits}\\
    \ch(\mathscr{G}^*(\mathbf{Z}_p) t(p,p)\mathscr{G}^*(\mathbf{Z}_p)),\ &\mathrm{if}\ p\ \mathrm{is}\ \mathrm{inert}.
    \end{dcases}
\end{align*}
The Hecke operators $\mathcal{T}_*(p^a)$ and $\mathcal{S}_*(p)$ are clearly contained in $\mathcal{H}_{\mathscr{G}^*(\mathbf{Q}_p)}^\circ$. Additionally, in our previous notation and the notation of \cite[\S $2.1$]{groutides2024rankinselbergintegralstructureseuler}, we have $\iota_p(\mathcal{T}_*(p))=\mathcal{T}_{p,1}\mathcal{T}_{p,2}$, $\iota_p(\mathcal{S}_*(p))=\mathcal{S}_{p,1}\mathcal{S}_{p,2}$ if $p$ splits, and $\iota_p(\mathcal{T}_*(p))=\mathcal{T}_{E_p}, \iota_p(\mathcal{S}_*(p))=\mathcal{S}_{E_p}$ if $p$ is inert.
\begin{defn}
    We define the local Asai Euler factor $\mathcal{P}_{p,\mathrm{As}_*}(X)\in\mathcal{H}_{\mathscr{G}^*(\mathbf{Q}_p)}^\circ(\mathbf{Z}[1/p])[X]$ by 
    $$\scalemath{0.9}{\mathcal{P}_{p,\mathrm{As}_*}(X):=\begin{dcases}
        1-\tfrac{1}{p}\mathcal{T}_*(p)X+\left(\tfrac{1}{p^2}\mathcal{T}_*(p)^2-\tfrac{1}{p^2}\mathcal{T}_*(p^2)-\mathcal{S}_*(p)\right)X^2-\tfrac{1}{p}\mathcal{S}_*(p)\mathcal{T}_*(p)X^3+ \mathcal{S}_*(p)^2 X^4,\ &\mathrm{if}\ p\ \mathrm{splits}\\
        \left(1-\tfrac{1}{p}\mathcal{T}_*(p)X+\mathcal{S}_*(p)X^2\right)\left(1-\mathcal{S}_*(p) X^2\right),\ &\mathrm{if}\ p\ \mathrm{is}\ \mathrm{inert}.
    \end{dcases}}$$
\end{defn}
\begin{rem}
Using a simple computation in the spherical Hecke algebra of $G(\mathbf{Q}_p)$, it follows that for split $p$, we have $\iota_p\left(\mathcal{P}_{p,\mathrm{As}_*}\right)(X)=\mathcal{P}_p(X)$ where $\mathcal{P}_p(X)$ is as in \cite[\S $4.3$]{groutides2024rankinselbergintegralstructureseuler}. On the other hand, if $p$ is inert, then $\iota_p\left(\mathcal{P}_{p,\mathrm{As}_*}\right)(X)=\mathcal{P}_{E_p,\mathrm{As}}(X)$ where $\mathcal{P}_{E_p,\mathrm{As}}(X)$ is as in \Cref{lem Asai Euler factor}.
\end{rem}

It is important to note that
the cyclicity (and freeness) of the $\mathcal{H}_{\mathscr{G}^*(\mathbf{Q}_p)}^\circ$-modules $\mathcal{I}(\mathscr{G}^*(\mathbf{Q}_p)/\mathscr{G}^*(\mathbf{Z}_p))$ still holds for the smaller groups $\mathscr{G}^*(\mathbf{Q}_p)$ instead of $\mathscr{G}(\mathbf{Q}_p)$.  

\begin{prop}
   Let $p$ be unramified in $E$. The module $\mathcal{I}(\mathscr{G}^*(\mathbf{Q}_p)/\mathscr{G}^*(\mathbf{Z}_p))$ is cyclic and free over $\mathcal{H}_{\mathscr{G}^*(\mathbf{Q}_p)}^\circ$, generated by $\ch(\mathbf{Z}_p^2)\otimes\ch(\mathscr{G}^*(\mathbf{Z}_p))$, and there's a $\mathcal{H}_{\mathscr{G}^*(\mathbf{Q}_p)}^\circ$-equivariant morphism $$i_p:\mathcal{I}(\mathscr{G}^*(\mathbf{Q}_p)/\mathscr{G}^*(\mathbf{Z}_p))\hookrightarrow \mathcal{I}(\mathscr{G}(\mathbf{Q}_p)/\mathscr{G}(\mathbf{Z}_p)),\ \phi\otimes \ch(g \mathscr{G}^*(\mathbf{Z}_p)) \mapsto \phi\otimes \ch(g \mathscr{G}(\mathbf{Z}_p))$$
where the Hecke action on the right is via $\iota_p$.
    \begin{proof}
        The cyclicity proof is essentially identical to the proof of \cite[Theorem $3.1.1$]{groutides2024rankinselbergintegralstructureseuler} if $p$ splits, and \Cref{cor cyclicity of I} if $p$ is inert. The Hecke equivariance of the map $i_p$ (for both split and inert primes) basically follows from the following fact: If $F/\mathbf{Q}_p$ is a finite unramified extension (possibly $F=\mathbf{Q}_p)$, and $\lambda\in\mathbf{Z}_{\geq 0}$ then every coset representative appearing in the decomposition
        $$\scalemath{0.9}{G(\mathcal{O}_F)\left[\begin{smallmatrix}
            p^\lambda & \\
            & 1
        \end{smallmatrix}\right] G(\mathcal{O}_F)=\left(\bigsqcup_{\beta\in \mathcal{O}_F/p^{\lambda}\mathcal{O}_F}\left[\begin{smallmatrix}
            p^{\lambda} & \beta\\
            & 1
        \end{smallmatrix}\right] G(\mathcal{O}_F)\right)  \sqcup \left(\bigsqcup_{\substack{0<i<\lambda\\ \\ \beta\in(\mathcal{O}_F/p^{i}\mathcal{O}_F)^\times}}\left[\begin{smallmatrix}
            p^{i} & \beta\\
            &p^{\lambda-i}
        \end{smallmatrix}\right] G(\mathcal{O}_F)\right)\sqcup\left[\begin{smallmatrix}
            1 & \\
            &p^{\lambda}
        \end{smallmatrix}\right] G(\mathcal{O}_F)}$$
        can be written as $k_1\left[\begin{smallmatrix}
            p^\lambda & \\
            & 1
        \end{smallmatrix}\right]k_2$ with $k_1,k_2\in G(\mathcal{O}_F)$ and $\det(k_1)=\det(k_2)=1$.
        Freeness follows from freeness for the bigger groups together with the existence and properties of the maps $\iota_p$ and $i_p$.
    \end{proof}
\end{prop}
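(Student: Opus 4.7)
The plan is to adapt the cyclicity arguments already developed in this excerpt (and in \cite{groutides2024rankinselbergintegralstructureseuler}) for the larger group $\mathscr{G}$, and to leverage the natural inclusion $\mathscr{G}^*(\mathbf{Q}_p)\hookrightarrow\mathscr{G}(\mathbf{Q}_p)$ to transport freeness along $i_p$. First I would establish cyclicity of $\mathcal{I}(\mathscr{G}^*(\mathbf{Q}_p)/\mathscr{G}^*(\mathbf{Z}_p))$ over $\mathcal{H}_{\mathscr{G}^*(\mathbf{Q}_p)}^\circ$ using the generalized Cartan decomposition of $\mathscr{G}^*(\mathbf{Q}_p)$ recorded in \Cref{sec setup}. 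Since this decomposition has the same shape as that for $\mathscr{G}(\mathbf{Q}_p)$ (with the added constraint $n_1+n_2=m_1+m_2$ at split $p$, and unchanged at inert $p$), the inductive argument of \Cref{thm cyclicity of M} for inert $p$, respectively of [\cite{groutides2024rankinselbergintegralstructureseuler} Theorem $3.1.1$] for split $p$, carries over with only bookkeeping changes. At each step, one expresses the indicator of a $\mathscr{G}^*(\mathbf{Z}_p)$-double coset, modulo indicators of smaller double cosets in the lexicographic ordering used earlier, as a convolution of Hecke operators in $\mathcal{H}_{\mathscr{G}^*(\mathbf{Q}_p)}^\circ$ acting on $\ch(\mathbf{Z}_p^2)\otimes \ch(\mathscr{G}^*(\mathbf{Z}_p))$, and concludes by induction.

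Second, I would construct and verify Hecke equivariance of $i_p$. Well-definedness at the level of coinvariants is immediate from $\mathscr{G}^*(\mathbf{Z}_p)\subseteq\mathscr{G}(\mathbf{Z}_p)$. For equivariance, it suffices to test on generators of $\mathcal{H}_{\mathscr{G}^*(\mathbf{Q}_p)}^\circ$, namely $\mathcal{T}_*(p^\lambda)$ and $\mathcal{S}_*(p)^{\pm 1}$ (with the obvious split refinement). Here the Iwahori-type decomposition stated in the proof sketch is the key combinatorial input: every left coset representative of $G(\mathcal{O}_F)$ inside $G(\mathcal{O}_F)\mathrm{diag}(p^\lambda,1)G(\mathcal{O}_F)$ can be written as $k_1\mathrm{diag}(p^\lambda,1)k_2$ with $k_1,k_2\in G(\mathcal{O}_F)$ and $\det k_i=1$. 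Consequently, each such representative lies in $\mathscr{G}^*(\mathbf{Q}_p)$, so the left-coset decomposition of a $\mathscr{G}^*$-double coset within $\mathscr{G}^*(\mathbf{Q}_p)$ is in bijection, via the obvious identification, with that of the corresponding $\mathscr{G}$-double coset within $\mathscr{G}(\mathbf{Q}_p)$. Hecke equivariance then follows by direct calculation on decomposable elements of the coinvariant space.

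Finally, I would deduce freeness. Given the cyclicity of step one, set $v_0:=\ch(\mathbf{Z}_p^2)\otimes \ch(\mathscr{G}^*(\mathbf{Z}_p))$; every element of $\mathcal{I}(\mathscr{G}^*(\mathbf{Q}_p)/\mathscr{G}^*(\mathbf{Z}_p))$ is of the form $\theta\cdot v_0$ for some $\theta\in\mathcal{H}_{\mathscr{G}^*(\mathbf{Q}_p)}^\circ$. If $\theta\cdot v_0=0$, applying $i_p$ and the equivariance from step two gives $\iota_p(\theta)\cdot i_p(v_0)=0$ in $\mathcal{I}(\mathscr{G}(\mathbf{Q}_p)/\mathscr{G}(\mathbf{Z}_p))$. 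Since $i_p(v_0)$ is the cyclic generator of the latter, and this module is free over $\mathcal{H}_{\mathscr{G}(\mathbf{Q}_p)}^\circ$, we conclude $\iota_p(\theta)=0$, whence $\theta=0$ by injectivity of $\iota_p$. This yields freeness and, as a byproduct, injectivity of $i_p$.

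The main obstacle is the Hecke equivariance of $i_p$: though conceptually natural, it rests entirely on the determinant-one refinement of the Iwahori-type coset decomposition, ensuring that left coset representatives inside $\mathscr{G}$-double cosets can always be chosen in $\mathscr{G}^*$. Without this compatibility, the map $i_p$ would respect the underlying spaces but fail to intertwine the Hecke actions through $\iota_p$, and the whole reduction to the bigger group would collapse.
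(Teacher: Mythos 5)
Your proposal is correct and follows essentially the same route as the paper's proof: cyclicity by transporting the inductive arguments (for split $p$ from the Rankin--Selberg case, for inert $p$ from \Cref{thm cyclicity of M} and \Cref{cor cyclicity of I}) to the smaller group using the matching shape of the Cartan decomposition of $\mathscr{G}^*(\mathbf{Q}_p)$; Hecke equivariance of $i_p$ via the determinant-one refinement of the Iwahori-type coset decomposition, which is exactly the key combinatorial fact the paper highlights; and freeness by pushing an annihilating Hecke operator through $i_p$ and $\iota_p$ into the free module for the larger group. You merely spell out the last step more explicitly than the paper does.
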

\begin{defn}\label{def freeness*}
  Consequently, for an element $\delta_p^*\in\mathcal{I}(\mathscr{G}^*(\mathbf{Q}_p)/\mathscr{G}^*(\mathbf{Z}_p))$, we write $\mathcal{P}_{\delta_p^*}$ for the unique Hecke operator in $\mathcal{H}_{\mathscr{G}^*(\mathbf{Q}_p)}^\circ$ which satisfies $\mathcal{P}_{\delta_p^*}\cdot (\ch(\mathbf{Z}_p^2)\otimes \ch(\mathscr{G}^*(\mathbf{Z}_p)))=\delta_p^*$.
  \end{defn}
\subsection{Integrality and local factors}
We will state the results in motivic cohomology with trivial coefficients for ease of notation and exposition. However, we emphasize that the proof is exactly the same for non-trivial coefficient \textcolor{Black}{sheaves} as well. See for example \cite{loeffler2021euler},\cite{grossi2020norm}\ \text{\&}\ \cite{Loeffler_2021} for the formalisms required to set things up with non-trivial coefficients. For a lot of the definitions regarding certain constructions appearing below, we refer the reader to \cite[\S $8$ \& \S $9$]{Loeffler_2021}, \cite[\S $5$ \& \S $6$]{grossi2020norm} and \cite[\S $5$]{groutides2024rankinselbergintegralstructureseuler}.

Let $S$ be a finite set of primes containing $2$ and all primes that ramify in $E$. We write $\mathscr{G}^*_S$ for $\mathscr{G}^*(\prod_{p\in S}\mathbf{Q}_p)$. We fix once and for all an open compact subgroup $\mathscr{K}\subseteq\mathscr{G}^*_S$ and an integral element $\delta_S^*\in\mathcal{I}_0(\mathscr{G}^*_S/\mathscr{K},\mathbf{Z})$, where here the $\mathcal{I}$-lattice is defined as usual (e.g. see \cite[\S $3$]{Loeffler_2021} for this more general notation). Let $n$ be a positive integer coprime to $S$, we define the open compact of $\mathscr{G}^*(\mathbf{A}_\mathrm{f})$:
$$\mathscr{K}_S[n]:=\mathscr{K}\times \prod_{\substack{p\notin S\\ p\nmid n}} \mathscr{G}^*(\mathbf{Z}_p) \times\prod_{\substack{p\notin S\\ p|n}}\mathscr{G}^*(\mathbf{Z}_p)[p].$$
For ease of notation, we simply write $\mathscr{H}_S$ for $\mathscr{H}_S[1]$. We follow the notation of \cite[\S $5.2$]{groutides2024rankinselbergintegralstructureseuler} and thus for \textit{any} collection of integral elements $\underline{\delta}^*:=(\delta_p^*)_{p\notin S}$ we set 
$$\delta[n]^*_p:=\begin{dcases}
    \ch(\mathbf{Z}_p^2)\otimes \ch(\mathscr{G}^*(\mathbf{Z}_p)),\ &\mathrm{if}\ p\nmid n\\
    \delta^*_p,\ &\mathrm{if}\ p|n.
\end{dcases}$$
It is important to note that $\delta[n]_p^*$ only depends on primes $p|n$.
Finally (suppressing the dependence on $\delta_S^*$ which is fixed throughout), we write $\delta[n]^*:=\delta_S^*\otimes \bigotimes_{p\notin S}\delta[n]^*_p.$ 
We denote by $Y_{\mathscr{G}^*}$ the infinite level Shimura variety attached to $\mathscr{G}^*$. Similarly to \cite[\S$6.1$]{grossi2020norm}, we have an Asai-Flach map:
$$\mathcal{AF}:\mathcal{S}_0(\mathbf{A}_\mathrm{f}^2,\mathbf{Q})\otimes_\mathbf{Q} C_c^\infty(\mathscr{G}^*(\mathbf{A}_\mathrm{f}),\mathbf{Q})\longrightarrow H^3_\mathrm{mot}(Y_{\mathscr{G}^*}, \mathbf{Q}(2))$$
which is $\mathscr{G}^*(\mathbf{A}_\mathrm{f})\times G(\mathbf{A}_\mathrm{f})$-equivariant in the sense of \cite[\S $3$]{Loeffler_2021}. In particular, it makes sense to evaluate this map on elements in the $\mathcal{I}$-spaces of $G$-coinvariants.
 
\begin{defn}\label{def classes}
    For any collection $\underline{\delta}^*=(\delta_p^*)_{p\notin S}$ of local integral input data with determinant level, $\delta_p^*\in\scalemath{0.9}{\mathcal{I}(\mathscr{G}^*(\mathbf{Q}_p)/\mathscr{G}^*(\mathbf{Z}_p)[p],\mathbf{Z}[1/p])}$, we define the motivic Asai-Flach classes
    \begin{align}\label{af classes}
        z_{\mathrm{mot},n}(\underline{\delta}^*):=\mathcal{AF}\left(\delta[n]^*\right)\in H^3_\mathrm{mot}(Y_{\mathscr{G}^*}(\mathscr{H}_S[n]),\mathbf{Q}(2)),\ \ (n,S)=1.
    \end{align}
\end{defn}
We once again note that for a fixed integral collection $\underline{\delta}^*$, the class $z_{\mathrm{mot},n}(\underline{\delta}^*)$ depends on the local elements $\delta_p^*$ for $p|n$. Working with the smaller group $\mathscr{G}^*$ instead of $\mathscr{G}$, means that we have an embedding $\mathscr{G}^*\hookrightarrow \mathscr{G}^*\times \GL_1, (g,x=\det(g))\mapsto ((g,x),x)$. Thus, for each positive integer $n$ coprime to $S$, we have an induced open and closed embedding $Y_{\mathscr{G}^*}(\mathscr{H}_S[n])\hookrightarrow Y_{\mathscr{G}^*}(\mathscr{H}_S)\times_{\mathrm{Spec}(\mathbf{Q})}\mathrm{Spec}(\mathbf{Q}(\mu_n))$, \textcolor{Black}{defined using the natural map $Y_{\mathscr{G}^*}(\mathscr{H}_S[n])\rightarrow Y_{\mathscr{G}^*}(\mathscr{H}_S)$.} \textcolor{Black}{This} is in fact an isomorphism by \cite[Proposition $5.4.2$]{loeffler2021euler}. Pushing forward the class $z_{\mathrm{mot},n}(\underline{\delta}^*)$ along this map, we obtain classes \begin{align}\label{AF cycl classes}\mathrm{AF}_{\mathrm{mot},n}(\underline{\delta}^*)\in H_{\mathrm{mot}}^3(Y_{\mathscr{G}^*}(\mathscr{H}_S)\times_{\mathrm{Spec}(\mathbf{Q})}\mathrm{Spec}(\mathbf{Q}(\mu_n)),\mathbf{Q}(2)),\ \ (n,S)=1.\end{align}
The action of the Hecke operator  $\ch(\mathscr{G}^*(\mathbf{Z}_p)\ g\ \mathscr{G}^*(\mathbf{Z}_p))$ on the $z$-classes is intertwined with the action of the operator $\ch(\mathscr{G}^*(\mathbf{Z}_p)\ g\ \mathscr{G}^*(\mathbf{Z}_p))\times \mathrm{Frob}_p^{v_p(\det(g))}$ on the $\mathrm{AF}$-classes. Here $\mathrm{Frob}_p$ denotes arithmetic Frobenius at $p$ as an element of $\mathrm{Gal}(\mathbf{Q}(\mu_n)/\mathbf{Q}).$ 
\subsubsection{Integral variants}
Both the $z$-classes and the $\mathrm{AF}$-classes, admit certain integral variants: Let $c\in\mathbf{Z}_{>1}$ coprime to $6S$. Then for any $n\in\mathbf{Z}_{\geq 1}$ coprime to $6cS$, we have classes
\begin{align*}
    _cz_{\mathrm{mot},n}(\underline{\delta}^*)&:=\left(c^2-\prod_{p|c}\mathcal{S}_*(p)^{v_p(c)}\right)\cdot z_{\mathrm{mot},n}(\underline{\delta}^*)\\
    _c\mathrm{AF}_{\mathrm{mot},n}(\underline{\delta}^*)&:=\left(c^2-\prod_{p|c}\mathcal{S}_*(p)^{v_p(c)}\mathrm{Frob}_p^{2v_p(c)}\right)\cdot \mathrm{AF}_{\mathrm{mot},n}(\underline{\delta}^*).
\end{align*}
Of course, these definitions are compatible with one another under pushforward by the intertwining property mentioned above. For more details on how these classes arise, we refer the reader to \cite[\S $7$]{loeffler2021euler}. 

\begin{prop}\label{Prop integral classes}
    Let $c\in\mathbf{Z}_{>1}$ coprime to $6S$ and let $\underline{\delta}^*=(\delta_p^*)_{p\notin S}$, be any collection of local integral input data with determinant level $\delta_p^*\in \mathcal{I}(\mathscr{G}^*(\mathbf{Q}_p)/\mathscr{G}^*(\mathbf{Z}_p)[p],\mathbf{Z}[1/p])$. Then for any prime $\ell$, coprime to $nS$, the image of the class
    $$_c\mathrm{AF}_{\mathrm{mot},n}(\underline{\delta}^*), \ \ \ (n,cS)=1$$
    under the $\ell$-adic \'etale regulator map, is integral. (i.e. lies in the \'etale cohomology with $\mathbf{Z}_\ell$-coefficients).
\end{prop}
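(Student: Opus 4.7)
The plan is to follow the strategy of [\cite{Loeffler_2021} Proposition $9.5.2$], adapting it to the present Asai-Flach setup. Since the assertion is $\mathbf{Z}[1/p]$-linear in each local coordinate $\delta_p^*$ for $p\mid n$, and since the coprimality hypothesis $(\ell,nS)=1$ ensures that $\mathbf{Z}[1/p]\hookrightarrow \mathbf{Z}_{(\ell)}$ for every such $p$, it suffices to verify the integrality claim on an arbitrary elementary generator, i.e., on an input of the form
\[
\delta_p^*=\phi_p\otimes \mathrm{ch}(g_p\mathscr{G}^*(\mathbf{Z}_p)[p])
\]
with $\phi_p$ valued in $\mathrm{vol}_{G(\mathbf{Q}_p)}(\mathrm{Stab}_{G(\mathbf{Q}_p)}(\phi_p)\cap g_p\mathscr{G}^*(\mathbf{Z}_p)[p]g_p^{-1})^{-1}\cdot \mathbf{Z}[1/p]$, for each $p\mid n$.

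Next, I would unwind the construction of $_c\mathrm{AF}_{\mathrm{mot},n}(\underline{\delta}^*)$: by the definition of the Asai--Flach map $\mathcal{AF}$, the class $_c z_{\mathrm{mot},n}(\underline{\delta}^*)$ is obtained, up to the $c$-smoothing factor, by pushing forward the $c$-smoothed Siegel unit $_cg_{\mathrm{Siegel}}\in H^1_{\mathrm{mot}}(Y_G,\mathbf{Q}(1))$ along the composition of the closed embedding of Shimura varieties $\iota:Y_G\hookrightarrow Y_{\mathscr{G}^*}$ and the right-translation-by-$g_p$ map, then weighting by $\phi_p$ interpreted as a locally constant function on the relevant finite-level quotient. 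The local factor $c^2-\prod_{p\mid c}\mathcal{S}_*(p)^{v_p(c)}\mathrm{Frob}_p^{2v_p(c)}$ then translates to the usual $c$-smoothing of Siegel units under $\iota_*$, as in [\cite{loeffler2021euler} \S $7$]. Pushing forward $ _c\mathrm{AF}_{\mathrm{mot},n}(\underline{\delta}^*)$ to $Y_{\mathscr{G}^*}(\mathscr{H}_S)\times_{\mathbf{Q}}\mathbf{Q}(\mu_n)$ is compatible with the étale realization and preserves integral lattices because the relevant map of Shimura varieties is finite étale.

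The key input is then the well-known integrality of $c$-smoothed Siegel units: the image of $_cg_{\mathrm{Siegel}}$ under the $\ell$-adic regulator lies in $H^1_{\mathrm{\acute{e}t}}(Y_G,\mathbf{Z}_\ell(1))$ whenever $(c,\ell)$ is controlled, and one reduces the general case $\ell\mid c$ to $(c,\ell)=1$ via the standard factorization of the $c$-smoothing operator. Pushforward along the finite étale map $\iota\circ [g_p]$ preserves the $\mathbf{Z}_\ell$-integral étale lattice, up to a factor equal to the degree $[\mathrm{Stab}_{G(\mathbf{Q}_p)}(\phi_p):\mathrm{Stab}_{G(\mathbf{Q}_p)}(\phi_p)\cap g_p\mathscr{G}^*(\mathbf{Z}_p)[p]g_p^{-1}]$ appearing in the denominator; but this denominator is precisely cancelled by the prescribed normalization of the values of $\phi_p$, which was the whole point of the volume condition defining the lattice $\mathcal{I}_0(\mathscr{G}^*(\mathbf{Q}_p)/\mathscr{G}^*(\mathbf{Z}_p)[p],\mathbf{Z}[1/p])$.

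The main obstacle, and the only genuinely non-formal point, is to establish this last cancellation cleanly: one must verify that the degree of the relevant covering of Shimura varieties (at level $\mathscr{H}_S[n]$ after translating by $g_p$ and restricting to the support of $\phi_p$) is exactly the index reciprocal appearing in the definition of the integral lattice. This matches [\cite{Loeffler_2021} Proposition $9.5.2$] verbatim once one replaces the group $H\times H$ (respectively $\mathrm{GU}(2,1)$) setup of \textit{loc.\,cit.} by $(G,\mathscr{G}^*)$; the argument carries over essentially unchanged because the Shimura-variety geometry and the étale realization of Siegel units used there are insensitive to which reductive group plays the role of the ambient group, depending only on the embedding $G\hookrightarrow \mathscr{G}^*$ and the formalism of integral lattices from [\cite{Loeffler_2021} \S $3$].
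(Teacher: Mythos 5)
Your approach matches the paper's own, which simply invokes [\cite{Loeffler_2021} Proposition $9.5.2$] and observes that the argument carries over (also to non-trivial coefficients); you are just unpacking that same citation, reducing to elementary generators of the integral lattice and identifying the cancellation of the volume normalization against the covering degree as the key point, which is exactly what \textit{loc.\,cit.}\ establishes. Your remark that the relevant map $Y_{\mathscr{G}^*}(\mathscr{H}_S[n])\to Y_{\mathscr{G}^*}(\mathscr{H}_S)\times_{\mathbf{Q}}\mathbf{Q}(\mu_n)$ is finite \'etale can be sharpened: the paper notes it is in fact an isomorphism by [\cite{loeffler2021euler} Proposition $5.4.2$], so integrality is trivially preserved at that step.
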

\begin{proof}
    The argument is that of \cite[Proposition $9.5.2$]{Loeffler_2021} and also works for non-trivial coefficients.
\end{proof}
    As remarked in \cite{Loeffler_2021}, this global cohomological integrality of the above proposition is what inspired the definition of the local integral lattices, with determinant level. As promised, we can now give even more depth to the integral essence of these classes. We show the local factors appearing in tame norm relations in motivic cohomology, satisfy the expected optimal integral behavior as conjectured by Loeffler. 
    \begin{thm}\label{thm euler system}
        Let $c\in\mathbf{Z}_{>1}$ coprime to $6S$ and let $\underline{\delta}^*=(\delta_p^*)_{p\notin S}$ be any collection of local integral data with determinant level $\delta_p^*\in\mathcal{I}_0(\mathscr{G}^*(\mathbf{Q}_p)/\mathscr{G}^*(\mathbf{Z}_p)[p],\mathbf{Z}[1/p])$. Let $\mathscr{Z}_{cS}$ be the set of square-free positive integers co-prime to $cS$. Then the corresponding collection of motivic \emph{(}integral\emph{)} Asai-Flach classes
        $$_c\mathrm{AF}_{\mathrm{mot},n}(\underline{\delta}^*)\in H_{\mathrm{mot}}^3(Y_{\mathscr{G}^*}(\mathscr{H}_S)\times_\mathbf{Q} \mathbf{Q}(\mu_n),\mathbf{Q}(2)),\ \ \ n\in\mathscr{Z}_{cS}$$
        satisfies the following motivic norm relations:
        \begin{enumerate}
            \item For $n,m\in\mathscr{Z}_{cS}$ with $\tfrac{m}{n}=p$ prime, we have 
            $$\mathrm{norm}^{\mathbf{Q}(\mu_m)}_{\mathbf{Q}(\mu_n)}\left( _c\mathrm{AF}_{\mathrm{mot},m}(\underline{\delta}^*)\right)=\mathcal{P}_{\mathrm{Tr}(\delta_p^*)}^\mathrm{cycl}\cdot\ _c\mathrm{AF}_{\mathrm{mot},n}(\underline{\delta}^*)$$
            with $$\mathcal{P}_{\mathrm{Tr}(\delta_p^*)}^\mathrm{cycl}\in \left\langle p-1,\mathcal{P}_{p,\mathrm{As}_*}^{'}(\mathrm{Frob}_p^{-1})\right\rangle\subseteq \mathcal{H}_{\mathscr{G}^*(\mathbf{Q}_p)}^\circ(\mathbf{Z}[1/p])[\mathrm{Gal}(\mathbf{Q}(\mu_n)/\mathbf{Q})].$$
            \item Let $\nu_p:=p(p-1)^2(p+1)$ and $\phi_{p,2}:=\nu_p\cdot\ch(p^2\mathbf{Z}_p\times (1+p^2\mathbf{Z}_p))\in\mathcal{S}_0(\mathbf{Q}_p^2)$. Finally, set $n_x:=\left[\begin{smallmatrix}
                1 & x \\
                 & 1
            \end{smallmatrix}\right]$. If we specialize the integral collection $\underline{\delta}^*$ to $\underline{\delta}_1^*=(\delta_{1,p}^*)_{p\notin S}$ where 
            $$\delta_{1,p}^*:=\begin{dcases}
                \phi_{p,2}\otimes\left[\ch\left(\mathscr{G}^*(\mathbf{Z}_p)[p]\right)-\ch\left((1,n_{1/p})\ \mathscr{G}^*(\mathbf{Z}_p)[p]\right) \right],\ &\mathrm{if}\ p\ \mathrm{splits}\\
                \phi_{p,2}\otimes\left[\ch\left(\mathscr{G}^*(\mathbf{Z}_p)[p]\right)-\ch\left(n_{\alpha_p/p}\ \mathscr{G}^*(\mathbf{Z}_p)[p]\right) \right],\ &\mathrm{if}\ p\ \mathrm{is}\ \mathrm{inert}
            \end{dcases}$$
            then for all $n,m\in\mathscr{Z}_{cS}$ with $\tfrac{m}{n}=p$ prime, we have $$\mathrm{norm}^{\mathbf{Q}(\mu_m)}_{\mathbf{Q}(\mu_n)}\left( _c\mathrm{AF}_{\mathrm{mot},m}(\underline{\delta}_1^*)\right)=\mathcal{P}_{p,\mathrm{As}_*}^{'}(\mathrm{Frob}_p^{-1})\cdot\ _c\mathrm{AF}_{\mathrm{mot},n}(\underline{\delta}_1^*)$$
        \end{enumerate}
        \begin{proof}
            We firstly prove the first part. Using the same argument as in the proof of \cite[ Theorem $5.2.7$]{groutides2024rankinselbergintegralstructureseuler}, which closely follows the proof of \cite[Theorem $9.4.3$]{Loeffler_2021}, together with the intertwining properties of the pushforward along $Y_{\mathscr{G}^*}(\mathscr{H}_S[n])\hookrightarrow Y_{\mathscr{G}^*}(\mathscr{H}_S)\times_\mathbf{Q}\mathbf{Q}(\mu_n)$, we can reduce this to the following local problem: We need to show that for every local component $\delta_p^*$ as in the statement of the theorem, the corresponding Hecke operator $\mathcal{P}_{\mathrm{Tr}(\delta_p^*)}$ (of \Cref{def freeness*}) lies in the ideal of $\mathcal{H}_{\mathscr{G}^*(\mathbf{Q}_p)}^\circ(\mathbf{Z}[1/p])$ given by $(p-1,\mathcal{P}_{p,\mathrm{As}_*}^{'}(1))$. 
            
            \noindent We have the following commutative diagram: 
     \[\begin{tikzcd}[ampersand replacement=\&,cramped,sep=scriptsize]
	{\mathcal{I}_0(\mathscr{G}^*(\mathbf{Q}_p)/\mathscr{G}^*(\mathbf{Z}_p)[p],\mathbf{Z}[1/p])} \& {\mathcal{I}_0(\mathscr{G}^*(\mathbf{Q}_p)/\mathscr{G}^*(\mathbf{Z}_p),\mathbf{Z}[1/p])} \& {\mathcal{I}(\mathscr{G}^*(\mathbf{Q}_p)/\mathscr{G}^*(\mathbf{Z}_p))} \& {\mathcal{H}_{\mathscr{G}^*(\mathbf{Q}_p)}^\circ} \\
	{\mathcal{I}_0(\mathscr{G}(\mathbf{Q}_p)/\mathscr{G}(\mathbf{Z}_p)[p],\mathbf{Z}[1/p])} \& {\mathcal{I}_0(\mathscr{G}(\mathbf{Q}_p)/\mathscr{G}(\mathbf{Z}_p),\mathbf{Z}[1/p])} \& {\mathcal{I}(\mathscr{G}(\mathbf{Q}_p)/\mathscr{G}(\mathbf{Z}_p))} \& {\mathcal{H}_{\mathscr{G}(\mathbf{Q}_p)}^\circ}
	\arrow["{\mathrm{Tr}}", from=1-1, to=1-2]
	\arrow["{i_p[p]}", from=1-1, to=2-1]
	\arrow[hook', from=1-2, to=1-3]
	\arrow["{i_p}", hook, from=1-2, to=2-2]
	\arrow["\simeq", from=1-3, to=1-4]
	\arrow["{i_p}", hook, from=1-3, to=2-3]
	\arrow["{\iota_p}", hook, from=1-4, to=2-4]
	\arrow["{\mathrm{Tr}}", from=2-1, to=2-2]
	\arrow[hook, from=2-2, to=2-3]
	\arrow["\simeq", from=2-3, to=2-4]
\end{tikzcd}\]
where the trace maps $\mathrm{Tr}$ are given by $\phi\otimes \ch(g\mathscr{G}^*(\mathbf{Z}_p)[p])\mapsto \phi\otimes \ch(g\mathscr{G}^*(\mathbf{Z}_p))$, and similarly for $\mathscr{G}$ instead of $\mathscr{G}^*$. The vertical maps $i_p$ , are given by $\phi\otimes \ch(g\mathscr{G}^*(\mathbf{Z}_p))\mapsto \phi\otimes \ch(g\mathscr{G}(\mathbf{Z}_p))$ and similarly for $i_p[p]$. Notice how this gives well-defined maps at both integral levels since the subgroup $\mathrm{Stab}_{\GL_2(\mathbf{Q}_p)}(\phi)\cap g \mathscr{G}^*(\mathbf{Z}_p)g^{-1}$ coincides with $\mathrm{Stab}_{\GL_2(\mathbf{Q}_p)}(\phi)\cap g \mathscr{G}(\mathbf{Z}_p)g^{-1}$ and similarly for the determinant levels. Let $\delta_p:=\left(i_p[p]\right)(\delta_p^*)$. It follows from the diagram that 
$$\iota_p\left(\mathcal{P}_{\mathrm{Tr}(\delta_p^*)}\right)=\mathcal{P}_{\mathrm{Tr}(\delta_p)}.$$
However, since the element $\delta_p$ is contained in $\mathcal{I}_0(\mathscr{G}(\mathbf{Q}_p)/\mathscr{G}(\mathbf{Z}_p)[p],\mathbf{Z}[1/p])$, we have already studied the integral behavior of the Hecke operator $\mathcal{P}_{\mathrm{Tr}(\delta_p)}$ in detail. Indeed, by \cite[Theorem $5.1.1$]{groutides2024rankinselbergintegralstructureseuler} if $p$ splits, and by \Cref{thm integral local factors} if $p$ is inert, we see that $\mathcal{P}_{\mathrm{Tr}(\delta_p)}$ and hence $\iota_p\left(\mathcal{P}_{\mathrm{Tr}(\delta_p^*)}\right)$ is contained in the ideal of $\mathcal{H}_{\mathscr{G}(\mathbf{Q}_p)}^\circ(\mathbf{Z}[1/p])$ generated by $p-1$ and $\iota_p(\mathcal{P}_{p,\mathrm{As}_*}^{'}(1))$. In the inert case, this is enough to deduce that $\mathcal{P}_{\mathrm{Tr}(\delta_p^*)}$ is indeed contained in $(p-1,\mathcal{P}_{p,\mathrm{As}_*}^{'}(1))\subseteq\mathcal{H}_{\mathscr{G}^*(\mathbf{Q}_p)}^\circ(\mathbf{Z}[1/p])$ since in this case, $\iota_p$ is also surjective. \textcolor{Black}{This is because for inert $p$, the groups $\mathscr{G}(\mathbf{Q}_p)$ and $\mathscr{G}^*(\mathbf{Q}_p)$ have exactly the same coset representatives $t(n_1,n_2), n_1\geq n_2$, in their respective Cartan decompositions.} 

In the split case, things are once again a bit more delicate. For a split prime $p$, the element $\delta_p^*$ is an integral linear combination of integral elements of the form $\phi\otimes \ch(g\mathscr{G}^*(\mathbf{Z}_p)[p])$ with $g\in\mathscr{G}^*(\mathbf{Q}_p)=G(\mathbf{Q}_p)\times_{\mathrm{Q}_p^\times}G(\mathbf{Q}_p)$. Thus, $\delta_p$ is an integral linear combination of integral elements $\phi\otimes \ch(g\mathscr{G}(\mathbf{Z}_p)[p])$ where $g$ is still an element of $\mathscr{G}^*(\mathbf{Q}_p)$. In the notation of \cite[\S $4.3$]{groutides2024rankinselbergintegralstructureseuler}, the image of $\mathrm{Tr}(\delta_p)$, under $\Xi_c$, in $C_c^\infty(P(\mathbf{Q}_p)\backslash\mathscr{G}(\mathbf{Q}_p)/\mathscr{G}(\mathbf{Z}_p))$ is given by an integral linear combination of characteristic functions $f_{z_0,\gamma_0}:=\ch(P(\mathbf{Q}_p)z_0(\gamma_0,1)\mathscr{G}(\mathbf{Z}_p))$ where $$z_0=\left[\begin{smallmatrix}
    p^{r_0} & \\
     & p^{r_0}
\end{smallmatrix}\right]\in Z(\mathbf{Q}_p), \gamma_0=\left[\begin{smallmatrix}
    p^{r_1} & \\
     & p^{r_1}
\end{smallmatrix}\right]\left[\begin{smallmatrix}
    p^m & y \\
    & 1
\end{smallmatrix}\right]\in \GL_2(\mathbf{Q}_p).$$
However, since now $g\in\mathscr{G}^*(\mathbf{Q}_p)$, we will have $2r_1+m=0$. In addition, $\mathcal{P}_{\mathrm{Tr}(\delta_p)}^{'}$ will be an integral linear combination of the Hecke operators $\mathscr{V}_{z_0,\gamma_0}[\mathscr{P}_{\gamma_0}+\mathscr{D}_{\gamma_0}\mathcal{P}_p(1)]$ appearing in \cite[Proposition $4.3.6$]{groutides2024rankinselbergintegralstructureseuler} (recall that for split $p$, we have $\iota_p(\mathcal{P}_{p,\mathrm{As}_*}(1))=\mathcal{P}_p(1)$). Using the fact that $2r_1+m=0$ and the explicit expressions in \textit{loc.cit}, a simple counting argument shows that $\mathscr{V}_{z_0,\gamma_0}\mathscr{P}_{\gamma_0}$ and $\mathscr{V}_{z_0,\gamma_0}\mathscr{D}_{\gamma_0}$ both lie in the image of $\iota_p$. Thus, we conclude the proof of the first part of the theorem.

\noindent For the second part of the theorem, we let $\delta_{1,p}:=(i_p[p])(\delta_{1,p}^*)$. Thus
$$\mathrm{Tr}(\delta_{1,p})=\begin{dcases}
                \phi_{p,2}\otimes\left[\ch\left(\mathscr{G}(\mathbf{Z}_p)[p]\right)-\ch\left((1,n_{1/p})\ \mathscr{G}(\mathbf{Z}_p)[p]\right) \right],\ &\mathrm{if}\ p\ \mathrm{splits}\\
                \phi_{p,2}\otimes\left[\ch\left(\mathscr{G}(\mathbf{Z}_p)[p]\right)-\ch\left(n_{\alpha_p/p}\ \mathscr{G}(\mathbf{Z}_p)[p]\right) \right],\ &\mathrm{if}\ p\ \mathrm{is}\ \mathrm{inert}\end{dcases}
    $$
    in the case where $\tfrac{m}{n}=p$ is split, this is covered in \cite[Theorem $5.2.7(3)$]{groutides2024rankinselbergintegralstructureseuler} and in this case, we have $\mathcal{P}_{\mathrm{Tr}(\delta_{1,p})}=\mathcal{P}_p^{'}(1)=\iota_p(\mathcal{P}_{p,\mathrm{As}_*}^{'}(1))$ as required. Now assume that $\tfrac{m}{n}=p$ is inert. We once again want to realize $\mathcal{P}_{\mathrm{Tr}(\delta_{1,p})}$ as $\iota_p(\mathcal{P}_{p,\mathrm{As}_*}^{'}(1))=\mathcal{P}_{E_p}^{'}(1)$. Using the approach in the proof of \cref{prop integral period 1} (i.e. essentially \cite[Proposition $4.4.1$]{Loeffler_2021}), it is convenient to do this using the Asai period $\mathcal{Z}$ of \Cref{def periods}. Let $F:=E_p$ and $\alpha:=\alpha_p$. By the usual density argument, it suffices to show that for any unramified principal series $\Pi_{F}=\mathcal{W}(\Pi_F,\psi_F)$ of $G(F)=\mathscr{G}(\mathbf{Q}_p)$, we have
    \begin{align}\label{eq: 28}A(s):=Z\left(\phi_{p,2},W_{\Pi_{F}}^\mathrm{sph}-n_{\alpha/p}W_{\Pi_{F}}^\mathrm{sph},s\right)=1.\end{align}
    where $Z$ is the local Asai zeta integral of \Cref{def Asai zeta integral}. We give some of the details here. As in \cite[\S $4$]{flicker1988twisted}, we can unfold the integral in \eqref{eq: 28} as follows:
    \begin{align}
        \nonumber A(s)=\int_{G(\mathbf{Z}_p)}&\left\{\int_{\mathbf{Q}_p^\times}\left[W_{\Pi_F}^\mathrm{sph}(\left[\begin{smallmatrix}
            y & \\
            & 1
        \end{smallmatrix}\right])-W_{\Pi_F}^\mathrm{sph}(\left[\begin{smallmatrix}
            y & \\
            & 1
        \end{smallmatrix}\right]k\ n_{\alpha/p})\right]|y|_p^{s-1}\ d^\times y\right\}  \\
       \label{eq:30}&\times \left\{\int_{\mathbf{Q}_p^\times}\omega_{\Pi_F}(x)\phi_{p,2}((0,x)k)|x|_p^{2s}\ d^\times x\right\}\ dk.
    \end{align}
    One cheks that the Godement-Siegel section appearing in \eqref{eq:30}, regarded as a function on $G(\mathbf{Z}_p)$, is supported on $K_0(p^2):=\{g\in G(\mathbf{Z}_p)\ |\ g\in\left[\begin{smallmatrix}
        * & * \\
        p^2\mathbf{Z}_p & *
    \end{smallmatrix}\right]\}$ where it takes the constant value $\tfrac{\nu_p}{p(p-1)}$. Thus
    \begin{align*}
        A(s)&=\tfrac{\nu_p}{p(p-1)}\int_{K_0(p^2)}\int_{\mathbf{Q}_p^\times}\left[W_{\Pi_F}^\mathrm{sph}(\left[\begin{smallmatrix}
            y & \\
            & 1
        \end{smallmatrix}\right])-W_{\Pi_F}^\mathrm{sph}(\left[\begin{smallmatrix}
            y & \\
            & 1
        \end{smallmatrix}\right]k\ n_{\alpha/p})\right]|y|_p^{s-1}\ d^\times y.
    \end{align*}
    We set $$K_{0,1}^1(p^2):=\left\{g\in G(\mathbf{Z}_p)\ |\ g\in\left[\begin{smallmatrix}
        1+p^2\mathbf{Z}_p & * \\
        p^2\mathbf{Z}_p & 1+p^2\mathbf{Z}_p
    \end{smallmatrix}\right]\right\}\subseteq K_0(p^2).$$
    We have that $n_{\alpha/p}^{-1}\cdot K_{0,1}^1(p^2)\cdot  n_{\alpha/p}\subseteq G(\mathcal{O}_F)$ and $\{\left[\begin{smallmatrix}
        i & \\
        & j
    \end{smallmatrix}\right]\ |\ i,j\in(\mathbf{Z}/p^2\mathbf{Z})^\times\}$ is a complete set of distinct coset representatives for $K_0(p^2)/K_{0,1}^1(p^2)$. Hence after a bit of rearranging, we obtain
    \begin{align*}
       A(s)&=\nu_p\frac{\vol_{G(\mathbf{Q}_p)}(K_{0,1}^1(p^2))}{p(p-1)}\sum_{i,j\in(\mathbf{Z}/p^2\mathbf{Z})^\times}\int_{\mathbf{Q}_p^\times}\left[W_{\Pi_F}^\mathrm{sph}(\left[\begin{smallmatrix}
            y & \\
            & 1
        \end{smallmatrix}\right])-W_{\Pi_F}^\mathrm{sph}\left(\left[\begin{smallmatrix}
            y & \\
            & 1
        \end{smallmatrix}\right]\left[\begin{smallmatrix}
            1 & \tfrac{i\alpha}{jp}\\
            & 1
        \end{smallmatrix}\right]\right)\right]|y|_p^{s-1}\ d^\times y\\
        &=\nu_p\frac{\vol_{G(\mathbf{Q}_p)}(K_{0,1}^1(p^2))}{p(p-1)}\sum_{i,j\in(\mathbf{Z}/p^2\mathbf{Z})^\times}\int_{\mathbf{Z}_p^\times} 1-\psi_F(\alpha\ i j ^{-1} p^{-1} y)\ d^\times y\\
        &=\nu_p\frac{\vol_{G(\mathbf{Q}_p)}(K_{0,1}^1(p^2))}{p(p-1)}\sum_{i,j\in(\mathbf{Z}/p^2\mathbf{Z})^\times}\tfrac{p}{p-1}\\
        &=\nu_pp^2\vol_{G(\mathbf{Q}_p)}(K_{0,1}^1(p^2)).
    \end{align*}
    where the second and third equality essentially follows from the fact that $\psi_F(\alpha(-))$, as a character of $\mathbf{Q}_p$, has conductor $\mathbf{Z}_p$, and the spherical Whittaker functions vanish on $\left[\begin{smallmatrix}
        y & \\
        & 1
    \end{smallmatrix}\right]$ for $y\notin \mathbf{Z}_p$. Finally, one can directly compute the volume factor, which turns to be precisely equal to $\tfrac{1}{p^2\nu_p}$ giving us the result. It is also quite straightforward to see that $\vol_{G(\mathbf{Q}_p)}(K_{0,1}^1(p^2))$, coincides with $\tfrac{1}{p}\cdot\vol_{G(\mathbf{Q}_p)}(\mathrm{Stab}_{G(\mathbf{Q}_p)}(\phi_{p,2})\cap n_{\alpha/p}\mathscr{G}^*(\mathbf{Z}_p)[p]n_{\alpha/p}^{-1})$ if $p$ is inert, and $\tfrac{1}{p}\cdot\vol_{G(\mathbf{Q}_p)}(\mathrm{Stab}_{G(\mathbf{Q}_p)}(\phi_{p,2})\cap (1,n_{1/p})\mathscr{G}^*(\mathbf{Z}_p)[p](1,n_{1/p})^{-1})$ if $p$ is split. Thus, the volume calculation also shows that indeed, $\delta_{p,1}^*\in\mathcal{I}_0(\mathscr{G}^*(\mathbf{Q}_p)/\mathscr{G}^*(\mathbf{Z}_p)[p],\mathbf{Z}[1/p])$ for every $p\notin S$, as claimed. 
        \end{proof}
    \end{thm}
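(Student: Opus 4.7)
The plan is to reduce both parts of the theorem to purely local Hecke-algebra statements. Using the isomorphism $Y_{\mathscr{G}^*}(\mathscr{H}_S[n]) \simeq Y_{\mathscr{G}^*}(\mathscr{H}_S) \times_{\mathbf{Q}} \mathbf{Q}(\mu_n)$ from [\cite{loeffler2021euler} Proposition $5.4.2$] and the intertwining between the Hecke action on the $z$-classes and the combined Hecke-Frobenius action on the $\mathrm{AF}$-classes, the argument of [\cite{groutides2024rankinselbergintegralstructureseuler} Theorem $5.2.7$], modeled on [\cite{Loeffler_2021} Theorem $9.4.3$], transplants directly. In this way $(1)$ reduces to showing
$$\mathcal{P}_{\mathrm{Tr}(\delta_p^*)} \in \left\langle p-1,\ \mathcal{P}_{p,\mathrm{As}_*}^{'}(1)\right\rangle \subseteq \mathcal{H}_{\mathscr{G}^*(\mathbf{Q}_p)}^\circ(\mathbf{Z}[1/p])$$
for every local input $\delta_p^*$ at an unramified prime $p$, while $(2)$ reduces to $\mathcal{P}_{\mathrm{Tr}(\delta_{1,p}^*)} = \mathcal{P}_{p,\mathrm{As}_*}^{'}(1)$ at every such $p$; the arithmetic Frobenius $\mathrm{Frob}_p$ re-enters only after pushforward to the cyclotomic Shimura variety.

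For part $(1)$, the strategy is to transfer the problem to the larger group $\mathscr{G}$ via the commutative square relating the $\mathscr{G}^*$- and $\mathscr{G}$-lattices at both full and determinant level, through the injective maps $i_p,\ i_p[p]$ and the Hecke-algebra embedding $\iota_p$. Setting $\delta_p := i_p[p](\delta_p^*)$, one obtains the key identity $\iota_p\bigl(\mathcal{P}_{\mathrm{Tr}(\delta_p^*)}\bigr) = \mathcal{P}_{\mathrm{Tr}(\delta_p)}$, reducing everything to the integral behavior of Hecke operators already controlled by \Cref{thm integral local factors}$(2)$ for inert $p$, and by [\cite{groutides2024rankinselbergintegralstructureseuler} Theorem $5.1.1$] for split $p$. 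In the inert case $\iota_p$ is surjective, so the containment descends immediately. In the split case, $\iota_p$ is only injective, and one must additionally exploit the constraint $2r_1 + m = 0$ arising from $g \in \mathscr{G}^*(\mathbf{Q}_p)$ (as opposed to $\mathscr{G}(\mathbf{Q}_p)$) on the parameters of the double-coset decompositions, together with the explicit description in [\cite{groutides2024rankinselbergintegralstructureseuler} Proposition $4.3.6$], to verify by a counting argument that the relevant terms $\mathscr{V}_{z_0,\gamma_0}\mathscr{P}_{\gamma_0}$ and $\mathscr{V}_{z_0,\gamma_0}\mathscr{D}_{\gamma_0}$ both lie in the image of $\iota_p$.

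For part $(2)$, the split case follows directly from [\cite{groutides2024rankinselbergintegralstructureseuler} Theorem $5.2.7(3)$]. For the inert case, the plan is to invoke the standard density argument on unramified Whittaker-type principal series $\Pi_F$ of $G(E_p)$, combined with the Asai period $\mathcal{Z}$ of \Cref{def periods} and the polynomial $\mathcal{P}_{F,\mathrm{As}}(X)$ of \Cref{lem Asai Euler factor}, to reduce $\mathcal{P}_{\mathrm{Tr}(\delta_{1,p}^*)} = \mathcal{P}_{p,\mathrm{As}_*}^{'}(1)$ to the explicit zeta integral identity
$$Z\bigl(\phi_{p,2},\ W_{\Pi_F}^{\mathrm{sph}} - n_{\alpha/p} W_{\Pi_F}^{\mathrm{sph}},\ s\bigr) = 1$$
for every such $\Pi_F$. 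This is carried out by unfolding along the lines of [\cite{flicker1988twisted} \S $4$], identifying the Godement--Siegel section $k \mapsto \phi_{p,2}((0,\cdot)k)$ on $G(\mathbf{Z}_p)$ as the constant function $\tfrac{\nu_p}{p(p-1)}$ supported on $K_0(p^2)$, partitioning into cosets of the deeper congruence subgroup $K_{0,1}^1(p^2) \subseteq K_0(p^2)$, and collapsing the inner sum using the fact that $\psi_F(\alpha\, \cdot)$ restricts to a character of $\mathbf{Q}_p$ of conductor $\mathbf{Z}_p$. A direct volume computation $\vol_{G(\mathbf{Q}_p)}(K_{0,1}^1(p^2)) = \tfrac{1}{p^2 \nu_p}$ then produces the final scalar $1$.

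The main obstacle is the split case of part $(1)$: since $\iota_p$ fails to be surjective for split $p$, pulling the integrality ideal back from $\mathcal{H}_{\mathscr{G}(\mathbf{Q}_p)}^\circ$ to $\mathcal{H}_{\mathscr{G}^*(\mathbf{Q}_p)}^\circ$ is not automatic and demands a careful term-by-term analysis of the explicit Hecke-algebra formulas of [\cite{groutides2024rankinselbergintegralstructureseuler} Proposition $4.3.6$], leveraging the $\mathscr{G}^*$-constraint on the coset parameters. A secondary but genuinely delicate technical point is the volume calculation in part $(2)$: not only does it pin the final value of the Asai zeta integral to $1$, but it simultaneously verifies that $\delta_{1,p}^*$ lies in the determinant-level integral lattice $\mathcal{I}_0(\mathscr{G}^*(\mathbf{Q}_p)/\mathscr{G}^*(\mathbf{Z}_p)[p],\mathbf{Z}[1/p])$, thereby justifying the normalizing factor $\nu_p = p(p-1)^2(p+1)$ built into $\phi_{p,2}$.
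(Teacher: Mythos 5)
Your proposal matches the paper's own proof essentially step for step: the global-to-local reduction via the Shimura variety isomorphism and intertwining, the commutative diagram relating $\mathscr{G}^*$ and $\mathscr{G}$ lattices through $i_p$, $i_p[p]$, $\iota_p$, the surjectivity of $\iota_p$ for inert $p$ versus the $2r_1+m=0$ counting argument for split $p$, and the zeta-integral unfolding with the $K_0(p^2)/K_{0,1}^1(p^2)$ coset decomposition and volume computation for part (2). No meaningful divergence from the paper's argument.
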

\begin{rem}
     One of the things evident from the integral computations above in the nice case of $\delta_{1,p}$, is that it's futile to try and attack the generality of the first part of \Cref{thm euler system}, or \Cref{prop integral period 1}, by directly working with the local Asai period $\mathcal{Z}$. Although at first glance it seems true, one can not hope to directly obtain any meaningful results at an integral level for arbitrary integral input data $\phi\otimes gW_{\Pi_F}^\mathrm{sph}$, by direct computations and unfolding. That is why the alternative Hecke-theoretic strategy to overcome this, which was introduced in \cite{groutides2024rankinselbergintegralstructureseuler} and further adapted here, is necessary. 
\end{rem}

 \bibliography{citation} 
\bibliographystyle{alpha}

\noindent\textit{Mathematics Institute, Zeeman Building, University of Warwick, Coventry CV4 7AL,
England}.\\
\textit{Email address}: Alexandros.Groutides@warwick.ac.uk
\end{document}